\newcommand{\Dima}[1]{{#1}}
\providecommand{\U}[1]{\protect\rule{.1in}{.1in}}
\newtheorem*{theorem*}{Theorem}
\newtheorem*{lemma*}{Lemma}
\newtheorem{lemma}[subsubsection]{Lemma}
\newtheorem{proposition}[subsubsection]{Proposition}
\newtheorem{remark}[subsubsection]{Remark}
\newtheorem{theorem}[subsubsection]{Theorem}
\newtheorem{definition}[subsubsection]{Definition}
\newtheorem{conjecture}{Conjecture}
\newtheorem*{conjecture*}{Conjecture}
\newtheorem*{remark*}{Remark}
\newtheorem{thm}[subsubsection]{Theorem}
\newtheorem{prop}[subsubsection]{Proposition}
\newtheorem{lem}[subsubsection]{Lemma}
\newtheorem{defn}[subsubsection]{Definition}
\newtheorem{cor}[subsubsection]{Corollary}
\newtheorem{rem}[subsubsection]{Remark}
\newtheorem{introtheorem}{Theorem}
\sloppy \theoremstyle{plain}
\newcommand{\R}{{\mathbb R}}
\newcommand{\C}{{\mathbb C}}
\newcommand{\proofend}{\hfill$\Box$\smallskip}
\newcommand{\rk}{{\operatorname{rk}}}
\newcommand{\alp}{{\alpha}}
\newcommand{\Fre}{{Fr\'{e}chet }}
\newcommand{\g}{{\mathfrak{g}}}
\newcommand{\fk}{{\mathfrak{k}}}
\newcommand{\fs}{{\mathfrak{s}}}
\newcommand{\cO}{{\mathcal{O}}}
\newcommand{\GL}{\operatorname{GL}}
\newcommand{\gl}{{\mathfrak{gl}}}
\newcommand{\n}{\mathfrak{n}}
\newcommand{\fp}{\mathfrak{p}}
\newcommand{\cV}{\mathcal{V}}
\begin{document}
\author{Dmitry Gourevitch}
\address{Dmitry Gourevitch, Faculty of Mathematics and Computer Science, Weizmann
Institute of Science, POB 26, Rehovot 76100, Israel }
\email{dimagur@weizmann.ac.il}
\urladdr{\url{http://www.wisdom.weizmann.ac.il/~dimagur}}
\author{Siddhartha Sahi}
\address{Siddhartha Sahi, Department of Mathematics, Rutgers University, Hill Center -
Busch Campus, 110 Frelinghuysen Road Piscataway, NJ 08854-8019, USA}
\email{sahi@math.rugers.edu}
\date{\today}
\title[Annihilator varieties, adduced representations, Whittaker functionals, and rank for
$\widehat{\mathrm{GL}(n)}$]{Annihilator varieties, adduced representations, Whittaker functionals, and rank for unitary
representations of $\mathrm{GL}(n)$}
\keywords{Annihilator, associated variety, Whittaker functional, Howe rank, unitary dual, BZ derivative, general linear group.
\indent 2010 MS Classification:  22E46, 22E47, 22E50 .}

\begin{abstract}
In this paper we study irreducible unitary representations of $GL_{n}\left(
\mathbb{R}\right)  $ and prove a number of results. Our first result
establishes a precise connection between the annihilator of a
representation and the existence of degenerate Whittaker functionals, for both
smooth and K-finite vectors, thereby generalizing results of Kostant, Matumoto
and others.

Our second result relates the annihilator
to the sequence of adduced representations, as defined in this setting by one of
the authors.
Based on those results, we suggest a new notion of rank of a smooth admissible
representation of $GL_{n}(\R)$, which for unitarizable representations refines
Howe's notion of rank.

Our third result computes the adduced representations for (almost)
all irreducible unitary representations in terms of the Vogan
classification.

We
also indicate briefly the analogous results over complex and $p$-adic fields.
\end{abstract}
\maketitle
\setcounter{tocdepth}{1}
\tableofcontents




\section{Introduction}

Many important problems in harmonic analysis require one to decompose a
unitary representation of a real reductive group $G$ on a
Hilbert space $\mathcal{H}$, e.g. $\mathcal{H}=L^{2}\left(  X\right)  $ for
some $G$-space $X$ equipped with an invariant measure. In order to solve such problems
one would like to know the irreducible unitary representations of $G$ as
explicitly as possible. The starting point is of course the determination of
the unitary dual $\widehat{G}$, but then it is helpful to have additional
knowledge about invariants of unitary representations, such as their
annihilator varieties, existence of Whittaker functionals, etc.

The unitary dual of $G_{n}=GL\left(  n,\mathbb{R}\right)  $ has been
determined by Vogan \cite{Vog-class} and in this paper we consider the
following invariants of $\pi\in\widehat{G_{n}}$, whose precise definitions are
given below in section \ref{invariants}:

\begin{enumerate}
\item the annihilator variety
$\mathcal{V}\left(  \pi\right)  \subset\mathfrak{gl}\left(  n,\mathbb{C}%
\right)^{\ast}$, 

\item the space $Wh_{\alpha
}^{\ast}\left(  \pi\right)  $ of (degenerate) Whittaker functionals of type
$\alpha$, 


\item the depth composition $DC\left(  \pi\right)  $ for iterated adduced representations of $\pi$, 

\item the Howe rank $HR\left(  \pi\right)  $ of $\pi$. 
\end{enumerate}

Our main theorem generalizes a number of existing results in the literature.

\begin{introtheorem}
\label{thm:Main}
Let $\pi\in\widehat{G_{n}}$ and let $\lambda$ be the partition of $n$ such that $\cV(\pi) = \overline{\cO_{\lambda}}$. Then

\begin{enumerate}

\item $Wh_{\lambda
}^{\ast}\left(  \pi\right)   \neq 0$.

\item $DC(\pi) = \lambda$.
\end{enumerate}
\end{introtheorem}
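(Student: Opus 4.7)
The plan is to deduce Theorem~A by combining the three structural results advertised in the introduction: \emph{(a)} the general equivalence between irreducible components of $\cV(\pi)$ and the existence of degenerate Whittaker functionals of the corresponding type; \emph{(b)} the general identification of the depth composition $DC(\pi)$ with the partition labelling $\cV(\pi)$; and \emph{(c)} an explicit computation, via Vogan's classification, of the iterated adduced representations of (almost) any $\pi \in \widehat{G_n}$. I note first that for $G_n = \GL(n,\R)$ the annihilator variety of an irreducible admissible representation is automatically the closure of a single nilpotent orbit, so that the partition $\lambda$ is unambiguously defined.

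For part (1), the upper bound $Wh_{\mu}^{\ast}(\pi) = 0$ for partitions $\mu \not\leq \lambda$ (in the closure order) is essentially a theorem of Matumoto. The substantive content is the matching nonvanishing $Wh_{\lambda}^{\ast}(\pi) \neq 0$, which I would prove first on the underlying Harish-Chandra module, using the interpretation of $\cV(\pi)$ as the associated variety of the annihilator ideal in $U(\g)$, and then transport the result to smooth vectors via the Casselman-Wallach globalisation. This is precisely what result \emph{(a)} asserts.

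For part (2), the strategy is to apply result \emph{(b)} to conclude $DC(\pi) = \lambda$. To carry this out for an arbitrary $\pi \in \widehat{G_n}$, the plan is to \emph{(i)} invoke Vogan's classification of $\widehat{G_n}$ via parabolic induction from tensor products of characters, Speh representations, and their complementary series; \emph{(ii)} establish the compatibility of both adduction and $\cV$ with such parabolic induction; \emph{(iii)} compute the adduced representations on each building block, which is the content of result \emph{(c)}; and \emph{(iv)} match the resulting partition with the one attached to $\cV(\pi)$ through the Barbasch-Vogan correspondence.

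The principal obstacle I anticipate is step \emph{(iii)}, namely the explicit computation of iterated adduced representations of complementary series of Speh representations. This requires an archimedean analogue of the Bernstein-Zelevinsky derivative whose leading term peels off the rows of the Young diagram of $\lambda$ one at a time. Once that computation is in place, the compatibility of adduction with parabolic induction, together with the compatibility of $\cV$ with the same operation, makes both parts of Theorem~A fall out simultaneously.
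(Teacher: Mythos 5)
Your proposal has two genuine gaps, and the second is a circularity.

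\textbf{Part (1).} You propose to first produce an algebraic functional in $Wh'_{\lambda}(\pi^{HC})$ from the identification of $\cV(\pi)$ with the associated variety of the annihilator, and then ``transport the result to smooth vectors via the Casselman--Wallach globalisation.'' Neither step is available. Matumoto's theorem gives only the implication $Wh'_{\alpha}\neq 0\Rightarrow \cO_{\alpha}\subset\cV(\pi)$; the converse nonvanishing for $\alpha=\lambda=AP(\pi)$ is precisely what has to be proved, not something that can be cited. Worse, even granting $Wh'_{\lambda}(\pi^{HC})\neq 0$, the only inclusion one has for free is $Wh^{\ast}_{\psi}(\pi)\subseteq Wh'_{\psi}(\pi)$ (restriction of a continuous functional to the dense subspace $\pi^{HC}$); the reverse passage from an algebraic $\mathfrak{n}$-equivariant functional to a \emph{continuous} one on $\pi^{\infty}$ is a hard open problem, which the authors defer to the work in progress \cite{GS}. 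The paper sidesteps this entirely: it writes $\pi|_{P_n}=Ind_{S_{n-d,d}}^{P_n}(A\pi\otimes\psi\otimes 1)$ (Lemma \ref{lem:A1Step}) and uses the \emph{continuous} evaluation map $f\mapsto f(e)$ on the unitarily induced model (Corollary \ref{cor:IndFunc}) to lift a continuous functional from $(A\pi)^{\infty}$ to $\pi^{\infty}$, so continuity is built in from the start.

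\textbf{Part (2).} Your step \emph{(iii)} — computing the adduced representations of all building blocks via the Vogan classification — cannot be an input to Theorem \ref{thm:Main}, because in the paper it is an \emph{output}: the computation for Speh complementary series is Conjecture \ref{conj:SpehCS}, and the partial proof given (Theorem \ref{thm:Apsi}) invokes Theorem \ref{thm:Main} to pin down the associated partition of $A\pi$; even then the case $k=m$ remains undetermined. The actual proof is an induction on $n$ that never needs to identify $A\pi$ in the classification. It combines: (a) the lifting $Wh^{\ast}_{\mu}(A\pi)\hookrightarrow Wh^{\ast}_{(d,\mu)}(\pi)$ above, which together with the induction hypothesis gives $Wh^{\ast}_{DC(\pi)}(\pi)\neq 0$ and hence, by Matumoto, $\cO_{DC(\pi)}\subset\cV(\pi)$; (b) the upper bound $\dim\cV(\pi)\leq\dim\cV(A\pi)+(2n-d)(d-1)$, proved via Gelfand--Kirillov dimension (Gabber--Joseph and Vogan's theorems applied to a module nondegenerately paired with $(\pi|_{P_n})^{\omega}$, plus the geometric lemma that projection to $\mathfrak{p}_n^{\ast}$ is generically injective on nilpotent orbits); and (c) the combinatorial Lemma \ref{lem:DimOrbFin}, which squeezes these two inequalities to force $\lambda=(d,\mu)=DC(\pi)$. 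You would need to supply substitutes for (b) and (c), which are the real content of the argument and are absent from your outline.
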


In particular, this implies that the depth composition is non-increasing.
By Matumoto's theorem (see Corollary \ref{cor:Mat} below) our result implies also that $\lambda$ is the biggest partition with $Wh_{\lambda
}^{\ast}\left(  \pi\right)   \neq 0$. By \cite[Theorems 0.1, 0.2, 4.3]{He}, $\lambda$ connects to Howe's notion of rank by $$HR\left(  \pi\right)  =\min\left(
\left\lfloor n/2\right\rfloor ,n-length(\lambda)\right) .$$ We will give an independent proof of this result (see Remark \ref{rem:SmallRep}).

Before giving the precise definitions of our invariants, we need to fix some
notation regarding partitions, compositions, nilpotent orbits, and parabolic subgroups.

\subsection{Notation} \label{subsec:not}


\begin{definition}
\label{part-comp}A \emph{composition} of $n$ of \emph{length }$k$
is a sequence ${\alpha}=({\alpha}_{1},...,{\alpha}_{k})$ of
natural numbers (i.e. strictly positive integers) such that
$\Sigma{\alpha}_{i}=n$; a \emph{partition }is a nondecreasing
composition. For a composition $\alpha$ we denote by
$\alpha^{\geq}/\alpha^{\leq}$ the nondecreasing/nonincreasing
reordering of
$\alpha$.
\end{definition}

\begin{remark}
The Young diagram of a partition is a left alligned array of boxes with
$\lambda_{i}$ boxes in row $i$. The rows of the diagram for $\lambda^{t}$ are
the columns of the diagram for $\lambda$.
\end{remark}

We sometimes use \textquotedblleft exponential\textquotedblright\ notation for
partitions; thus $4^{2}2^11^{3}$ denotes $\left(  4,4,2,1,1,1\right)  $.

We denote $\mathfrak{g}_{n}:=\mathfrak{gl}_{n}(\C)$.

\begin{definition}
\label{Jordan} If $\alpha$ is a composition of $n$, we define $J_{\alpha}%
\in\mathfrak{g}_{n}$ to be the Jordan matrix with diagonal Jordan blocks of
size ${\alpha}_{1},...,{\alpha}_{k}$; explicitly
\[
\left(  J_{\alpha}\right)  _{ij}=\left\{
\begin{array}
[c]{cc}%
1 & \text{if }j=i+1\text{ and }i\neq{\alpha}_{1}+...+\text{ }{\alpha}%
_{l}\text{ for any }l\\
0 & \text{else}%
\end{array}
\right.
\]
We define $\mathcal{O}_{\alpha}$ to be the orbit of $J_{\alpha}$ under the
adjoint action of $GL\left(  n,\mathbb{C}\right)  $ on $\mathfrak{g}_{n}$.
\end{definition}

If $\lambda=\alpha^{\geq}$ then we have $\mathcal{O}_{\alpha}=\mathcal{O}%
_{\lambda}$. Moreover by the theorem of Jordan canonical form, for each
nilpotent matrix $X$ in $\mathfrak{g}_{n}$ there is a unique partition
$\lambda$ such that $X\in\mathcal{O}_{\lambda}$.

Let $\lambda,\mu$ be partitions of $n$ and let $\overline{\mathcal{O}%
_{\lambda}}$ denote the Zariski closure of $\mathcal{O}_{\lambda}$ then we have%
\[
\mathcal{O}_{\mu}\subseteq\overline{\mathcal{O}_{\lambda}}\text{ if }%
\mu_{1}+\cdots\mu_{k} \leq \lambda_{1}+\cdots+\lambda_{k}\text{ for all
}k\text{;}%
\]
If $\lambda,\mu$ satisfy this condition, will simply write $\mu\subseteq
\lambda$.

\begin{remark}
\label{traceform} For $X,Y$ in $\mathfrak{g}_{n}$ the trace pairing is defined
to be%
\begin{equation}
\left\langle X,Y\right\rangle =trace\left(  XY\right)  \label{trace}%
\end{equation}
Then $\left\langle \cdot,\cdot\right\rangle $ is a nondegenerate symmetric
$GL\left(  n,\mathbb{C}\right)  $-invariant bilinear form (trace form) on
$\mathfrak{g}_{n}$. The trace form gives rise to an isomorphism $\mathfrak{g}%
_{n}\approx\mathfrak{g}_{n}^{\ast}$ that intertwines the adjoint and coadjoint
actions of $GL\left(  n,\mathbb{C}\right)  $. This allows us to
identify adjoint orbits and coadjoint orbits.
\end{remark}


We next fix our conventions regarding parabolic subgroups of $G_{n}$.

If $\alpha=({\alpha}_{1},...,{\alpha}_{k})$ is a composition of $n$ then we
define%
\[
S_{i}=S_{i}\left(  \alpha\right) :=\left\{  {\alpha}_{1}+\cdots+{\alpha
}_{i-1}+j\colon 1\leq j\leq\alpha_{i}\right\}
\]
For $g\in G_{n}$ we let $g_{\alpha}^{ij}$ denote the $\alpha_{i}\times
\alpha_{j}$ submatrix of $g$ with rows from $S_{i}$ and columns from $S_{j}$.

\begin{definition}
For a composition $\alpha$ we define subgroups $P_{\alpha},L_{\alpha
},N_{\alpha}$ of $G_{n}$ as follows%
\[
P_{\alpha}=\left\{  g\mid g_{\alpha}^{ij}=0\text{ if }i<j\right\}  ,L_{\alpha
}=\left\{  g\mid g_{\alpha}^{ij}=0\text{ if }i\neq j\right\}  ,N_{\alpha
}=\left\{  g\mid g_{\alpha}^{ij}=\delta^{ij}\text{ if }i\leq j\right\}  .
\]

Here $\delta^{ij}$ is Kronecker's $\delta$, while $0$ and $1$ denote zero and
identity matrices of appropriate size.
\end{definition}

Thus $B=P_{1^{n}}$ is the standard Borel subgroup of upper triangular
matrices, and each $P_{\alp}$ is a standard parabolic subgroup containing $B.$
$N_{\alpha}$ is the nilradical of $P_{\alp}$ and $P_{\alp}=L_{\alpha}N_{\alpha}$ is
a Levi decomposition with $L_{\alpha}\approx G_{{\alpha}_{1}}\times
\cdots\times G_{{\alpha}_{k}}.$

We now introduce the Bernstein-Zelevinsky product notation for parabolic induction.

\begin{definition}\label{def:BZProd}
If $\alpha=({\alpha}_{1},...,{\alpha}_{k})$ is a composition of $n$ and
$\pi_{i}\in\widehat{G_{a_{i}}}$ then $\pi_{1}\otimes\cdots\otimes\pi_{k}$ is
an irreducible unitary representation of $L_{\alpha}\approx G_{{\alpha}_{1}%
}\times\cdots\times G_{{\alpha}_{k}}$. We extend this to $P_{\alpha}$
trivially on $N_{\alpha}$ and define%
\[
\pi_{1}\times\cdots\times\pi_{k}=Ind_{P_{\alpha}}^{G_{n}}\left(  \pi
_{1}\otimes\cdots\otimes\pi_{k}\right),
\]
where $Ind$ denotes normalized induction  (see \S\S \ref{subsec:UnInd}).
\end{definition}

\begin{remark}
It follows from \cite{Vog-class} or from
(\cite{Sahi-Kirillov} and \cite{Bar}) that if $\pi_{i}\in\widehat{G_{a_{i}}}$
then $\pi_{1}\times\cdots\times\pi_{k}\in\widehat{G_{n}}$. In this case $\pi_{1}\times\cdots\times\pi_{k}$ is unchanged
under permutation of the $\pi_{i}.$
\end{remark}

\begin{remark}
Since $G_{n}/P_{\alpha}$ is compact, one can define $\pi_{1}\times\cdots
\times\pi_{k}$ analogously in the $C^{\infty}$ category.
We refer the reader to \cite{Vog-book} for details.
We will occasionally need
to consider this case especially in connection with complementary series
construction in the next section and elsewhere.
\end{remark}

\subsection{Invariants of unitary representations\label{invariants}}

For a representation $\pi$ of a Lie group $G$ in a Hilbert space, we denote by $\pi^{\infty}$ the space of smooth vectors and by $\pi^{\omega}$ the space of analytic vectors (see \S\S \ref{subsec:Analvec}). If $G$ is a real reductive group with maximal compact subgroup $K$ we also consider the Harish-Chandra module $\pi^{HC}$ consisting of $K$-finite vectors.

By \cite[Theorem 3.4.12]{Wal1} if $\pi$ is an irreducible unitary representation of a reductive group $G$ with Lie algebra $\mathfrak{g}$ and maximal compact subgroup $K$, then $\pi^{HC}$ is an irreducible $(\g,K)$-module and thus $\pi^{\infty}$ is a topologically irreducible representation of $G$.

\subsubsection{The annihilator variety and associated partition} \label{subsubsec:AnnVar}

For an associative algebra $A$ the annihilator of a module $\left(
\sigma,V\right)  $ is
\[
Ann(\sigma)=\left\{  a\in A \colon \sigma\left(  a\right)  v=0\text{ for all }v\in
V\right\}
\]
If $A$ is abelian then we define the annihilator variety of $\sigma$ to be the variety corresponding to the ideal $Ann(\sigma)$, i.e.
$\mathcal{V}\left(  \sigma\right)  =\text{Zeroes}\left(  Ann(\sigma)\right)  $.

If $\left(  \sigma,V\right)  $ is a module for a Lie algebra $\mathfrak{g}$,
then one can apply the above considerations to the enveloping algebra
$U\left(  \mathfrak{g}\right)  $. While $U\left(  \mathfrak{g}\right)  $ is
not abelian it admits a natural filtration such that $gr\left(  U\left(
\mathfrak{g}\right)  \right)  $ is the symmetric algebra $S\left(
\mathfrak{g}\right)  ,$ and one has a ``symbol" map $gr$ from $U\left(
\mathfrak{g}\right)  $ to $S\left(  \mathfrak{g}\right)  $. We let $gr\left(
Ann(\sigma)\right)  $ be the ideal in $S\left(  \mathfrak{g}\right)  $
generated by the symbols $\left\{  gr\left(  a\right)  \mid a\in Ann\left(
\sigma\right)  \right\}  $ and define the annihilator variety of $\sigma$ to
be
\[
\mathcal{V}\left(  \sigma\right)  =\text{Zeroes}\left(  gr\left(  Ann(\sigma)\right)
\right)  \subset\mathfrak{g}^{\ast}%
\]

If $\mathfrak{g}$ is a complex reductive Lie algebra and $M$ is an irreducible $\mathfrak{g}$-module, then it was shown
by Joseph (see \cite{Jos85}) that $\mathcal{V}(M)$ is
the closure $\overline{{\mathcal{O}}}$ of a single nilpotent coadjoint orbit
${\mathcal{O}}$.

If $\pi$ is a Hilbert space representation of a Lie group $G$ then we define $\cV(\pi):=\cV(\pi^{\omega})$.
If $G$ is reductive and $\pi$ is an admissible  (e.g. irreducible unitary) representation then $\pi^{HC} \subset \pi^{\omega} \subset \pi^{\infty}$, $\pi^{HC}$ is dense in $\pi^{\infty}$ and the action of $U(\g)$ is continuous. Thus, $\cV(\pi) = \cV(\pi^{HC})$.

If $\pi$ is an irreducible unitary representation then $\pi^{HC}$ is an irreducible $(\g,K)$-module and thus is a finite direct sum of algebraically irreducible representations of $\g$. These representations are $K$-conjugate and thus have the same annihilator variety. Thus \cite{Jos85} implies that $\cV(\pi)$ consists of a single nilpotent coadjoint orbit, that we call the \emph{associated orbit}.

\begin{defn}\label{def:AP}
If $\lambda$ is a partition of $n$ such that $\cV(\pi)=\overline{\cO_{\lambda}}$ we call $\lambda$ \emph{associated partition} and denote $\lambda=AP(\pi)$.
\end{defn}

For example, if $\pi$ is finite-dimensional then $\cV(\pi)=\{0\}$ and $AP(\pi)= 1^n$ and if $\pi$ is generic then, by a result of Kostant (see \S\S \ref{subsec:PrevRes}), $\cV(\pi)$ is the nilpotent cone of $\g_n^*$ and $AP(\pi)=n^1$.

For an admissible representation $\pi$ of a real reductive group
$G$ with Lie algebra $\g_{\R}$ and complexified Lie algebra $\g$ one can define more refined invariants such as

\begin{enumerate}
\item the asymptotic
support $AS\left(  \pi\right)  \subset\mathfrak{g}_{\R}^{\ast}$
\item  the wave
front set $WF\left(  \pi\right)  \subset\mathfrak{g}_{\R}^{\ast}$
(see e.g. \cite{SV}
\item the
associated variety $AV\left(  \pi\right)  \subset{\mathfrak{k}}^{\bot}
\subset\mathfrak{g}^{\ast}$ (see e.g. \cite{BV}). Here $\fk$ denotes the complexified Lie algebra of the maximal compact subgroup.
\end{enumerate}
 By \cite{Ros} and \cite{SV},
these three invariants determine each other and each of them determines
$\mathcal{V}\left(  \pi\right)  $. For $GL\left(  n,\mathbb{R}\right)  $ the
converse is true as well, and since we are primarily interested in this case
we will mostly ignore the refined invariants in this paper.

\subsubsection{Degenerate Whittaker functionals}

\label{subsec:genWhitFunc}

In this section we fix $n$, and write $N$ for $N_{\left(  1^{n}\right)  };$
thus $N$ is the subgroup of $G_{n}$ consisting of unipotent upper triangular
matrices. Let $\mathfrak{n}_{\R}$ be the Lie algebra of $N$ and let
$\mathfrak{n}$ be the complexification of $\mathfrak{n}_{\R}$.

Let $\Psi$ denote the set of multiplicative unitary characters of $N$. Then
$\Psi$ can be identified with a subset of $\mathfrak{n}^{\ast}$ via the
exponential map. More precisely we have%
\[
\Psi\approx\left\{  \psi\in\mathfrak{n}^{\ast}\mid\psi\left(  \left[
\mathfrak{n,n}\right]  \right)  =0\text{, }\psi\left(  \mathfrak{n}
_{\R}\right)  \subset i\mathbb{R}\right\}
\]
where an element $\psi$ of the right side is regarded as character of $N$ via
the formula%
\[
\psi\left(  \exp X\right)  =e^{\psi\left(  X\right)  }\text{ for }%
X\in\mathfrak{n}_{\R}.
\]
We will write $\mathbb{C}_{\psi}$ for the one-dimensional space regarded as a
module for $N$ or $\mathfrak{n}$ via $\psi$.
\begin{definition}
If $\pi\in\widehat{G_{n}}$ and $\psi\in\Psi$ we define
\[
Wh_{\psi}^{\prime}\left(  \pi\right)  =\mathrm{Hom}_{\mathfrak{n}}\left(
\pi^{HC},\mathbb{C}_{\psi}\right)  \text{, }Wh_{\psi}^{\ast}\left(
\pi\right)  =\operatorname{Hom}_{N}^{\text{cont}}(\pi^{\infty},\mathbb{C}%
_{\psi})
\]
where $\operatorname{Hom}_{N}^{\text{cont}}$ denotes the space of
\emph{continuous} $N$-homomorphisms.
\end{definition}

It is well known that $\pi^{HC}$ is dense in $\pi^{\infty}$, hence by
restriction we get an inclusion%
\begin{equation}
Wh_{\psi}^{\ast}\left(  \pi\right)  \subseteq Wh_{\psi}^{\prime}\left(
\pi\right)  \label{smhc}%
\end{equation}
Moreover $\pi^{HC}$ is finitely generated as an $\mathfrak{n}$-module and
hence we get
\[
\dim Wh_{\psi}^{\ast}\left(  \pi\right)  \leq\dim Wh_{\psi}^{\prime}\left(
\pi\right)  <\infty
\]

We refer to elements of $Wh_{\psi}^{\ast}\left(  \pi\right)  $ and $Wh_{\psi
}^{\prime}\left(  \pi\right)  $ as (degenerate) Whittaker functionals of type
$\psi$.

\begin{remark*}
Let $\mathfrak{\bar{n}}$ be the space of strictly lower triangular matrices.
Then the trace form  restricts to a nondegenerate pairing of
$\mathfrak{\bar{n}}$ with $\mathfrak{n}$, allowing one to identify
$\mathfrak{\bar{n}\approx n}^{\ast}$. Under this identification elements of
$\Psi$ correspond to imaginary ``subdiagonal" matrices, i.e \ to matrices
$X\in\mathfrak{g}_{n}$ satisfying%
\[
X_{pq}\in i\mathbb{R}\text{ if }p=q+1\text{ and }X_{pq}=0\text{ if }p\neq q+1
\]
\end{remark*}

Since we have identified $\mathfrak{g}_{n}^{\ast}\approx\mathfrak{g}_{n}$ via
the trace form, the above remark also allows us to regard $\mathfrak{n}^{\ast
}$ as a subspace of $\mathfrak{g}_{n}^{\ast}$. Hence we may also regard $\Psi$
as a subspace of $\mathfrak{g}_{n}^{\ast}$.

\begin{defn}
Let $\alpha$ be a composition of $n$, $J_{\alpha}$ be the corresponding Jordan matrix as in Definition \ref{Jordan}, and $w$ be the longest element of the Weyl group. Then $wJ_{\alpha}w^{-1}$ is subdiagonal and $iwJ_{\alpha}w^{-1}$ can be
regarded as an element of $\Psi$ by the above remark. We denote this character by $\psi_{\alpha}$.
Note that $\psi_{\alpha}\in{\mathcal{O}}_{\alpha}$.

For $\pi\in\widehat{G_{n}}$ denote also
\begin{equation}
Wh_{\alpha}^{\prime}\left(  \pi\right)  = Wh_{\psi_{\alpha}}^{\prime}\left(  \pi\right),  \quad Wh_{\alpha}^{\ast}\left(
\pi\right)  =  Wh_{\psi_{\alpha}}^{\ast}\left(
\pi\right)
\end{equation}
\end{defn}
%
\subsubsection{The adduced representation, \textquotedblleft derivatives \textquotedblright and the depth composition}

\label{subsec:DerDepth}

In \cite{BZ-Induced,Ber} Bernstein and Zelevinsky introduced the important notion
of \textquotedblleft derivative\textquotedblright\ for representations of
$GL\left(  n,\mathbb{Q}_{p}\right)  $. An Archimedean analog of the
\textquotedblleft highest\textquotedblright\ derivative for $\pi
\in\widehat{G_{n}}$ was defined in \cite{Sahi-Kirillov}, where it was called
the adduced representation and denoted $A\pi$. This definition which we now
recall, involves two ingredients.

Let $P_{n}\subset G_{n}$ be the \textquotedblleft mirabolic\textquotedblright%
\ subgroup consisting of matrices with last row $(0,0,...,0,1)$. To forestall
confusion we note that $P_{n}$ is \emph{not} a parabolic subgroup, it has
codimension $1$ in $P_{\left(  n-1,1\right)  }$ \ and is completely different
from $P_{({n})}=G_{n}$.

The first ingredient in the definition of $A\pi$ is the following result that
was conjectured by Kirillov.

\begin{thm}
\label{BarKir} Let $\pi\in\widehat{G_{n}}$, then $\pi|_{P_{n}}$ is irreducible.
\end{thm}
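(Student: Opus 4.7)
The plan is to follow the Gelfand--Kazhdan style approach developed by Bernstein in the $p$-adic setting and extended by Baruch to the Archimedean case. The first step is to prove a distributional invariance statement: every bi-$P_n$-invariant distribution on $G_n$ is also invariant under the transposition $\tau\colon g\mapsto g^{t}$. I would establish this by analyzing the double coset space $P_n \backslash G_n / P_n$, which has only finitely many orbits; on each open orbit one checks by direct calculation that $\tau$ acts trivially on $P_n$-invariant distributions, and then argues by induction on the closure order using a jet-wise version of Harish--Chandra descent together with a Luna slice argument. The Archimedean subtlety, absent in the $p$-adic case, is the need to control transverse derivatives of distributions rather than merely their values on each orbit.

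Next I would translate the invariance statement into a multiplicity-one bound: for any irreducible admissible Hilbert representation $\pi$ of $G_n$ and any irreducible unitary representation $\sigma$ of $P_n$, the space $\operatorname{Hom}_{P_n}^{\mathrm{cont}}(\pi^\infty, \sigma^\infty)$ is at most one-dimensional. This follows from a Schwartz kernel argument: continuous $P_n$-equivariant maps correspond to bi-$P_n$-invariant distributions on $G_n$ of appropriate growth, and combining transpose invariance with the standard Gelfand--Kazhdan dichotomy (comparing $\pi$ and its contragredient via $\tau$) bounds the dimension of the intertwining space by one.

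Finally, I would invoke Kirillov's description of $\widehat{P_n}$: writing $P_n = G_{n-1}\ltimes \mathbb{R}^{n-1}$ where $\mathbb{R}^{n-1}$ is the unipotent radical, every irreducible unitary representation of $P_n$ is obtained as $\operatorname{Ind}_{P_{n-1}\ltimes \mathbb{R}^{n-1}}^{P_n}(\sigma\otimes\psi)$ for some $\sigma\in\widehat{P_{n-1}}$ and a (necessarily generic) character $\psi$ of $\mathbb{R}^{n-1}$. By Mackey theory, any unitary representation of $P_n$ decomposes as a direct integral of such irreducibles, and the multiplicity bound above forces almost every summand in the direct integral decomposition of $\pi|_{P_n}$ to appear with multiplicity one. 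The main obstacle, and the most delicate step, is to upgrade this to genuine irreducibility, ruling out a nontrivial direct integral. I would handle this by induction on $n$: the infinitesimal and central characters of $\pi$ constrain the admissible Kirillov data $(\sigma,\psi)$ to a single $P_n$-conjugacy class, and the inductive hypothesis applied to $\sigma \in \widehat{P_{n-1}}$ pins the decomposition down to a single atom.
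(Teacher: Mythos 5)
You should first note that the paper does not prove this statement: it is Kirillov's conjecture, quoted from Bernstein ($p$-adic case), Sahi (complex case) and Baruch (real case), with newer proofs in \cite{AG,SZ,GaLa}; so your sketch must be measured against those proofs. Your proposal has the right family of techniques (invariant distributions plus the transpose anti-involution, in the spirit of Gelfand--Kazhdan, Bernstein and Baruch), but the key distributional statement is misidentified. What is actually needed is that every distribution on $G_n$ invariant under the \emph{conjugation} action of $P_n$ (in Baruch's formulation, every such distribution that is in addition an eigendistribution of $Z(U(\g_n))$; in \cite{AG,SZ}, every distribution on $G_n$ invariant under conjugation by $G_{n-1}$) is invariant under $g\mapsto g^t$, equivalently under conjugation by all of $G_n$. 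Distributions that are bi-invariant under left and right \emph{translation} by $P_n$ are a different object: they control $P_n$-invariant functionals on $\pi$, not the commutant of $\pi(P_n)$, and in any case your finiteness claim fails --- $P_n\backslash G_n/P_n$ already contains a continuum of closed orbits, and the conjugation action has a continuum of semisimple orbits, which is exactly why the Archimedean case requires Harish-Chandra descent, Luna slices and control of transversal derivatives rather than an orbit-by-orbit check.

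The more serious gap is the last step. A bound $\dim\operatorname{Hom}_{P_n}^{\mathrm{cont}}(\pi^{\infty},\sigma^{\infty})\leq 1$ for every irreducible $\sigma$ does not imply irreducibility of $\pi|_{P_n}$: a direct integral over a continuous family of mutually inequivalent irreducibles, each with multiplicity one, is still reducible, and the infinitesimal character of $\pi$ does not pin the Mackey data $(\sigma,\psi)$ of the constituents down to a single conjugacy class ($P_n$ is not reductive, and $Z(U(\g_n))$ does not act on the constituents of $\pi|_{P_n}$ in the way your sketch assumes). The actual deduction bypasses direct integrals entirely: given a self-adjoint bounded operator $T$ commuting with $\pi(P_n)$, one forms the distribution $f\mapsto\operatorname{Tr}(\pi(f)T)$, which is $P_n$-conjugation-invariant and $Z(U(\g_n))$-finite; the key theorem forces it to be invariant under conjugation by all of $G_n$, whence $T$ commutes with $\pi(G_n)$ and is scalar by Schur's lemma, so the von Neumann commutant of $\pi(P_n)$ is trivial. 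You should restructure your argument around conjugation-invariant distributions and the commutant rather than around bi-invariant distributions and Mackey direct integrals.
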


\noindent This was first proven in the $p$-adic case in \cite{Ber}, then in the
complex case in \cite{Sahi-Kirillov}, and finally in the real case in
\cite{Bar}. Three new proofs have been obtained recently in \cite{AG,SZ,GaLa}.

The second ingredient is Mackey theory that describes the unitary dual of Lie
groups, such as $P_{n}$, which are of the form $G=H\ltimes Z$ \ with $Z$
abelian. In this case $\widehat{Z}$ consists of unitary characters and $H$
acts on $\widehat{Z}$; for $\chi\in\widehat{Z}$ \ let $S_{\chi}$ denote its
stabilizer in $H$. If $\sigma\in\widehat{S_{\chi}}$ then $\sigma\otimes\chi$
is a unitary representation of $S_{\chi}\ltimes Z$, and we define
\[
I_{\chi}\left(  \sigma\right)  =Ind_{S_{\chi}\ltimes Z}^{G}\left(
\sigma\otimes\chi\right)
\]
The main results of Mackey theory are as follows:
\begin{enumerate}[(a)]
\item $I_{\chi}\left(
\sigma\right)  $ is irreducible for all $\sigma\in\widehat{S_{\chi}}$;

\item
$\widehat{G}$ is the disjoint union of $I_{\chi}\left(  \widehat{S_{\chi}%
}\right)  $ as $\chi$ ranges over representatives of distinct $H$-conjugacy
classes in $\widehat{Z}$.
\end{enumerate}

Since $P_{n}\approx G_{n-1}\ltimes{\mathbb{R}}^{n-1}$ we may analyze
$\widehat{P_{n}}$ by Mackey theory. There are two $G_{n-1}$-conjugacy classes
in $\widehat{\R^{n-1}}$; one class consists of the trivial character $\chi_{0}$
alone, while the other class contains all other characters. As a
representative of the second class we pick the character $\chi_{1}$ defined by%
\[
\chi_{1}(a_{1},...,a_{n-1})=\exp\left(  ia_{n-1}\right)
\]
The stabilizers in $G_{n-1}$ are $S_{\chi_{0}}=G_{n-1}$ and $S_{\chi_{1}%
}=P_{n-1}$ and therefore we get%

\begin{equation}
\widehat{P_{n}}=I_{\chi_{0}}\left(  \widehat{G_{n-1}}\right)  \coprod
I_{\chi_{1}}\left(  \widehat{P_{n-1}}\right)  \label{PGP}%
\end{equation}
We may iterate (\ref{PGP}) until we arrive at the trivial group $P_{1}=G_{0}$
\[
\widehat{P_{n}}=\coprod_{k=1}^{n}I_{\chi_{1}}^{k-1}I_{\chi_{0}}\left(
\widehat{G_{n-k}}\right)
\]
Combining this with Theorem \ref{BarKir} we deduce that for $\pi
\in\widehat{G_{n}}$ with $n>0$, there exists a unique natural number $d$ and a
unique $\pi^{\prime}\in\widehat{G_{n-d}}$ such that
\begin{equation}
\pi|_{P_{n}}=I_{\chi_{1}}^{d-1}I_{\chi_{0}}\left(  \pi^{\prime}\right)
\label{der}%
\end{equation}

\begin{definition}
\label{Api} \cite{Sahi-Kirillov} If $\pi\in\widehat{G_{n}}$ and $\pi^{\prime
}\in\widehat{G_{n-d}}$ satisfy (\ref{der}) we say that $\pi$ has depth $d$ and
that $\pi^{\prime}$ is the adduced representation (or highest derivative) of
$\pi$, and we write $\pi^{\prime}=A\pi$.
\end{definition}

Let $\star$ denote the trivial representation of the trivial group $G_{0}$. The
procedure of taking the adduced representation can be iterated until we arrive at
$\star$. Thus we obtain a sequence of unitary representations
\begin{equation}
\left(  \pi_{0}=\pi,\pi_{1},\ldots,\pi_{l-1},\pi_{l}=\star\right)  \text{
satisfying }\pi_{j}=A\left(  \pi_{j-1}\right)  \text{ for }1\leq j\leq
l\text{.} \label{derseq}%
\end{equation}
and we write $d_{i}=$ $d_{i}\left(  \pi\right)  $ for the depth of $\pi_{i-1}$. Note that $d_{1},d_{2},\ldots,d_{l}$ are natural numbers, and their sum is
precisely $n$.

\begin{definition}
\label{DP} The composition $\left(  d_{1},d_{2},\ldots,d_{l}\right)  $ is called
the \emph{depth composition} of $\pi\in\widehat{G_{n}}$ and is denoted
$DC\left(  \pi\right)  $.
\end{definition}

\subsubsection{Howe Rank}\label{subsec:HowR}

$P_{\alpha}$ is called a maximal parabolic if $\alpha$ has length $2$ so that
$\alpha=\left(  a,b\right)  $ with $a+b=n$. In this case $N_{\alpha}$ is
abelian and isomorphic to $M_{a\times b}$, the additive group of $a\times b$
real matrices. The unitary dual of $N_{\alpha}$ consists of unitary
characters and can also be identified with $M_{a\times b}$ via $\chi
_{y}\left(  x\right)  =\exp\left(  iTr\left(  xy^{t}\right)  \right)  $. The
group $L_{\alpha}=G_{a}\times G_{b}$ acts on $M_{a\times b}$ in the usual
manner, and the orbits are $\mathcal{R}_{0},\ldots,\mathcal{R}_{\min\left(
a,b\right)  }$ where $\mathcal{R}_{k}$ denotes the set of matrices of rank
$k$. Note that $\min\left(  a,b\right)  \leq\left\lfloor n/2\right\rfloor $
and that equality holds for $a=\left\lfloor n/2\right\rfloor ,b=n-\left\lfloor
n/2\right\rfloor $.

We now briefly describe the theory of Howe rank for $G_{n}$; let us fix
$\alpha=\left(  a,n-a\right)  $ as above. If $\pi\in\widehat{G_{n}}$\ then by
Stone's theorem the restriction $\pi|_{N_{\alpha}}$ corresponds to a
projection-valued Borel measure $\mu^{\pi}$ on $\widehat{N_{\alpha}}\approx
M_{a\times\left(  n-a\right)  }$. Since $\pi$ is a representation of
$P_{\alpha}$, $\mu^{\pi}$ is $P_{\alpha}$-invariant and decomposes as a direct
sum
\[
\mu^{\pi}=\mu_{0}^{\pi}+\cdots+\mu_{\min\left(  a,n-a\right)  }^{\pi}\text{
with }\mu_{k}^{\pi}\left(  E\right)  =\mu^{\pi}\left(  E\cap\mathcal{R}%
_{k}\right)
\]
Building on the work of Howe \cite{How}, Scaramuzzi \cite{Sca} proved that
$\mu_{\pi}$ has \textquotedblleft pure rank\textquotedblright, i.e. there is
some integer $k=HR\left(  \pi,a\right)  $ such that $\mu^{\pi}=\mu_{k}^{\pi}$.
Moreover if we define $HR\left(  \pi\right)  =HR\left(  \pi,\left\lfloor
n/2\right\rfloor \right)  $ then
\[
HR\left(  \pi,a\right)  =\min\left(  HR\left(  \pi\right)  ,a,n-a\right)
\text{ for all }a\leq n
\]

\begin{definition}
\label{HR} For $\pi\in\widehat{G_{n}}$ the integer $HR\left(  \pi\right)  $ is
called the Howe rank of $\pi.$
\end{definition}


\subsection{Results over other local fields and a uniform formulation of
Theorem \ref{thm:Main}}

Theorem \ref{thm:Main} also holds in the complex case, and the proof is very
similar. We comment on that in section \ref{sec:Complex}.


In section \ref{sec:padic_short} we prove a $p$-adic analog of Theorem
\ref{thm:Main}, using \cite{BZ-Induced,MW,Zl}. In the $p$-adic case one cannot define annihilator variety, but one can consider the wave front set $WF(\pi)$.
It is not known to be the closure of a single nilpotent orbit for general reductive group but for irreducible smooth representations of $GL_n(F)$ this is proven to be the case in \cite{MW}.

Since in the $p$-adic case the
notion of derivative is defined for all smooth representations, this analog
does not require the representation to be unitary. This gives us the following
uniform formulation of the main theorem.

\begin{introtheorem}
\label{thm:UniformMain} Let $F$ be a local field of characteristic zero. Let
$\pi$ be an irreducible smooth admissible representation of $\operatorname{GL}%
(n,F))$ and let $WF(\pi) \subset \gl_n(F)$ denote the wave front set of $\pi$. Let $\lambda$ be the partition of $n$ such that $WF(\pi) = \overline{{\mathcal{O}}_{\lambda}}$.
Suppose that either $F$ is non-Archimedean or $\pi$ is unitarizable. Then

\begin{enumerate}
\item \label{it:BWh} $Wh_{\lambda}^{\ast}(\pi)\neq0$, and for any composition ${\alpha}$ with
$Wh_{{\alpha}}^{\ast}(\pi)\neq0$ we have ${\mathcal{O}}_{{\alpha}}%
\subset\overline{{\mathcal{O}}_{\lambda}}$.

\item \label{it:BDC} $DC\left(  \pi\right)  =\lambda$; in particular $DC\left(
\pi\right)  $ is a non-increasing sequence.

\item \label{it:BCP} $\lambda$ is the transpose of the classification partition of $\pi$ (see Remark \ref{rem:CP} below).
\end{enumerate}

Moreover, if $\pi$ is unitarizable then $HR\left(  \pi\right)  =\min\left(
\left\lfloor n/2\right\rfloor ,n-length(\lambda)\right)  $.
\end{introtheorem}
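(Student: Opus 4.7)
The strategy is to partition by the nature of $F$ and reduce each case to previously established results; the genuine work is concentrated in part~(3).

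In the Archimedean setting $\pi$ is unitarizable by hypothesis, so Theorem~\ref{thm:Main} applies directly. The results of Rossmann and Schmid--Vilonen cited after Definition~\ref{def:AP} identify $WF(\pi)$ with $\mathcal{V}(\pi)=\overline{\mathcal{O}_{\lambda}}$, so the $\lambda$ of Theorem~\ref{thm:UniformMain} equals $AP(\pi)$. Part~(2) is then immediate from Theorem~\ref{thm:Main}(2), with monotonicity automatic from the fact that $\lambda$ is a partition. Part~(1) splits into the existence assertion $Wh_{\lambda}^{*}(\pi)\neq 0$, which is Theorem~\ref{thm:Main}(1), and the upper bound $Wh_{\alpha}^{*}(\pi)\neq 0 \Rightarrow \mathcal{O}_{\alpha} \subset \overline{\mathcal{O}_{\lambda}}$, supplied by Matumoto's theorem (Corollary~\ref{cor:Mat}).

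In the non-Archimedean case the inputs change but the structural argument is parallel. Moeglin--Waldspurger \cite{MW} supplies two ingredients at once: first, that $WF(\pi)$ is a single orbit closure $\overline{\mathcal{O}_{\lambda}}$; and second, that existence of a degenerate Whittaker functional of type $\alpha$ is equivalent to $\mathcal{O}_{\alpha} \subset \overline{\mathcal{O}_{\lambda}}$, which is precisely part~(1). For part~(2), the Bernstein--Zelevinsky derivative theory \cite{BZ-Induced,Ber,Zl} identifies the degree of the highest non-vanishing derivative of $\pi$ as $\lambda_{1}$ and shows that the resulting irreducible admissible representation has wave-front partition $(\lambda_{2},\ldots,\lambda_{k})$; iteration then yields $DC(\pi)=\lambda$.

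Part~(3) is the heart of the argument and the main technical obstacle. By Vogan's classification in the Archimedean case, or Zelevinsky's in the $p$-adic case, $\pi$ is, up to rearrangement, a Bernstein--Zelevinsky product $\pi_{1}\times\cdots\times\pi_{k}$ of building blocks---Speh representations or their complementary series deformations---each contributing a rectangle to the classification partition. Proving~(3) reduces to two tasks: compute $AP$ on each building block and verify that it equals the transpose of the corresponding rectangle; and prove additivity of $AP$ under the $\times$-product. Stability under unitary parabolic induction, especially for complementary series, is the subtle ingredient and constitutes the principal difficulty. Once~(3) is in hand, the Howe rank assertion follows from \cite[Theorems 0.1, 0.2, 4.3]{He}, or equivalently from the independent argument indicated in Remark~\ref{rem:SmallRep}, since $HR(\pi)$ depends only on $\mathrm{length}(\lambda)$.
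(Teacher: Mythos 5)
Your Archimedean argument is essentially the paper's: Theorem \ref{thm:Main} plus Matumoto's theorem (Corollary \ref{cor:Mat}) give parts (1)--(2), the Rossmann/Schmid--Vilonen identification converts $\mathcal{V}(\pi)$ into $WF(\pi)$, and part (3) together with the Howe-rank formula is the content of Theorem \ref{thm:AVTadic} and Remark \ref{rem:SmallRep}. Note, though, that you leave the two "tasks" of part (3) unexecuted: additivity of $AP$ under $\times$ is not really the subtle point (it is the cited Barbasch--Brylinski theorem combined with Lusztig--Spaltenstein induction, Corollary \ref{cor:BarBoz}); the real computation is $AP(\delta(2m,k))=2^m$, which the paper obtains by combining Sahi--Stein's formula $A\delta(2m,k)=\delta(2(m-1),k)$ with Theorem \ref{thm:Main}(2) and induction.

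The genuine gap is in your non-Archimedean part (1). You attribute to Moeglin--Waldspurger the equivalence $Wh^{*}_{\alpha}(\pi)\neq 0 \Leftrightarrow \mathcal{O}_{\alpha}\subset\overline{\mathcal{O}_{\lambda}}$. That equivalence is false: for a cuspidal representation $\rho$ of $GL_2(F)$ one has $WF(\rho)=\overline{\mathcal{O}_{(2)}}\supset\mathcal{O}_{(1,1)}$, yet $Wh^{*}_{(1,1)}(\rho)\simeq (D^{1}D^{1}\rho)^{*}=0$ because the first derivative of a cuspidal representation vanishes. Moreover, the degenerate Whittaker models of \cite{MW} are attached to an orbit via an $\mathfrak{sl}_2$-triple and use a non-degenerate character of the nilradical of the associated (larger) parabolic; they are not the functionals $Wh^{*}_{\alpha}$ of this paper, which use the degenerate characters $\psi_{\alpha}$ of the nilradical of the Borel. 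So even the one implication you actually need cannot be read off from \cite{MW} without a translation. The paper supplies that translation via Zelevinsky's identity $Wh^{*}_{\alpha}(\pi)\simeq (D^{\alpha_k}\cdots D^{\alpha_1}\pi)^{*}$, and then proves all three parts simultaneously by reducing to segment representations: since $D$ and $WFC$ are ring homomorphisms of the Grothendieck ring $R$ and $R$ is a polynomial ring on the $\langle\Delta\rangle$, it suffices to observe that $D\langle\Delta\rangle=\langle\Delta\rangle+\langle\Delta^{-}\rangle$ forces $D^{\alpha}\langle\Delta\rangle=0$ unless $\alpha=\lambda_{\Delta}$, and that $WFC(\langle\Delta\rangle)=\mathcal{O}_{\lambda_{\Delta}}$ by \cite[II.2]{MW}. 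Your part (2) has the same weakness: asserting that the highest derivative of $\pi$ has degree $\lambda_1$ and wave-front partition $(\lambda_2,\dots,\lambda_k)$ is essentially the statement to be proved, and it is the reduction to segments that turns it into a computation.
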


If $F$ is non-Archimedean we also have $\dim Wh_{DC\left(  \pi\right)
}^{\ast}(\pi)=1$.

In the $p$-adic case, $Wh_{\alp}^*$ denotes the space of all linear equivariant functionals, since all representations are considered in discrete topology.

\begin{remark}\label{rem:CP}
The classification partition is defined in \cite{Ven} for all irreducible unitary representations, through the Tadic-Vogan classification, and in \cite{OS2} for all irreducible smooth representations through the Zelevinsky classification. For representations of Arthur type, this partition describes the $SL(2)$-type of the representation of the Weil-Deligne group corresponding to $\pi$ by local Langlands correspondence.
\end{remark}
In the Archimedean case, Theorem \ref{thm:UniformMain} follows from Theorem \ref{thm:Main}, Corollary \ref{cor:Mat}, Theorem \ref{thm:AVTadic} and Remark \ref{rem:SmallRep} below.

\subsection{Earlier works}\label{subsec:PrevRes}

It was proven by Casselman-Zuckerman for $G_n$ (unpublished) and by Kostant (\cite{Kos}) for all quasi-split reductive groups that for a generic character $\psi$ of $N$, $Wh^*_{\psi}(\pi) \neq 0$ if and only if $Wh'_{\psi}(\pi) \neq 0$ if and only if $\cV(\pi)$ is maximal possible, i.e. equal to the nilpotent cone.
Matumoto (\cite[Theorem 1]{Mat}) proved a generalization of one direction of this statement. For the case of $G_n$ his theorem implies

\begin{cor}
\label{cor:Mat} If $\alp$
is a composition of $n$, and $M$ is a ${\mathfrak{g}}_{n}$-module such that $\operatorname{Hom}
_{\n}(M,\psi_{\alpha})\neq0$ then $\mathcal{O}_{\alpha}\subset\mathcal{V} (M)$.
\end{cor}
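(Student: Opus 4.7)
The plan is to deduce this corollary as a direct specialization of Matumoto's theorem \cite[Theorem 1]{Mat}, combined with the explicit matching set up in Section \ref{subsec:genWhitFunc}. Matumoto's theorem asserts, in essence, that if $M$ is a $\mathfrak{g}_n$-module admitting a nonzero $\mathfrak{n}$-equivariant map to $\mathbb{C}_\psi$, then the coadjoint orbit of $\psi$, viewed as an element of $\mathfrak{g}_n^*$, lies in $\mathcal{V}(M)$. So the task reduces to identifying the nilpotent orbit attached to $\psi_\alpha$.

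First I would translate $\psi_\alpha$ into the language of $\mathfrak{g}_n^*$. Using the trace-form identification of Remark \ref{traceform} together with the embedding of $\mathfrak{n}^*$ into $\mathfrak{g}_n^*$ as subdiagonal matrices (as in the remark preceding the definition of $\psi_\alpha$), the character $\psi_\alpha$ corresponds to the nilpotent matrix $iwJ_\alpha w^{-1}$, where $w$ is the longest Weyl element. Next I would verify that this matrix lies in $\mathcal{O}_\alpha$: conjugation by $w$ preserves Jordan type, so $wJ_\alpha w^{-1} \in \mathcal{O}_\alpha$; and multiplication by $i$ can be absorbed by conjugation with the diagonal matrix $\operatorname{diag}(1,i,i^2,\ldots,i^{n-1})$, which rescales each subdiagonal entry by $i$ without altering Jordan type. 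Hence $\psi_\alpha \in \mathcal{O}_\alpha$, and Matumoto's theorem yields $\mathcal{O}_\alpha = G_n(\mathbb{C}) \cdot \psi_\alpha \subseteq \mathcal{V}(M)$.

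The one technical point I would be most careful about is matching the generality of Matumoto's statement (originally formulated for Harish-Chandra modules of a real reductive group) with the formulation here for an arbitrary $\mathfrak{g}_n$-module $M$. In the applications of this corollary inside the body of the paper one only needs the case $M = \pi^{HC}$, and then no adjustment is required. For the general statement one would either appeal to the fact that Matumoto's argument is essentially algebraic and goes through for any $\mathfrak{g}_n$-module with a well-defined annihilator variety, or else reduce to the Harish-Chandra setting by replacing $M$ with a cyclic $\mathfrak{g}_n$-submodule still admitting the given $\mathfrak{n}$-equivariant functional, and then passing to an irreducible quotient whose annihilator contains that of $M$ (so its annihilator variety is contained in $\mathcal{V}(M)$).
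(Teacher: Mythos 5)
Your proposal is correct and follows essentially the same route as the paper, which offers no separate argument for Corollary \ref{cor:Mat} beyond citing Matumoto's Theorem 1 and the observation, recorded already in the definition of $\psi_{\alpha}$, that $\psi_{\alpha}\in\mathcal{O}_{\alpha}$ (your scaling-by-$i$ check, or simply the fact that a nonzero scalar multiple of a nilpotent matrix has the same Jordan type, together with the $G_n(\C)$-invariance of $\mathcal{V}(M)$, settles that point). Your closing caveat is not needed: Matumoto's theorem is an algebraic statement about $U(\mathfrak{g})$-modules rather than Harish-Chandra modules, which is exactly why the paper can state the corollary for an arbitrary $\mathfrak{g}_{n}$-module $M$; note also that your fallback reduction would be delicate as written, since a $\psi_{\alpha}$-equivariant functional need not descend to an arbitrary irreducible quotient.
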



Over $p$-adic fields, a connection between wave front set and generalized
Whittaker functionals was investigated in \cite{MW} for smooth (not necessary
unitarizable) representations of any reductive group. However, the main
theorem of \cite{MW} involves a lot of choices and for the case of $GL_{n}$
can be made much more concrete, using derivatives and Zelevinsky
classification, following \cite{BZ-Induced,Zl}.

Several works (e.g. \cite{GW,MatDuke,MatActa}) give a partial Archimedean analog of the results of \cite{MW}, by considering
non-degenerate (``admissible") characters of the (smaller) nilradicals of
bigger parabolic subgroups. However, much less is known in the Archimedean case. Our work establishes a different type of analog:
instead of considering non-degenerate characters of smaller nilradicals, we
consider degenerate characters of the nilradical of the standard Borel
subgroup. Following Zelevinsky \cite[\S\S 8.3]{Zl}, we call functionals equivariant with respect
to such characters \emph{degenerate Whittaker functionals}.

\subsection{Generalizations in future works}

\label{subsec:GenGroup} In \cite{AGS},
we define the notion of highest derivative for all admissible smooth
\Fre representations of $G_{n}$. We show that this notion extends the notion of adduced representation discussed in the current paper, and establish several properties of highest derivative analogous to the ones proven
in the $p$-adic case in \cite{BZ-Induced}.
We apply those properties to questions raised in the current paper. Namely, we complete the computation of adduced representations for all Speh complementary series, and prove that $\dim Wh_{DC\left(  \pi\right)
}^{\ast}(\pi)=1$ for all $\pi \in \widehat{G_n}$.

In our work in progress \cite{GS} we prove that
\[
Wh_{\alpha}^{\ast}\left(  \pi\right)  \neq0\Leftrightarrow Wh_{\alpha}%
^{\prime}\left(  \pi^{HC}\right)  \neq0\Leftrightarrow{\mathcal{O}}_{\alpha
}\subset\mathcal{V}(\pi).
\]
for any irreducible admissible smooth \Fre representation
$\pi$ of $G_{n}$ and any composition $\alpha$ of $n$.

Furthermore, we prove the following generalization for \Dima{any quasi-split real reductive
group $G$.} Let $N$ be the nilradical of a Borel subgroup of $G$, and $K$ be  maximal compact subgroup of $G$. Let
$\mathfrak{g}$ and $\mathfrak{n}$ be the complexified Lie algebras of $G$ and
$N$. Let $\pi$ be an irreducible admissible smooth representation of $G$. Denote $$\Psi(\pi):=\{\psi\in\Psi \text{ s.t. }
Wh^*_{\psi}(\pi) \neq0\}, \, \text{ and } \, \Psi(\pi^{HC}):=\{\psi\in\Psi \text{ s.t. } Wh'_{\psi}(\pi) \neq0\}.$$
In \cite{GS} we prove
\begin{enumerate}
\item $\Psi(\pi^{HC})=pr_{\mathfrak{n}^{*}}(AV(\pi^{HC})) \Dima{\cap \Psi}$
\label{Cit:HC}

\item $\Psi(\pi) \subset  \Dima{\Psi(\pi^{HC}) \cap i\Psi({\mathbb{R}})}$, and under certain condition on $G$ equality holds.
\label{Cit:real}
\end{enumerate}

%

\subsection{The proposed notion of rank}

We suggest the following definition of rank.

\begin{defn} \label{def:rank}
If $\pi$ is a 
smooth \Fre representation of $G_{n}$, we
define the rank of $\pi$, written $\rk(\pi)$, to be the maximum rank of a matrix in ${
\mathcal{V}(\pi)}$.
\end{defn}

If $\mathcal{V}(\pi)$ is a closure of a single orbit given by a partition of
length $k$ then $\rk(\pi)=n-k$.
Theorem \ref{thm:UniformMain} implies that for $\pi\in\widehat{G_{n}}$ our notion of
rank agrees with the notion proposed in \cite{Sahi-Kirillov}, and extends
Howe's notion of rank. In subsection \ref{subsec:CompAV} we compute (extended) ranks
of all irreducible unitary representations of $G_{n}$ in terms of the
Vogan classification.

Our definition extends literally to all classical groups, and connects to
Howe's notion of rank by \cite[Theorem 0.4]{He}.

\Dima{For $G_n$ we can} give another interpretation of rank, in terms of parabolic induction.
Let $\lambda$ be a partition of $n$, $\alpha:=\lambda^{\leq}$ be the inverse reordering of $\lambda$, $P_{\alp}$ the corresponding standard
parabolic subgroup, $L_{\alp}$ its Levi component, $N_{\alp}$ its
nilradical, and $\mathfrak{n}_{\alp}$ its Lie algebra. Consider the ``naive"
(non-exact) Jacquet restriction functor $r_{\alp}$ that maps $\pi$ to
$\pi/\mathfrak{n}_{\alp}\pi$. This functor is adjoint to parabolic
induction from $P_{\alp}$. Note that $Wh_{\alp}^{\ast}(\pi)$ is equal to
the space $Wh^{\ast}(r_{\alp}(\pi))$ of generic (classical) Whittaker
functionals on $r_{\alp}(\pi)$. Therefore, Theorem \ref{thm:Main}
%
implies that for $\pi\in\widehat{G_{n}}$, $\rk(\pi)\geq k$ if and only if the
Jacquet restriction of $\pi$ to some Levi subgroup of semi-simple rank $k$ is generic.

The $\rk(\pi)$ is also equal to the (real) dimension of the variety consisting of
unitary characters $\psi$ of $N$ such that $Wh_{\psi}^{\ast}(\pi)\neq0$. 



\Dima{For $G\neq G_{n}$ we do not have equivalent descriptions of the rank using Jacquet functor or the space of functionals.}

\subsection{Application to construction of Klyachko models}

Our technique can be applied to construction of another family of models for
unitary representations of $G_{n}$, called Klyachko models. Let $F$ be a local
field.
For any decomposition $n=r+2k$ define a subgroup $H_{r,2k}\subset G_{n}$ ,
which is a semi-direct product of $Sp_{2k}(F)$ and the nilradical $N_{\left(
1,...,1,2k\right)  }$ of the standard parabolic $P_{\left(  1,...,1,2k\right)
}$, and let $\phi_{2k,r}$ denote the generic character $\psi_{(n)}$ restricted to $N_{\left(
1,...,1,2k\right)  }$ and then extended trivially to $H_{r,2k}$. Then the $r$-th
Klyachko model of $\pi\in\widehat{G_{r+2k}}$ is defined to be
$\operatorname{Hom}_{H_{r,2k}}(\pi^{\infty},\phi_{r,2k})$.

Offen and Sayag proved existence, uniqueness and disjointness of Klyachko
models over $p$-adic fields.
They defined the appropriate $r$ in terms of Tadic classification, used
derivatives to reduce the statement to the case $r=0$, in which an
$Sp$-invariant functional is constructed using a global (automorphic)
argument. Theorem \ref{thm:UniformMain} allows one to define $r$ in terms of
the partition corresponding to $\pi$, and then to extend the construction of
Klyachko models given in \cite[\S 3]{OSConst} to the Archimedean case. This is
done in \cite{GOSS}.

\subsection{Structure of the paper}

In section \ref{sec:Prel} we give several necessary definitions and
preliminary results on geometry of coadjoint orbits, analytic and smooth
vectors, and annihilator varieties.



In section \ref{sec:Der} we prove the main theorem by induction.
First we note that $\pi|_{P_n}$ is induced from $A\pi$. This gives us a map $\pi^{\infty} \to (A\pi)^{\infty}$ which has dense image and satisfies a certain equivariant condition with respect to a subgroup of $P_n$ that includes $N$. This also enables us to compute the dimension of $\cV(\pi|_{P_n})$ in terms of $\dim \cV(A\pi)$.
Using the induction hypothesis and the map $\pi^{\infty} \to (A\pi)^{\infty}$, we show that $Wh^*_{DC(\pi)}(\pi) \neq 0$. By Matumoto's theorem this implies $\overline{\cO_{DC(\pi)}} \subset \cV(\pi)$. To show equality we prove that the dimension of the annihilator variety does not drop when we restrict $\pi$ to $P_n$, and then use the induction hypothesis.

In section \ref{sec:CompDer} we compute the adduced representation for
representations of Speh complementary series (except one special case), thus
almost finishing the computation of adduced representations of all unitary
irreducible representations. We use the restriction on the annihilator variety
of the adduced representation given by Theorem \ref{thm:Main} and a restriction on
the infinitesimal character of the adduced representation that we deduce from
Casselman - Osborne lemma. Those two restrictions allow us to determine the
adduced representation uniquely, except in one case where we present two possibilities.

In section \ref{sec:Complex} we explain how our proofs adapt to the complex case.


In section \ref{sec:padic_short} we deal with the $p$-adic case. Most of the
arguments here are simply sketched since the components of various proofs
appear already in \cite{Zl} and \cite{MW}.



\subsection{Acknowledgements}

We wish to thank Avraham Aizenbud, Joseph Bernstein, Roe Goodman, Anthony
Joseph, and Nolan Wallach
for fruitful discussions and Gordan Savin \Dima{and the referee} for useful remarks.

This paper was conceived when D.G. was a Hill Assistant
Professor at Rutgers University. He wishes to thank the
Mathematics department for warm and fruitful atmosphere and good
working conditions.

\section{Preliminaries}

\label{sec:Prel}

\subsection{Induction and dimensions of nilpotent orbits}
\label{subsec:GeoOrbits}

We now recall Lusztig-Spaltenstein induction of nilpotent orbits. Let
${\mathfrak{g}}$ be a complex reductive Lie algebra. Let $\mathfrak{p}$ be a
parabolic subalgebra with nilradical $\mathfrak{n}$ and Levi quotient
$\mathfrak{l}:=\mathfrak{p}/\mathfrak{n}$. Then we have a natural projection
$pr\colon {\mathfrak{g}}^{\ast}\twoheadrightarrow\mathfrak{p}^{\ast}$ and a natural
embedding $\mathfrak{l}^{\ast}\subset\mathfrak{p}^{\ast}$.

\begin{thm}
[\cite{CoMG}, Theorem 7.1.1]In the above situation let ${\mathcal{O}%
}_{\mathfrak{l}}\subset\mathfrak{l}^{\ast}$ be a nilpotent orbit. Then there
exists a unique nilpotent orbit ${\mathcal{O}}_{{\mathfrak{g}}}$ that meets
$pr^{-1}({\mathcal{O}}_{\mathfrak{l}})$ in an open dense subset. We have
\[
\dim{\mathcal{O}}_{{\mathfrak{g}}}=\dim{\mathcal{O}}_{\mathfrak{l}}+2\dim
\mathfrak{n}.
\]

\end{thm}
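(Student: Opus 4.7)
The plan is to realize ${\mathcal{O}}_{{\mathfrak{g}}}$ as the $G$-saturation of $pr^{-1}({\mathcal{O}}_{\mathfrak{l}})$ and then verify the dimension via a generic-finiteness argument. First, I would observe that $pr^{-1}({\mathcal{O}}_{\mathfrak{l}})$ is $P$-stable: the annihilator $\mathfrak{p}^{\perp}=\ker(pr)\subset \mathfrak{g}^{\ast}$ is $P$-stable by construction, and the embedded $\mathfrak{l}^{\ast}\subset \mathfrak{p}^{\ast}$ is $P$-stable with $P$ acting through $L$, so the preimage $pr^{-1}({\mathcal{O}}_{\mathfrak{l}})={\mathcal{O}}_{\mathfrak{l}}\oplus \mathfrak{p}^{\perp}$ (after a Levi splitting) is $P$-stable. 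It is irreducible as a bundle over ${\mathcal{O}}_{\mathfrak{l}}$ with fiber $\mathfrak{p}^{\perp}$, of dimension
\[
\dim pr^{-1}({\mathcal{O}}_{\mathfrak{l}})=\dim{\mathcal{O}}_{\mathfrak{l}}+\dim \mathfrak{p}^{\perp}=\dim{\mathcal{O}}_{\mathfrak{l}}+\dim\mathfrak{n}.
\]

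Next I would check that every element of $pr^{-1}({\mathcal{O}}_{\mathfrak{l}})$ is nilpotent in $\mathfrak{g}^{\ast}$. Via the trace form identification $\mathfrak{g}^{\ast}\cong \mathfrak{g}$, the set $pr^{-1}({\mathcal{O}}_{\mathfrak{l}})$ corresponds to ${\mathcal{O}}_{\mathfrak{l}}+\overline{\mathfrak{n}}$, where $\overline{\mathfrak{n}}$ is the opposite nilradical; any $G$-invariant polynomial evaluated at $X+Y$ with $X\in{\mathcal{O}}_{\mathfrak{l}}$ and $Y\in\overline{\mathfrak{n}}$ vanishes, as can be seen by scaling $Y$ by a one-parameter subgroup in the center of $L$ and using that invariant polynomials are constant on $L$-orbits. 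Thus $G\cdot pr^{-1}({\mathcal{O}}_{\mathfrak{l}})$ is an irreducible $G$-stable subset of the nilpotent cone, hence is the closure of a single nilpotent $G$-orbit, and this orbit ${\mathcal{O}}_{{\mathfrak{g}}}$ is the unique orbit meeting $pr^{-1}({\mathcal{O}}_{\mathfrak{l}})$ in an open dense subset.

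For the dimension, I would consider the $G$-equivariant map
\[
\mu\colon G\times^{P} pr^{-1}({\mathcal{O}}_{\mathfrak{l}})\longrightarrow \mathfrak{g}^{\ast},\quad (g,x)\longmapsto g\cdot x,
\]
whose image is $\overline{{\mathcal{O}}_{{\mathfrak{g}}}}$ and whose source has dimension $\dim G/P+\dim pr^{-1}({\mathcal{O}}_{\mathfrak{l}})=\dim{\mathcal{O}}_{\mathfrak{l}}+2\dim\mathfrak{n}$. This gives the upper bound $\dim{\mathcal{O}}_{{\mathfrak{g}}}\leq\dim{\mathcal{O}}_{\mathfrak{l}}+2\dim\mathfrak{n}$.

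The main obstacle, and the heart of the argument, is the matching lower bound: one must show $\mu$ is generically finite. I would argue this by picking $X\in{\mathcal{O}}_{\mathfrak{l}}\subset pr^{-1}({\mathcal{O}}_{\mathfrak{l}})$ and completing it (via Jacobson--Morozov applied in $\mathfrak{l}$) to an $\mathfrak{sl}_2$-triple $(X,H,Y)$ with $H\in\mathfrak{l}$ and $Y\in\mathfrak{l}$. Representation theory of $\mathfrak{sl}_2$ then decomposes both $\mathfrak{g}$ and $\mathfrak{l}$ into $H$-weight spaces, and a standard count (as in Collingwood--McGovern) shows that the dimension of the centralizer $\mathfrak{g}^{X}$ differs from the dimension of the $\mathfrak{l}$-centralizer $\mathfrak{l}^{X}$ by exactly $2\dim\mathfrak{n}-2\dim(\mathfrak{n}\cap\mathfrak{g}^{X})$; combined with $\dim \mathcal{O}_{\mathfrak{l}}=\dim\mathfrak{l}-\dim\mathfrak{l}^X$ this gives $\dim{\mathcal{O}}_{{\mathfrak{g}}}\geq \dim{\mathcal{O}}_{\mathfrak{l}}+2\dim\mathfrak{n}$, so the bound is tight and $\mu$ is generically finite. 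Combining the two inequalities yields the claimed equality.
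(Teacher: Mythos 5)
This statement is quoted in the paper from Collingwood--McGovern (Theorem 7.1.1); the paper gives no proof of its own, so your attempt can only be judged on its own merits. The set-up (irreducibility and $P$-stability of $pr^{-1}(\mathcal{O}_{\mathfrak{l}})$, nilpotency via contracting $\mathfrak{n}$ by a central cocharacter of $L$, existence and uniqueness of the dense orbit, and the upper bound $\dim\mathcal{O}_{\mathfrak{g}}\le\dim\mathcal{O}_{\mathfrak{l}}+2\dim\mathfrak{n}$ from the collapsing $\mu\colon G\times^{P}pr^{-1}(\mathcal{O}_{\mathfrak{l}})\to\mathfrak{g}^{\ast}$) is all fine, modulo a harmless convention slip: under the trace form $\mathfrak{p}^{\perp}$ is $\mathfrak{n}$ itself, not $\overline{\mathfrak{n}}$.

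The lower bound, which you correctly identify as the heart of the matter, is where the proof breaks. First, the identity you invoke is false: for $X\in\mathfrak{l}$ the decomposition $\mathfrak{g}=\overline{\mathfrak{n}}\oplus\mathfrak{l}\oplus\mathfrak{n}$ is $\operatorname{ad}(X)$-stable, so $\dim\mathfrak{g}^{X}-\dim\mathfrak{l}^{X}=2\dim(\mathfrak{n}\cap\mathfrak{g}^{X})$, not $2\dim\mathfrak{n}-2\dim(\mathfrak{n}\cap\mathfrak{g}^{X})$. Second, and more fundamentally, a centralizer count at $X\in\mathcal{O}_{\mathfrak{l}}$ can only compute $\dim G\cdot X=\dim\mathcal{O}_{\mathfrak{l}}+2\dim\mathfrak{n}-2\dim(\mathfrak{n}\cap\mathfrak{g}^{X})$, and $G\cdot X$ is \emph{not} the induced orbit: $\mathcal{O}_{\mathfrak{g}}$ passes through a \emph{generic} element of $X+\mathfrak{n}$, while $G\cdot X$ is generally a strictly smaller orbit in the boundary of $\overline{\mathcal{O}_{\mathfrak{g}}}$. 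The case $\mathcal{O}_{\mathfrak{l}}=\{0\}$ makes this vivid: there $X=0$, $G\cdot X=\{0\}$, yet the induced (Richardson) orbit has dimension $2\dim\mathfrak{n}$; your count gives $0$, not a lower bound of $2\dim\mathfrak{n}$. What is actually needed is the generic finiteness of $\mu$, i.e.\ that a generic $\xi\in\mathcal{O}_{\mathfrak{l}}+\mathfrak{n}$ lies in only finitely many $G$-translates of $\overline{\mathcal{O}_{\mathfrak{l}}}+\mathfrak{n}$ (equivalently $\mathfrak{g}^{\xi}=\mathfrak{p}^{\xi}$ up to finite index). This is the substance of the Lusztig--Spaltenstein argument -- typically proved by showing that $P$ has a dense orbit on $\mathcal{O}_{\mathfrak{l}}+\mathfrak{n}$ and analyzing the fiber of $\mu$ as a closed subvariety of $G/P$ -- and it cannot be replaced by an $\mathfrak{sl}_{2}$ weight count at the base point $X$.
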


\begin{defn}
The orbit ${\mathcal{O}}_{{\mathfrak{g}}}$ is denoted $Ind_{\mathfrak{l}%
}^{{\mathfrak{g}}}({\mathcal{O}}_{\mathfrak{l}})$ and called the induced orbit
of ${\mathcal{O}}_{\mathfrak{l}}$.
\end{defn}

\begin{thm}
[\cite{BB2}, Theorem 2]\label{thm:BarBoz} Let $G$ be a real reductive Lie group. If $\pi$ is an irreducible
representation of $G$ that is parabolically induced from a representation
$\sigma$ of a Levi subgroup $L$, then $\mathcal{O}\left(  \pi\right)  =$
$Ind_{\mathfrak{l}}^{{\mathfrak{g}}}\left[  \mathcal{O}\left(  \sigma\right)
\right]  $.
\end{thm}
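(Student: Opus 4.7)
The plan is to compute $\cV(\pi)$ directly in terms of $\cV(\sigma)$ via a good filtration on the induced module, and then use Joseph's theorem together with the uniqueness statement in the Lusztig--Spaltenstein theorem quoted above to identify the resulting orbit. Since $\cV(\pi)$ depends only on $\pi^{HC}$ (or equivalently on the underlying $U(\g)$-module of $\pi^{\infty}$), and since parabolic induction on Harish-Chandra modules agrees with the algebraic induction $U(\g)\otimes_{U(\fp)}\sigma^{HC}$ up to a Hecke-algebra twist affecting only $K$-types, one may work with this algebraic induction throughout.

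First I would fix a good $U(\fl)$-module filtration on $\sigma^{HC}$, so that $\operatorname{gr}(\sigma^{HC})$ is a finitely generated $S(\fl)$-module supported on $\cV(\sigma)=\overline{\cO(\sigma)}\subset\fl^{*}$. Letting $\fn$ act by $0$, view $\sigma^{HC}$ as a $U(\fp)$-module and transport the filtration to $\pi^{HC}\cong U(\g)\otimes_{U(\fp)}\sigma^{HC}$ by a Poincar\'e--Birkhoff--Witt argument; this produces a good $U(\g)$-filtration whose associated graded is $S(\g)\otimes_{S(\fp)}\operatorname{gr}(\sigma^{HC})$. A direct computation then identifies the support of this $S(\g)$-module with the $\Ad^{*}(G)$-saturation of $pr^{-1}(\overline{\cO(\sigma)})$, where $pr\colon\g^{*}\twoheadrightarrow\fp^{*}$ is the restriction map and we regard $\overline{\cO(\sigma)}$ inside $\fl^{*}\subset\fp^{*}$. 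Hence
\[
\cV(\pi)=\overline{\Ad^{*}(G)\cdot pr^{-1}(\overline{\cO(\sigma)})}.
\]

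Next I would identify this with the induced orbit closure. By Joseph's theorem $\cV(\pi)=\overline{\cO(\pi)}$ for a single nilpotent coadjoint orbit $\cO(\pi)$, so the formula above shows that $\cO(\pi)$ meets $pr^{-1}(\cO(\sigma))$ in an open dense subset: indeed $pr^{-1}(\cO(\sigma))$ is irreducible of dimension $\dim\cO(\sigma)+2\dim\fn$, which matches the expected dimension of $Ind_{\fl}^{\g}(\cO(\sigma))$ according to Theorem~7.1.1 of \cite{CoMG}. The uniqueness clause of that theorem then forces $\cO(\pi)=Ind_{\fl}^{\g}(\cO(\sigma))$, as desired.

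The main technical obstacle is ensuring that the filtration on $\pi^{HC}$ is genuinely good and that the resulting associated graded computes the support claimed above; this requires a careful PBW-style argument identifying $\operatorname{gr}(\pi^{HC})$ as a free $S(\g/\fp)$-module over $\operatorname{gr}(\sigma^{HC})$. A secondary issue is bridging the smooth or unitary parabolic induction appearing in the statement with the purely algebraic induction used for the computation, but since the two differ only by a $K$-type bookkeeping twist and $\cV$ is blind to the $K$-action, no information about the annihilator variety is lost in the passage.
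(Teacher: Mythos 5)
First, a point of order: the paper does not prove this statement at all --- it is quoted from \cite{BB2}, Theorem 2 --- so there is no internal proof to compare against, and I can only assess your sketch on its own terms. It contains two genuine gaps.

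The first is the claimed identification of $\pi^{HC}$ with $U(\g)\otimes_{U(\fp)}\sigma^{HC}$ ``up to a Hecke-algebra twist affecting only $K$-types.'' These are genuinely different $U(\g)$-modules, not two presentations of the same one: the algebraically induced module is a generalized-Verma-type object which is not even admissible, whereas $\pi^{HC}$ is. The honest relationship --- which the paper itself sets up in Lemma \ref{lem:IndPairing} --- is that $\pi^{\omega}$ is non-degenerately $\g$-paired with a \emph{quotient} of $U(\g)\otimes_{U(\fp)}\tau$ for a suitable twist $\tau$. A non-degenerate pairing does force equality of annihilators, but since one only controls a quotient this yields just the one inclusion $\cV(\pi)\subseteq \cV\bigl(U(\g)\otimes_{U(\fp)}\tau\bigr)$; an independent lower bound is needed, and indeed the paper's own use of Lemma \ref{lem:IndPairing} in \S\S\ref{subsec:PfAVDerFin} is one-sided and is supplemented by Corollary \ref{cor:dimAVres}.

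The second gap is the conflation of the associated variety of the module with its annihilator variety. What your good filtration computes is $\Supp(\operatorname{gr} M)=pr^{-1}(\overline{\cO(\sigma)})$, which is \emph{not} $G$-stable and has dimension $\dim\cO(\sigma)+\dim\fn$, not $\dim\cO(\sigma)+2\dim\fn$ as you assert (the fibers of $pr\colon\g^{*}\to\fp^{*}$ have dimension $\dim\fn$, so $pr^{-1}(\cO(\sigma))$ is strictly smaller than the induced orbit). The annihilator variety is $\text{Zeroes}(\operatorname{gr}\operatorname{Ann}M)$; it is $G$-stable and therefore contains $\overline{G\cdot\Supp(\operatorname{gr}M)}=\overline{Ind_{\fl}^{\g}\cO(\sigma)}$ by Lusztig--Spaltenstein, but the reverse inclusion cannot be read off the support of $\operatorname{gr}M$, because $\operatorname{gr}(\operatorname{Ann}M)$ is in general strictly smaller than $\operatorname{Ann}(\operatorname{gr}M)$. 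Bounding $\cV(\operatorname{Ann}M)$ from above is precisely the nontrivial content of Borho--Brylinski's theorem on induced ideals, and your sketch assumes it rather than proving it. Once both gaps are repaired, the concluding appeal to Joseph's irreducibility theorem and the uniqueness clause of the Lusztig--Spaltenstein theorem is sound.
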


We now specialize this discussion to ${\mathfrak{g=g}}_{n}$. In this case nilpotent obits are described by partitions.

\begin{definition}
\label{l+m} If $\lambda,\mu$ are partitions of $p,q$ of lengths $k,l$
respectively, we define a partition $\lambda+\mu$ of $p+q$ of length
$m=\max\left(  k,l\right)  $ as follows:
\[
(\lambda+\mu)_{i}=\lambda_{i}+\mu_{i}\text{ for }i\leq m
\]
where the missing $\lambda_{i},\mu_{i}$ are treated as $0$.
\end{definition}

\begin{remark}
We can describe $\lambda+\mu$ using transposed partitions: $\left(
\lambda+\mu\right)  ^{t}$ is the partition rearrangement of the composition
$\left(  \lambda^{t},\mu^{t}\right)  $. On the level of Young diagrams:
$\lambda+\mu$ is obtained by concatenating the columns of
$\lambda$ and $\mu$ and reordering them in descending order.
\end{remark}

\begin{prop}
[\cite{CoMG}, Lemma 7.2.5]\label{prop:LSInd} Let $\lambda$ be a partition of
$l$, and $\mu$ be a partition of $m$. Then
\[
Ind_{{\mathfrak{g}}_{l}\times{\mathfrak{g}}_{m}}^{{\mathfrak{g}}_{l+m}%
}({\mathcal{O}}_{\lambda}\times{\mathcal{O}}_{\mu})={\mathcal{O}}_{\lambda
+\mu}.
\]
\end{prop}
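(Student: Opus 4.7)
The plan is to invoke the Lusztig--Spaltenstein induction theorem (cited just above the proposition) and recognize $\cO_{\lambda+\mu}$ as $\Ind_{\g_l\times\g_m}^{\g_{l+m}}(\cO_\lambda\times\cO_\mu)$ by checking two things: that its dimension matches the Lusztig--Spaltenstein prediction, and that the preimage $pr^{-1}(\cO_\lambda\times\cO_\mu)\subset\g_{l+m}^*$ contains an explicit element of Jordan type $\lambda+\mu$. Since the induced orbit is characterized as the unique nilpotent orbit meeting this preimage in an open dense subset, equivalently the unique orbit among those meeting the preimage that attains the predicted dimension, these two checks suffice.

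First I would verify the dimensions. The nilradical $\fn$ of $\fp_{(l,m)}$ has complex dimension $lm$, so Lusztig--Spaltenstein predicts $\dim\Ind\,(\cO_\lambda\times\cO_\mu) = \dim\cO_\lambda+\dim\cO_\mu + 2lm$. Using the classical formula $\dim\cO_\nu = n^2 - \sum_i (\nu^t_i)^2$ together with the observation (in the Remark preceding the proposition) that $(\lambda+\mu)^t$ is the partition reordering of the concatenation $(\lambda^t,\mu^t)$, I obtain $\sum_i ((\lambda+\mu)^t_i)^2 = \sum_i (\lambda^t_i)^2 + \sum_j (\mu^t_j)^2$, and then the identity $(l+m)^2 - l^2 - m^2 = 2lm$ completes the dimension match.

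Next I would construct an explicit representative. Under the trace form, the preimage $pr^{-1}(\cO_\lambda\times\cO_\mu)$ corresponds to block upper-triangular matrices $\bigl(\begin{smallmatrix} X & U \\ 0 & Y\end{smallmatrix}\bigr)$ with $X\in\cO_\lambda$, $Y\in\cO_\mu$, and $U$ arbitrary. I would take $X=J_\lambda$ on $V=\C^l$ and $Y=J_\mu$ on $W=\C^m$, denote the standard Jordan bases by $\{v_{i,s}\}_{1\le s\le\lambda_i}$ and $\{w_{i,s}\}_{1\le s\le\mu_i}$ (so $Xv_{i,s}=v_{i,s-1}$ with $v_{i,0}:=0$, and analogously for $Y$), and define a fusion map $U:W\to V$ by $U(w_{i,1}) = v_{i,\lambda_i}$ and $U(w_{i,s})=0$ for $s\ge 2$. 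Iterating $Z:=\bigl(\begin{smallmatrix} X & U \\ 0 & Y\end{smallmatrix}\bigr)$ starting at $w_{i,\mu_i}$, the sequence runs first down the $Y$-chain to $w_{i,1}$, crosses over via $U$ to $v_{i,\lambda_i}$, and then runs down the $X$-chain to $v_{i,1}$, producing a single Jordan chain of length $\lambda_i+\mu_i$; taking the union of these chains over $i$ gives a basis of $V\oplus W$, so $Z\in\cO_{\lambda+\mu}$.

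The main obstacle is the construction step: one must pick the fusion map $U$ so that the resulting Jordan chains of $Z$ have exactly the right lengths and together span $V\oplus W$ with no collapse (in particular, indices $i$ where $\lambda_i=0$ or $\mu_i=0$ must be handled so that the corresponding block of $\lambda+\mu$ still appears). Once this verification is done, the dimension match together with the uniqueness clause of Lusztig--Spaltenstein forces $\cO_{\lambda+\mu}$ to be the induced orbit.
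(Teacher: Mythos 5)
The paper offers no proof of this proposition at all: it is quoted verbatim from \cite[Lemma 7.2.5]{CoMG}, so there is nothing internal to compare against. Your argument is a correct, self-contained proof, and it is essentially the standard one. The dimension bookkeeping is right: by Proposition \ref{prop:DimOrbT} and the observation that $(\lambda+\mu)^t$ is the rearrangement of the concatenation $(\lambda^t,\mu^t)$, the quantity $\sum_i(\nu^t_i)^2$ is additive, and $(l+m)^2-l^2-m^2=2lm=2\dim\fn_{(l,m)}$ matches the Lusztig--Spaltenstein formula. Your splicing representative also works exactly as claimed: with $U(w_{i,1})=v_{i,\lambda_i}$ and $U(w_{i,s})=0$ for $s\geq 2$ (and the convention $v_{i,0}=0$, which silently handles the degenerate indices with $\lambda_i=0$ or $\mu_i=0$), each chain starting at $w_{i,\mu_i}$ is literally $w_{i,\mu_i},\dots,w_{i,1},v_{i,\lambda_i},\dots,v_{i,1},0$, these chains span $Z$-invariant subspaces, and together they form a basis of $\C^{l+m}$, so $Z\in\cO_{\lambda+\mu}$ lies in $pr^{-1}(\cO_\lambda\times\cO_\mu)$. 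The only step I would ask you to make explicit is the word ``equivalently'' in your first paragraph: the theorem as quoted says only that the induced orbit $\cO_{\mathrm{ind}}$ meets the preimage $S:=pr^{-1}(\cO_\lambda\times\cO_\mu)$ in a dense open subset; from this one gets $S\subseteq\overline{\cO_{\mathrm{ind}}\cap S}\subseteq\overline{\cO_{\mathrm{ind}}}$, hence every orbit meeting $S$ lies in $\overline{\cO_{\mathrm{ind}}}$ and has dimension at most $\dim\cO_{\mathrm{ind}}$, with equality only for $\cO_{\mathrm{ind}}$ itself. With that one-line remark inserted, your two checks (the dimension match and the explicit element of type $\lambda+\mu$ in $S$) do force $\cO_{\lambda+\mu}=\cO_{\mathrm{ind}}$, so the proof is complete; what it buys over the paper's bare citation is an elementary, verifiable argument using only the dimension formula and the definition of induced orbits.
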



\begin{prop}[ \cite{CoMG}, Corollary 7.2.4.]
\label{prop:DimOrbT} If $\lambda$ is a partition of $n$ and $\nu=\lambda^{t}$
then
\[
\dim\mathcal{O}_{\lambda}=n^{2}-\sum\nolimits_{j}\nu_{j}^{2}.
\]

\end{prop}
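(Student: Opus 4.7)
The plan is to use the orbit-stabilizer theorem. Since $\mathcal{O}_\lambda$ is the $GL_n(\mathbb{C})$-orbit of the Jordan matrix $J_\lambda$ under conjugation, we have
\[
\dim \mathcal{O}_\lambda \;=\; \dim GL_n(\mathbb{C}) - \dim Z_{GL_n(\mathbb{C})}(J_\lambda) \;=\; n^2 - \dim Z_{GL_n(\mathbb{C})}(J_\lambda).
\]
The group centralizer is Zariski-open in the Lie-algebra centralizer $\mathfrak{z}(J_\lambda) = \{X \in \mathfrak{g}_n : [X,J_\lambda]=0\}$ (it is the locus where $\det \neq 0$), so the two have the same dimension. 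Hence it suffices to show $\dim \mathfrak{z}(J_\lambda) = \sum_j \nu_j^2$.

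Next I would compute $\dim \mathfrak{z}(J_\lambda)$ by a block-matrix calculation. Decompose $\mathbb{C}^n = \bigoplus V_i$ where $V_i$ is the $i$-th Jordan block of $J_\lambda$ (of size $\lambda_i$), and write $X \in \mathfrak{z}(J_\lambda)$ as $X = (X_{ij})$ with $X_{ij}\colon V_j \to V_i$. The relation $[X,J_\lambda]=0$ breaks up into the block equations $X_{ij}\,J_{(\lambda_j)} = J_{(\lambda_i)}\,X_{ij}$. A direct calculation in a Jordan basis shows that the solution space for each pair $(i,j)$ is an affine space of Toeplitz-like matrices, free of $\min(\lambda_i,\lambda_j)$ parameters. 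Summing over all pairs gives
\[
\dim \mathfrak{z}(J_\lambda) \;=\; \sum_{i,j} \min(\lambda_i,\lambda_j).
\]

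Finally I would convert this to $\sum_j \nu_j^2$ via the standard combinatorial identity based on $\nu_j = \#\{i \mid \lambda_i \geq j\}$:
\[
\sum_{i,k}\min(\lambda_i,\lambda_k) \;=\; \sum_{i,k}\#\{j : j\le \lambda_i,\ j\le \lambda_k\} \;=\; \sum_j \#\{(i,k) : \lambda_i \geq j,\ \lambda_k \geq j\} \;=\; \sum_j \nu_j^2.
\]
Combining the three steps yields the desired formula $\dim \mathcal{O}_\lambda = n^2 - \sum_j \nu_j^2$.

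The only nontrivial step is the block computation: verifying that for each pair $(i,j)$ the intertwining equation with the two nilpotent shifts cuts out exactly a $\min(\lambda_i,\lambda_j)$-dimensional affine subspace. This is routine (write $X_{ij}$ in the standard Jordan basis and observe that $X_{ij}$ is determined by its top row, with the columns shifted past the smaller of the two block sizes being forced to vanish), but it is the one place where a small bit of linear-algebra bookkeeping is required. Everything else is the orbit-stabilizer principle and a transpose-counting identity for Young diagrams.
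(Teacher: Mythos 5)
Your argument is correct. Note, though, that the paper does not prove this statement at all: it is quoted verbatim from Collingwood--McGovern (Corollary 7.2.4), so there is no internal proof to compare against. What you have written is the standard self-contained derivation: orbit--stabilizer gives $\dim\mathcal{O}_\lambda = n^2 - \dim Z_{GL_n(\C)}(J_\lambda)$, the group centralizer is the (Zariski-open, hence equidimensional) set of invertible elements of the linear commutant $\{X : XJ_\lambda = J_\lambda X\}$, the block-by-block intertwining equation $X_{ij}J_{(\lambda_j)} = J_{(\lambda_i)}X_{ij}$ cuts out a space of dimension $\min(\lambda_i,\lambda_j)$ (this is $\dim\Hom_{\C[t]}(\C[t]/(t^{\lambda_j}),\C[t]/(t^{\lambda_i}))$, realized by shifted upper-triangular Toeplitz matrices), and the identity $\sum_{i,k}\min(\lambda_i,\lambda_k)=\sum_j \nu_j^2$ with $\nu=\lambda^t$ is exactly the double-counting you describe. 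Two cosmetic points: the solution space for each block pair is a linear subspace, not an affine one, and "determined by its top row" is accurate only after specifying which of the two block sizes is smaller (top row or last column, depending on whether $\lambda_i\le\lambda_j$ or $\lambda_i\ge\lambda_j$); neither affects the count $\min(\lambda_i,\lambda_j)$, so the proof stands as a valid replacement for the citation.
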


We recall the BZ-product notation for parabolic induction as in Definition \ref{def:BZProd} and the associated partition $AP(\pi)$ as in Definition \ref{def:AP}. From Proposition
\ref{prop:LSInd} we obtain
\begin{cor}
\label{cor:BarBoz} If ${\sigma\times\tau}$ is irreducible then
\[
AP\left(  {\sigma\times\tau}\right)  =AP\left(  {\sigma}\right)  +AP\left(
{\tau}\right)  .
\]
\end{cor}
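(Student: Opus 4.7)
The plan is to combine Theorem \ref{thm:BarBoz} with Proposition \ref{prop:LSInd} essentially verbatim. Since $\sigma \times \tau = \mathrm{Ind}_{P_{\alpha}}^{G_n}(\sigma \otimes \tau)$ where $\alpha = (l,m)$ with $\sigma \in \widehat{G_l}$ and $\tau \in \widehat{G_m}$, and since by hypothesis $\sigma \times \tau$ is irreducible, Theorem \ref{thm:BarBoz} applies and tells us that the associated orbit of $\sigma \times \tau$ is precisely the induced orbit
\[
\mathcal{O}(\sigma \times \tau) = \mathrm{Ind}_{\mathfrak{l}}^{\mathfrak{g}_n}\bigl[\mathcal{O}(\sigma \otimes \tau)\bigr],
\]
where $\mathfrak{l} = \mathfrak{g}_l \times \mathfrak{g}_m$ is the Levi of $P_{\alpha}$.

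Next I would identify $\mathcal{O}(\sigma \otimes \tau)$ with the product orbit $\mathcal{O}_{AP(\sigma)} \times \mathcal{O}_{AP(\tau)}$ in $\mathfrak{l}^{\ast}$; this is immediate from the definition of the associated partition for each factor, since the annihilator variety of an outer tensor product is the product of the annihilator varieties. Then Proposition \ref{prop:LSInd} gives
\[
\mathrm{Ind}_{\mathfrak{g}_l \times \mathfrak{g}_m}^{\mathfrak{g}_{l+m}}\bigl(\mathcal{O}_{AP(\sigma)} \times \mathcal{O}_{AP(\tau)}\bigr) = \mathcal{O}_{AP(\sigma) + AP(\tau)}.
\]
Taking closures and passing to the associated partition gives the claimed equality $AP(\sigma \times \tau) = AP(\sigma) + AP(\tau)$.

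There is essentially no obstacle: the work is already done in the two cited theorems. The only mild subtlety is the identification of the annihilator variety of the tensor product $\sigma \otimes \tau$ (as a module for the Levi) with the product of the annihilator varieties, but this is standard once one notes that $U(\mathfrak{g}_l \times \mathfrak{g}_m) \cong U(\mathfrak{g}_l) \otimes U(\mathfrak{g}_m)$ and that both $AP(\sigma)$ and $AP(\tau)$ label single orbits by Joseph's theorem. Thus the proof reduces to a one-line concatenation of Theorem \ref{thm:BarBoz} and Proposition \ref{prop:LSInd}.
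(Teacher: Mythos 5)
Your proposal is correct and is exactly the argument the paper intends: the corollary is presented there as an immediate consequence of Theorem \ref{thm:BarBoz} (Borho--Brylinski) combined with Proposition \ref{prop:LSInd} (Lusztig--Spaltenstein induction for $\gl_n$), which is precisely your one-line concatenation. Your remark on identifying the annihilator variety of the outer tensor product with the product of annihilator varieties is a reasonable point to make explicit, and it is handled correctly.
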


\subsection{Unitary induction} \label{subsec:UnInd}

Let $G$ be a Lie group and let $dx$ denote a \emph{right} invariant Haar measure  then we have%
\[
\int\delta_{G}\left(  g\right)  f\left(  gx\right)  dx=\int f\left(  x\right)
dx\text{ }%
\]
where $\delta_{G}\left(  g\right)  =\left\vert \det\text{ }Ad\left(  g\right)
\right\vert $ is the modular function of $G$.

Let $H$ be a closed subgroup of $G$ and write $\delta_{H\backslash G}\left(
h\right)  =\delta_{H}\left(  h\right)  \delta_{G}\left(  h\right)  ^{-1}$. We
define $C_{c}\left(  H\backslash G,\delta_{H\backslash G}\right)  $ to be the
space of continuous functions $f\in C\left(  G\right)  $ that satisfy%
\begin{equation}
f\left(  hx\right)  =\delta_{H\backslash G}\left(  h\right)  f\left(
x\right)  \text{ for all }h\in H\text{ } \label{transf}%
\end{equation}
and for which supp$\left(  f\right)  \subseteq$ $HS$ for some compact set $S$.
Then $G$ acts on $C_{c}\left(  H\backslash G,\delta_{H\backslash G}\right)  $ by right translations
and there is a unique continuous $G$-invariant functional $\int_{H\backslash
G}$ on $C_{c}\left(  H\backslash G,\delta_{H\backslash G}\right) $ satisfying%
\[
\int_{H\backslash G}\left[  \int_{H}\delta_{H\backslash G}\left(  h\right)
^{-1}f\left(  xh\right)  dh\right]  =\int_{G}f\left(  x\right)  dx\text{ for
all }f\in C_{c}\left(  G\right)
\]
One shows that $f\mapsto\int_{H}\delta_{H\backslash G}\left(  h\right)
^{-1}f\left(  xh\right)  dh$ is a continuous surjection from $C_{c}\left(
G\right)  $ to $C_{c}\left(  H\backslash G\right)  $ whose kernel is densely
spanned by functions $f\left(  x\right)  -\delta_{G}\left(  h\right)  f\left(
hx\right)  $. Consequently $\int_{G}dx$ vanishes on the kernel and descends to
a functional on $C_{c}\left(  H\backslash G,\delta_{H\backslash G}\right)$.

If $(\sigma,V)$ is a unitary representation of $H$, we define $C_{c}\left(
H\backslash G,\delta_{H\backslash G}^{1/2}\sigma\right)  $ to be the set of
continuous functions $f\colon G\rightarrow V$ that satisfy\ %

\begin{equation}
f\left(  hg\right)  =\delta_{H\backslash G}^{1/2}\left(  h\right)
\sigma\left(  h\right)  f\left(  g\right)  \label{eq:IndTwidle}%
\end{equation}
and for which supp$\left(  f\right)  \subseteq$ $HS$ for some compact set $S$.
For such $f$ we have $\left\Vert f\left(  x\right)  \right\Vert _{V}^{2}\in
C_{c}\left(  H\backslash G,\delta_{H\backslash G}\right)  $ and we define $W$
to be the closure of $C_{c}\left(  H\backslash G,\delta_{H\backslash G}%
^{1/2}\sigma\right)  $ with respect to the norm $\int_{H\backslash
G}\left\Vert f\left(  x\right)  \right\Vert _{V}^{2}$. Then $W$ is a Hilbert
space under $\left\langle f_{1},f_{2}\right\rangle =\int_{H\backslash
G}\left\langle f_{1}\left(  g\right)  ,f_{2}\left(  g\right)  \right\rangle
_{V}$ and the action of $G$ by right translations defines a unitary
representation $\left(  \pi,W\right)  $ of $G$.

\begin{definition}
\label{def:UnInd} In the above situation $\left(  \pi,W\right)  $ is called
the unitarily induced representation and denoted by $Ind_{H}^{G}(\sigma)$.
\end{definition}

\subsection{Analytic and smooth vectors}

\label{subsec:Analvec}

\begin{defn}
Let $M$ be an analytic manifold and $B$ be a Banach space. A map $M\rightarrow
B$ is said to be \emph{smooth} if it is infinitely differentiable, and
\emph{analytic} if its Taylor series at every point has a positive radius of convergence.
\end{defn}

\begin{rem}
A map $f \colon M\rightarrow B$ is smooth/analytic if and only if the composition
$\phi\circ f$ is smooth/analytic for every continuous linear functional
$\phi \colon B\rightarrow{\mathbb{C}}$ .
\end{rem}

\begin{defn}
Let $\left(  \sigma,B\right)  $ be a continuous Banach representation of a Lie
group $G$. A vector $v\in B$ is called smooth/analytic if the action map
$G\rightarrow B$ defined by $g\mapsto\sigma(g)v$ is smooth/analytic. Both $G$
and its Lie algebra ${\mathfrak{g}}$ act on the spaces of smooth and analytic
vectors and we denote the corresponding representations by $\left(
\sigma^{\infty},B^{\infty}\right)  $ and $\left(  \sigma^{\omega},B^{\omega
}\right)  $ respectively. $\left(  \sigma^{\infty},B^{\infty}\right)  $ is
naturally a \Fre representation of $G$.
\end{defn}

\begin{rem}
\label{rem:dense} By \cite[Theorem 4]{Nel}, $B^{\omega}$ is dense in
$B$.
\end{rem}

The following theorem by Poulsen can be interpreted as a
representation-theoretic version of Sobolev's embedding theorem.

\begin{thm}
[\cite{Pou}, Theorem 5.1 and Corollary 5.2]\label{thm:Pou} Let $\left(
\pi,W\right)  =Ind_{H}^{G}(\sigma)$ ; if $f\in W^{\infty}$ ($W^{\omega}$) then
$f$ is a smooth (analytic) function from $G$ to $V$.
\end{thm}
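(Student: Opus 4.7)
The plan is to show that an abstract smooth (resp.\ analytic) vector $f\in W^\infty$ (resp.\ $W^\omega$), a priori only an equivalence class of $V$-valued sections on $H\backslash G$, admits a genuinely smooth (resp.\ analytic) representative $G\to V$ satisfying the transformation law (\ref{eq:IndTwidle}). The core mechanism is regularization by convolution on $G$.

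For the smooth case, I would first observe that for $\phi\in C_c^\infty(G)$ and $h\in W$, the smoothed vector $\pi(\phi)h=\int_G\phi(g)\pi(g)h\,dg$ admits the pointwise representative
\[
(\pi(\phi)h)(x)=\int_G\phi(g)h(xg)\,dg,
\]
where the integral converges in $V$ for almost every $x$ thanks to the compact support of $\phi$ and local $L^2$-integrability of $h$. Differentiation under the integral sign using right-invariant vector fields on $G$ shows this representative is smooth on $G$, and a Fubini-type rearrangement shows it satisfies (\ref{eq:IndTwidle}). The Dixmier--Malliavin theorem then writes any $f\in W^\infty$ as a finite sum $\sum\pi(\phi_i)h_i$, so $f$ itself is smooth as a function.

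For the analytic case, the convolution kernels must themselves be analytic. I would use the heat semigroup $e^{-t\Delta}$ associated with a left-invariant elliptic Laplacian $\Delta=-\sum X_i^2$ built from a basis of $\g$. Its kernel $p_t$ is real-analytic on $G$, and the same differentiation-under-the-integral argument shows that $\pi(p_t)h$ has an analytic representative for every $h\in W$ and $t>0$. One then argues that every $f\in W^\omega$ lies in the range of $\pi(p_t)$ applied to a vector in $W$ for some small $t>0$, or equivalently translates the Hilbert-space bound $\|X_{i_1}\cdots X_{i_n}f\|_W\le C^{n+1}n!$ (characterizing analyticity of the orbit map) directly into pointwise analytic Taylor convergence in a chart.

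The main obstacle is the analytic case: abstract analyticity controls Taylor expansions in Hilbert norm, whereas pointwise analyticity requires uniform Taylor convergence on an open neighborhood in $G$ with sharp factorial growth. Bridging this gap needs a Sobolev-type embedding converting $L^2$-derivative bounds to sup-norm bounds with controlled constants, combined with careful combinatorial bookkeeping so that the factorial growth underlying analyticity is preserved when one interchanges derivatives in the ambient $G$-direction with the transformation law on $H\backslash G$. This step is the technical heart of Poulsen's argument and the place where induced representation structure (as opposed to the general Banach case) is essential.
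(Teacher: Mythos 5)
First, note that the paper does not prove this statement at all: it is quoted from Poulsen \cite{Pou}, so your proposal can only be measured against Poulsen's argument. Your treatment of the smooth case is correct and is a genuinely different (more modern) route: regularizing by $\pi(\phi)$ with $\phi\in C_c^\infty(G)$, checking that $\pi(\phi)h$ has the smooth pointwise representative $x\mapsto\int_G\phi(g)h(xg)\,dg$ (local square-integrability of $\|h\|_V$ plus differentiation under the integral), and then invoking Dixmier--Malliavin to write every $f\in W^\infty$ as a finite sum $\sum\pi(\phi_i)h_i$. Poulsen instead argues via local elliptic regularity/Sobolev embedding for the $V$-valued function $f$; your route is shorter but uses a later theorem, while his is quantitative -- which is exactly what the analytic half requires.

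The analytic case is where you have a genuine gap. The mechanism you propose -- that every $f\in W^\omega$ lies in the range of the heat operator $\pi(p_t)$ for some $t>0$ -- is false, already for $G=\mathbb{R}$ acting on $L^2(\mathbb{R})$ by translation: analytic vectors are those with $\int|\widehat{f}(\xi)|^2e^{2s|\xi|}\,d\xi<\infty$ for some $s>0$, whereas the range of $e^{-t\Delta}$ requires Gaussian decay $e^{-t\xi^2}$; e.g.\ $\widehat{f}(\xi)=e^{-|\xi|}$ gives an analytic vector not of the form $\pi(p_t)h$. Spectrally, $W^\omega$ is the union of the ranges of the \emph{Poisson} semigroup $e^{-t\Delta^{1/2}}$ (Goodman--Nelson), not of the heat semigroup, and repairing your argument along these lines would further require analyticity of the Poisson kernel and convergence/analyticity of the resulting non-compactly-supported convolution -- none of which you address. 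Your fallback ("translate the bound $\|X_{i_1}\cdots X_{i_n}f\|\le C^{n+1}n!$ into pointwise Taylor convergence via a Sobolev-type embedding with factorial control") is not "equivalent" to the semigroup claim; it is precisely the content of Poulsen's proof, and you explicitly leave it unproven. So the analytic half of the theorem -- which is the half the paper actually relies on (the non-degeneracy step in Lemma \ref{lem:IndPairing} uses that an analytic $f$ vanishing to infinite order at $e$ vanishes on the connected component) -- is not established by your proposal.
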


\begin{cor}
Let $\left(  \pi,W\right)  =Ind_{H}^{G}(\sigma)$ be as above; if $f\in
W^{\infty}$ ($W^{\omega}$) then $f$ takes values in $V^{\infty}$ ($V^{\omega}$).
\end{cor}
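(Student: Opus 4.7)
The plan is to read off the desired vector-valued smoothness/analyticity directly from the transformation rule \eqref{eq:IndTwidle}, using Theorem \ref{thm:Pou} as a black box.

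Fix $g\in G$ and any $f$ in $W^{\infty}$ (respectively $W^{\omega}$). By Theorem \ref{thm:Pou}, $f\colon G\to V$ is a smooth (respectively analytic) map. First I would observe that, since left multiplication $L_g\colon H\to G$, $h\mapsto hg$, is analytic, the composition $h\mapsto f(hg)$ is a smooth (respectively analytic) map $H\to V$. Next, by the defining transformation law
\[
f(hg)=\delta_{H\backslash G}^{1/2}(h)\,\sigma(h)f(g),
\]
this composition equals $\delta_{H\backslash G}^{1/2}(h)\,\sigma(h)f(g)$.

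Now $\delta_{H\backslash G}^{1/2}\colon H\to\R_{>0}$ is an analytic (in fact real-analytic) character, so multiplication by its reciprocal preserves smoothness and analyticity of $V$-valued maps on $H$. Dividing both sides by $\delta_{H\backslash G}^{1/2}(h)$ therefore yields that the map $h\mapsto\sigma(h)f(g)$ is smooth (respectively analytic) from $H$ to $V$. By definition this means $f(g)\in V^{\infty}$ (respectively $f(g)\in V^{\omega}$), which is exactly the claim.

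I do not anticipate a genuine obstacle here, since Theorem \ref{thm:Pou} has already done the substantive work of upgrading the $L^{2}$-type regularity of $f$ to honest pointwise regularity as a $V$-valued map; the only remaining task is the elementary manipulation above. The one minor point worth mentioning is that, in the analytic case, one should verify that multiplication by $\delta_{H\backslash G}^{1/2}$ preserves analyticity of a $V$-valued map, but this follows from the remark that a $V$-valued map is analytic if and only if its composition with every continuous linear functional on $V$ is analytic, together with the fact that the product of two analytic scalar functions is analytic.
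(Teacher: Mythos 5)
Your proposal is correct and follows essentially the same route as the paper: both evaluate the transformation law $f(hg)=\delta_{H\backslash G}^{1/2}(h)\sigma(h)f(g)$ at a fixed $g$, invoke Theorem \ref{thm:Pou} to get pointwise smoothness/analyticity of $f$, and conclude that the $H$-action map on $f(g)$ is smooth (analytic). Your extra remarks on composing with $h\mapsto hg$ and on multiplication by $\delta_{H\backslash G}^{\pm 1/2}$ merely spell out details the paper leaves implicit.
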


\begin{proof}
Let $f\in W^{\infty}$ ($W^{\omega}$) and let $v:=f(g)$, then the action map of
$H$ on $v$ is
\[
h\mapsto\sigma\left(  h\right)  v=\sigma\left(  h\right)  f\left(  g\right)
=\delta_{H\backslash G}^{-1/2}\left(  h\right)  f\left(  hg\right)
\]
This is a smooth (analytic) map by Poulsen's theorem; hence $v$ is in
$V^{\infty}$ ($V^{\omega}$).
\end{proof}

\begin{cor}
\label{cor:IndFunc} Let $\left(  \pi,W\right)  =Ind_{H}^{G}(\sigma)$; then
$f\mapsto f(e)$ defines a continuous $H$-equivariant morphism $\left(  \pi^{\infty
},W^{\infty}\right)  \twoheadrightarrow\left(  \widetilde{\sigma}^{\infty
},V^{\infty}\right)  ,$ where $\widetilde{\sigma}\left(  h\right)
=\delta_{H\backslash G}^{1/2}\left(  h\right)  \sigma\left(  h\right)  $ as in
(\ref{eq:IndTwidle}) above.
\end{cor}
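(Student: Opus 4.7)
The plan is to verify, in order, the well-definedness of $\operatorname{ev}_e(f) := f(e)$ as a map into $V^\infty$, its $H$-equivariance, its continuity, and finally its surjectivity, leaning on the preceding corollary and Poulsen's Theorem \ref{thm:Pou}.

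Well-definedness is immediate: the preceding corollary gives $f(g) \in V^\infty$ for every $f \in W^\infty$ and $g \in G$, in particular for $g=e$. Equivariance is a direct rewriting of the defining transformation law (\ref{eq:IndTwidle}): setting $g = e$ yields $(\pi(h) f)(e) = f(h) = \delta_{H\backslash G}^{1/2}(h)\sigma(h) f(e) = \widetilde{\sigma}(h)\operatorname{ev}_e(f)$.

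For continuity, recall that $W^\infty$ is a \Fre space under the seminorms $\|\pi(U)\,\cdot\,\|_W$ for $U \in U(\mathfrak{g})$, and $V^\infty$ under $\|\sigma(U')\,\cdot\,\|_V$ for $U' \in U(\mathfrak{h})$. Poulsen's theorem implies that $\operatorname{ev}_e : W^\infty \to V$ is continuous. To upgrade the target to $V^\infty$, I would differentiate the identity $\sigma(h) f(e) = \delta_{H\backslash G}^{-1/2}(h) f(h)$ along $\exp(tX)$ for $X \in \mathfrak{h}$; iterating expresses $\sigma(U') f(e)$ as $\operatorname{ev}_e(\pi(\widetilde U) f)$ plus lower-order terms involving derivatives of $\delta_{H\backslash G}^{-1/2}$ at $e$, for a suitable $\widetilde U \in U(\mathfrak{h}) \subset U(\mathfrak{g})$, and the required seminorm bound follows.

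Surjectivity is the substantive step. Given $v \in V^\infty$, I would construct an explicit preimage using a transversal slice at $e$. Fix a complement $\mathfrak{m}$ of $\mathfrak{h}$ in $\mathfrak{g}$ and set $S := \exp(U)$ for a sufficiently small open neighborhood $U \subset \mathfrak{m}$ of $0$, so that $(h, s) \mapsto hs$ is a diffeomorphism of $H \times S$ onto an open subset $HS$ of $G$. Choose $\phi \in C_c^\infty(S)$ with $\phi(e) = 1$ and compact support, and define $f(hs) := \delta_{H\backslash G}^{1/2}(h)\sigma(h)[\phi(s) v]$ on $HS$ and $f \equiv 0$ outside $H \cdot \operatorname{supp}(\phi)$. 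Because $H$ is closed in $G$ and $\operatorname{supp}(\phi)$ is compact in $S$, the set $H \cdot \operatorname{supp}(\phi)$ is closed in $G$, so $f$ is globally smooth; by construction it satisfies (\ref{eq:IndTwidle}), has compact support modulo $H$, and $f(e) = v$. The main obstacle is to upgrade this $f$ from a smooth compactly-supported-modulo-$H$ section with values in $V^\infty$ to a genuine element of the \Fre space $W^\infty$ --- a standard but non-trivial comparison of topologies on smooth sections of the induced bundle.
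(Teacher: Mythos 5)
Your treatment of well-definedness, equivariance, and continuity coincides with the paper's: the paper simply invokes the preceding corollary (that $f\in W^{\infty}$ takes values in $V^{\infty}$) together with \cite[Lemma 5.2]{Pou} for continuity of the evaluation, and your plan of differentiating the identity $\sigma(h)f(e)=\delta_{H\backslash G}^{-1/2}(h)f(h)=\delta_{H\backslash G}^{-1/2}(h)\left(\pi(h)f\right)(e)$ along $\mathfrak{h}$ to control the $V^{\infty}$-seminorms by seminorms of the form $\lVert \pi(u)f\rVert$ is exactly the routine upgrade hiding behind that citation. So for everything the paper actually proves, your route is the same.

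Where you go beyond the paper is surjectivity, which the paper asserts through the symbol $\twoheadrightarrow$ but never argues (and never really uses: in Proposition \ref{prop:pi2Api} only non-vanishing of the evaluation map is needed, density of the image then coming from irreducibility of $(A\pi)^{\infty}$). Your slice construction $f(hs)=\delta_{H\backslash G}^{1/2}(h)\sigma(h)\left[\phi(s)v\right]$ is the right one and does produce a smooth, compactly supported modulo $H$, transformation-law-respecting function with $f(e)=v$; but the step you defer --- that this $f$ is in fact a \emph{smooth vector} of the unitary representation $W$ --- is the entire content of surjectivity, so as written your proposal does not yet establish the $\twoheadrightarrow$. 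The missing verification is true and short, and you should close it explicitly: either quote Poulsen's description of smooth vectors of induced representations (Theorem 5.1 of \cite{Pou}, i.e.\ Theorem \ref{thm:Pou} and its surroundings), where compact support modulo $H$ makes the required square-integrability of $f$ and all its derivatives over $H\backslash G$ automatic; or argue directly that for $X\in\mathfrak{g}$ the difference quotients $t^{-1}\left(\pi(\exp tX)f-f\right)$ converge uniformly with supports contained in a fixed set $H\cdot K$ with $K$ compact, hence converge in the norm of $W$, and iterate. With either patch your argument is complete and in fact records more than the paper does.
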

The continuity of the evaluation morphism follows from \cite[Lemma 5.2]{Pou}.

\begin{definition}
Let $V_{1},V_{2}$ be two modules for a Lie algebra ${\mathfrak{g}}$; we say
$V_{1},V_{2}$ are non-degenerately ${\mathfrak{g}}$-paired if there is a
non-degenerate bilinear pairing $\left\langle \cdot , \cdot \right\rangle $ that is
invariant in the Lie algebra sense, \emph{i.e.}
\[
\left\langle Xv_{1},v_{2}\right\rangle =-\left\langle v_{1},Xv_{2}%
\right\rangle \text{ for all }v_{1}\in V_{1},v_{2}\in V_{2},X\in{\mathfrak{g}}%
\]

\end{definition}

\begin{lemma}
\label{lem:IndPairing} Let $\left(  \pi,W\right)  =Ind_{H}^{G}(\sigma)$;
suppose $G/H$ is connected and $\widetilde{\sigma}^{\omega}$is
non-degenerately ${\mathfrak{h}}$-paired with an ${\mathfrak{h}}$-module
$\tau$. Then $\pi^{\omega}$ is non-degenerately ${\mathfrak{g}}$-paired with a
quotient of $U({\mathfrak{g}})\otimes_{U({\mathfrak{h}})}\tau$.
\end{lemma}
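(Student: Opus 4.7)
My plan is to construct the pairing by combining evaluation at the identity with the action of $U(\mathfrak{g})$ on analytic vectors, and then cut down on the right to make it non-degenerate. Specifically, for $f\in\pi^{\omega}$, $X\in U(\mathfrak{g})$, and $t\in\tau$, I would define
\[
B(f,\,X\otimes t):=\langle (\pi(\iota(X))f)(e),\,t\rangle,
\]
where $\iota$ is the principal anti-automorphism of $U(\mathfrak{g})$ (so $\iota(X)=-X$ on $\mathfrak{g}$) and the inner pairing is the given $\mathfrak{h}$-invariant pairing of $\widetilde{\sigma}^{\omega}$ with $\tau$. This is well defined pointwise because $\pi(\iota(X))f\in W^{\omega}$ and evaluation at $e$ maps $W^{\omega}$ into $V^{\omega}=\widetilde{\sigma}^{\omega}$ by Corollary \ref{cor:IndFunc}.

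Next I would verify that $B$ descends to $U(\mathfrak{g})\otimes_{U(\mathfrak{h})}\tau$ and is $\mathfrak{g}$-invariant. The key technical input is that for any $g\in W^{\omega}$ and $Y\in\mathfrak{h}$, differentiating the covariance relation \eqref{eq:IndTwidle} at the identity gives $(\pi(Y)g)(e)=\widetilde{\sigma}(Y)g(e)$. Combined with $\mathfrak{h}$-invariance of $\langle\cdot,\cdot\rangle$, this yields $B(f,XY\otimes t)=B(f,X\otimes Yt)$ for $Y\in\mathfrak{h}$. A direct calculation using $\iota(ZX)=-\iota(X)Z$ for $Z\in\mathfrak{g}$ then shows $B(\pi(Z)f,X\otimes t)=-B(f,ZX\otimes t)$, i.e.\ $B$ is invariant in the Lie algebra sense.

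Having a $\mathfrak{g}$-invariant pairing, let $N\subset U(\mathfrak{g})\otimes_{U(\mathfrak{h})}\tau$ be its right radical; invariance forces $N$ to be a $\mathfrak{g}$-submodule, so $B$ descends to a pairing of $\pi^{\omega}$ with the quotient that is non-degenerate on the right by construction. For non-degeneracy on the left, suppose $B(f,\cdot)\equiv 0$. Non-degeneracy of the $\widetilde{\sigma}^{\omega}$--$\tau$ pairing gives $(\pi(X)f)(e)=0$ for every $X\in U(\mathfrak{g})$. Viewing $f$ as an analytic $V$-valued function on $G$ via Theorem \ref{thm:Pou}, this says all iterated directional derivatives of $f$ at $e$ vanish, so by the identity theorem for Banach-valued analytic functions, $f$ vanishes on the identity component $G_{0}$. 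Since $G/H$ is connected and $G_{0}H$ is an open subgroup of $G$ whose image in $G/H$ is both open and closed, we get $G_{0}H=G$, and then \eqref{eq:IndTwidle} forces $f\equiv 0$.

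The main point requiring care will be the bookkeeping that forces the introduction of $\iota$: without the anti-automorphism, the definition of $B$ does not simultaneously give well-definedness over $U(\mathfrak{h})$ and $\mathfrak{g}$-invariance. The other place to be careful is the propagation of vanishing from $e$ to all of $G$, which is exactly where the hypothesis that $G/H$ is connected is used; the argument itself is routine once phrased as $G_{0}H=G$.
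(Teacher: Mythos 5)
Your proposal is correct and follows essentially the same route as the paper: the pairing $\langle f,u\otimes v\rangle=\langle (u'f)(e),v\rangle$ via the principal anti-automorphism, descent over $U(\mathfrak{h})$ from the differentiated covariance relation, left non-degeneracy from analyticity (Theorem \ref{thm:Pou}) plus connectedness of $G/H$, and finally quotienting by the right radical. The only additions are your explicit check of $\mathfrak{g}$-invariance and the $G_{0}H=G$ phrasing, both of which the paper leaves implicit.
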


Here and elsewhere ${\mathfrak{g}}$, ${\mathfrak{h}}$ denote the Lie algebras
of $G$, $H$ and $U({\mathfrak{g}})$, $U({\mathfrak{h}})$ their enveloping algebras.

\begin{proof}
As noted above ${\mathfrak{g}}$ and hence $U({\mathfrak{g}})$ acts on
$W^{\omega}$. Let $u\mapsto u^{\prime}$ be the principal anti-automorphism of
$U({\mathfrak{g}})$ extending $X\mapsto-X$ on ${\mathfrak{g}}$, and define a
pairing between $\pi^{\omega}$ and $U({\mathfrak{g}})\otimes\tau$ by
$\left\langle f,u\otimes v\right\rangle :=\left\langle (u^{\prime
}f)(e),v\right\rangle $. The pairing descends to $U({\mathfrak{g}}%
)\otimes_{U({\mathfrak{h}})}\tau$ in the second variable since if $X$ is in
${\mathfrak{h}}$ then%
\[
\left\langle f,uX\otimes v\right\rangle =-\left\langle \left[  \pi^{\omega
}\left(  X\right)  u^{\prime}f\right]  \left(  e\right)  ,v\right\rangle
=-\left\langle \widetilde{\sigma}^{\omega}\left(  X\right)  \left[  u^{\prime
}f\left(  e\right)  \right]  ,v\right\rangle =\left\langle u^{\prime}f\left(
e\right)  ,\tau\left(  X\right)  v\right\rangle =\left\langle f,u\otimes
Xv\right\rangle
\]

Let us check that the pairing is non-degenerate in the first variable. If $f$
lies in the left kernel, then $\left\langle (u^{\prime}f)(e),v\right\rangle
=0$ for any $u$, $v$ and hence $f$ vanishes at $e$ together with all its
derivatives. By Theorem \ref{thm:Pou}, $f$ is an analytic function and
therefore $f$ vanishes in the connected component of $e$. Since the support of
$f$ is $H$-invariant and $G/H$ is connected this implies $f=0$.

Now if the pairing is degenerate in the second variable, we quotient
$U({\mathfrak{g}})\otimes_{U({\mathfrak{h}})}\tau$ by the right kernel.
\end{proof}



\subsection{Gelfand-Kirillov dimension}

\label{subsec:Filt}

Let $A$ be a finitely generated associative algebra over $\C$ with increasing filtration $\left\{
F^{i}A \colon i\geq0\right\}  $, and let $M$ be a finitely generated $A$-module.
Choose a finite dimensional generating subspace $S$, define a filtration of
$M$ by $F^{i}M:=\left(  F^{i}A\right)  S$. It is known that there exists a polynomial $p$ such that $p(i):=\dim F^{i}M$ for large enough $i$,
and that the degree of $p$ does not depend on the choice of the
finite dimensional generating subspace $S$. This degree is called the Gelfand-Kirillov
dimension of $M$ and denoted by $\operatorname{GKdim}(M)$.

We will apply this in particular to the case when $A$ is the universal
enveloping algebra $U({\mathfrak{g}})$ for some complex Lie
algebra\ ${\mathfrak{g}}$, equipped with the usual filtration inherited from
the tensor algebra $T\left(  {\mathfrak{g}}\right)  $.

\begin{lem}
[\cite{Vog}, Lemma 2.3]\label{lem:VogTensor} Let ${\mathfrak{h}}%
\subset{\mathfrak{g}}$ be Lie algebras, and let $M$ be a finitely generated
$U({\mathfrak{h}})$ module. Then $N=U({\mathfrak{g}})\otimes_{U({\mathfrak{h}%
})}M$ is a finitely generated $U({\mathfrak{g}})$-module, and we have
\[
\operatorname{GKdim}(N)=\operatorname{GKdim}(M)+\dim({\mathfrak{g}%
}/{\mathfrak{h}})
\]

\end{lem}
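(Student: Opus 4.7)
The plan is to reduce the statement to a filtered-vector-space computation via the Poincaré--Birkhoff--Witt theorem. Choose a vector-space complement $\mathfrak{c}$ of $\mathfrak{h}$ in $\mathfrak{g}$ and set $e := \dim \mathfrak{c} = \dim(\mathfrak{g}/\mathfrak{h})$. PBW gives a filtration-preserving isomorphism $U(\mathfrak{g}) \cong S(\mathfrak{c}) \otimes_{\mathbb{C}} U(\mathfrak{h})$ of right $U(\mathfrak{h})$-modules, meaning
\[
F^i U(\mathfrak{g}) = \sum_{j+k \leq i} S^{\leq j}(\mathfrak{c}) \cdot F^k U(\mathfrak{h}).
\]
Tensoring with $M$ over $U(\mathfrak{h})$ yields an isomorphism of vector spaces $N \cong S(\mathfrak{c}) \otimes_{\mathbb{C}} M$. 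If $S_0 \subset M$ is a finite-dimensional $U(\mathfrak{h})$-generating subspace, then $1 \otimes S_0$ generates $N$ over $U(\mathfrak{g})$, settling the finite-generation claim.

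Next I would set up compatible filtrations $F^k M := F^k U(\mathfrak{h}) \cdot S_0$ and $F^i N := F^i U(\mathfrak{g}) \cdot (1 \otimes S_0)$, and verify, using PBW, the identity
\[
F^i N \;=\; \sum_{j+k \leq i} S^{\leq j}(\mathfrak{c}) \otimes F^k M
\]
inside the vector-space model $S(\mathfrak{c}) \otimes_{\mathbb{C}} M$. The inclusion $\supseteq$ is just the action formula $(u_c u_h)\cdot(1 \otimes s) = u_c \otimes (u_h s)$ for $u_c \in S^{\leq j}(\mathfrak{c})$, $u_h \in F^k U(\mathfrak{h})$, $s \in S_0$; the reverse inclusion follows because every element of $F^i U(\mathfrak{g})$ decomposes as such a sum with $j+k \leq i$.

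For the upper bound, $F^i N \subseteq S^{\leq i}(\mathfrak{c}) \otimes F^i M$, hence
\[
\dim F^i N \;\leq\; \binom{i+e}{e} \cdot \dim F^i M,
\]
and the right-hand side is eventually a polynomial of degree $d + e$ where $d := \operatorname{GKdim}(M)$. For the lower bound, I would observe
\[
F^i N \;\supseteq\; S^{\leq \lfloor i/2 \rfloor}(\mathfrak{c}) \otimes F^{\lfloor i/2 \rfloor} M,
\]
whose dimension, being the full tensor product on the right-hand side of the PBW identification, equals $\binom{\lfloor i/2 \rfloor + e}{e} \cdot \dim F^{\lfloor i/2 \rfloor} M$ and grows like a positive constant times $i^{d+e}$. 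The matching degrees force $\operatorname{GKdim}(N) = d + e$.

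The main technical point is the filtered PBW identity for $F^i N$: one must check that the identification $N \cong S(\mathfrak{c}) \otimes_{\mathbb{C}} M$ transports the $F^i U(\mathfrak{g})$-filtration on the left precisely to the bigraded sum on the right, so that the lower bound is genuinely realized as a subspace of the claimed dimension rather than collapsing under relations. Once the PBW bookkeeping is clean, the rest is an elementary polynomial estimate, and independence from the choice of $S_0$ is automatic since $\operatorname{GKdim}$ does not depend on the generating subspace.
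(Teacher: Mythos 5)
Your argument is correct. The paper itself gives no proof of this lemma --- it is quoted verbatim from Vogan's paper \cite{Vog} --- and your filtered PBW argument (writing $U({\mathfrak{g}})\cong S({\mathfrak{c}})\otimes_{\mathbb{C}}U({\mathfrak{h}})$ as filtered right $U({\mathfrak{h}})$-modules, transporting the filtration to $N\cong S({\mathfrak{c}})\otimes_{\mathbb{C}}M$, and sandwiching $\dim F^iN$ between $\binom{\lfloor i/2\rfloor+e}{e}\dim F^{\lfloor i/2\rfloor}M$ and $\binom{i+e}{e}\dim F^iM$) is essentially the standard proof given there, so there is nothing to add beyond the bookkeeping you already flagged.
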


If $\left(  \sigma,M\right)  $ is a finitely generated ${\mathfrak{g}}%
$-module, we will sometimes write $\operatorname{GKdim}(\sigma)$ for
$\operatorname{GKdim}(M)$ etc. Recall the annihilator variety $\mathcal{V}%
(\sigma)\subset{\mathfrak{g}}^{\ast}$ as defined in \S\S\S \ref{subsubsec:AnnVar}. It is easy to
show that
\[
\operatorname{GKdim}(\sigma)\leq\dim\mathcal{V}(\sigma).
\]
There is also a bound in the other direction for the Lie algebras we consider
in this paper.

\begin{thm}
[Gabber-Joseph]\label{thm:GabJo} Let ${\mathfrak{g}}$ be the Lie algebra of an
algebraic group over an algebraically closed field. Let $M$ be a finitely
generated $U({\mathfrak{g}})$ module. Then
\[
\operatorname{GKdim}(M)\geq\frac{1}{2}\dim\mathcal{V}(M).
\]

\end{thm}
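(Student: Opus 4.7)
The plan is to combine Gabber's integrability theorem with the symplectic geometry of coadjoint orbits on $\g^{*}$. First I would choose a good filtration on $M$, so that $\operatorname{gr} M$ is a finitely generated graded module over $S(\g)=\operatorname{gr} U(\g)$. Standard commutative algebra then yields $\operatorname{GKdim}(M)=\dim V$, where $V:=\operatorname{Supp}_{S(\g)}(\operatorname{gr} M)$ is the singular support, cut out by the ideal $\operatorname{Ann}_{S(\g)}(\operatorname{gr} M)\subset S(\g)$. Since $\operatorname{gr}(\operatorname{Ann}_{U(\g)}M)\subseteq \operatorname{Ann}_{S(\g)}(\operatorname{gr} M)$, we automatically get $V\subseteq \mathcal{V}(M)$, and the task is to show that this containment cannot lose more than a factor of two in dimension.

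Second, I would invoke Gabber's theorem: the radical of $\operatorname{Ann}_{S(\g)}(\operatorname{gr} M)$ is a Poisson ideal for the Kirillov--Kostant--Souriau bracket on $S(\g)$ (the classical limit of the commutator on $U(\g)$). Equivalently, $V$ is an involutive subvariety of $\g^{*}$: at each smooth point $x\in V$ lying on a coadjoint orbit $\mathcal{O}_{x}$, the tangent space $T_{x}V$ is coisotropic inside the symplectic vector space $T_{x}\mathcal{O}_{x}$. This is the non-formal ingredient of the argument and the only place where the commutator on $U(\g)$ really enters.

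Third, I would use the algebraicity hypothesis to ensure that the coadjoint action of the underlying algebraic group on $\g^{*}$ has locally closed orbits, and by Joseph's theorem $\mathcal{V}(M)$ is stable under this action, hence a finite union of coadjoint orbit closures. Let $\mathcal{O}$ be an orbit of maximal dimension occurring in $\mathcal{V}(M)$, so $\dim \mathcal{O}=\dim \mathcal{V}(M)$. The step I expect to be the main obstacle is producing an irreducible component $V_{0}$ of $V$ whose closure is $\overline{\mathcal{O}}$; this follows from a refinement of the relation between $\operatorname{gr}(\operatorname{Ann} M)$ and $\operatorname{Ann}(\operatorname{gr} M)$, which after passing to a suitable cyclic submodule realizing the full annihilator variety forces equality of radicals on top-dimensional components. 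Granting this, $V_{0}\cap \mathcal{O}$ is open and dense in $V_{0}$ and is a coisotropic subvariety of the symplectic leaf $\mathcal{O}$ by the involutivity of $V$, so $\dim V_{0}\geq \tfrac{1}{2}\dim \mathcal{O}$, giving
\[
\operatorname{GKdim}(M)=\dim V \;\geq\; \dim V_{0} \;\geq\; \tfrac{1}{2}\dim \mathcal{O} \;=\; \tfrac{1}{2}\dim \mathcal{V}(M),
\]
as claimed.
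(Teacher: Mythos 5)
Your first two steps are sound: for a good filtration one has $\GKdim(M)=\dim V$ with $V=\Supp(\operatorname{gr}M)$, and Gabber's integrability theorem makes $V$ coisotropic along coadjoint orbits. The genuine gap is the bridge from $V$ to $\cV(M)$. The claim that some irreducible component $V_0$ of $V$ has closure $\overline{\cO}$ is false: take $M$ a Verma module of $\mathfrak{sl}_2(\C)$. Then $M$ is cyclic, $\operatorname{Ann}(M)$ is generated by a maximal ideal of the center, so $\cV(M)$ is the two-dimensional nilpotent cone, while $V$ is a single line; no component of $V$ is dense in $\cV(M)$, and the radicals of $\operatorname{gr}(\operatorname{Ann}M)$ and $\operatorname{Ann}(\operatorname{gr}M)$ differ exactly on the top-dimensional part, contradicting the "equality of radicals on top-dimensional components" you invoke. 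The reduction you propose is also problematic in itself: no cyclic submodule need realize $\operatorname{Ann}(M)$, since the annihilator is the intersection of the annihilators of all cyclic submodules (one can reduce to cyclic modules, but by a different bookkeeping, and the example above is already cyclic). Note, finally, that if your claim were true you would conclude $\GKdim(M)\geq\dim\cV(M)$, which is false and which also explains why the coisotropy estimate becomes redundant in your own last display.

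What your scheme actually requires is the nontrivial statement $\cV(M)\subseteq\overline{G\cdot V}$ (the annihilator variety is contained in the $G$-saturation of the characteristic variety). Granting it, coisotropy used twice at a generic smooth point $x$ of a component $V_0$ gives $\dim\overline{G\cdot V_0}\leq\dim V_0+\dim(G\cdot x)-\dim(V_0\cap G\cdot x)\leq\dim V_0+\tfrac12\dim(G\cdot x)\leq 2\dim V_0$, which proves the theorem without ever assuming that $\cV(M)$ is a finite union of orbit closures -- an assumption that fails for general algebraic $\g$ (already for abelian $\g$ all orbits are points, so your bound via $\tfrac12\dim\cO$ would collapse to $0$). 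This bridge is precisely where the algebraicity hypothesis must enter, through the $G$-stability of $\operatorname{Ann}(M)$ and a comparison with the translates of the annihilator of a generator; it cannot be avoided, because Gabber's involutivity holds for every finite-dimensional Lie algebra in characteristic zero, whereas the inequality itself fails for non-algebraic Lie algebras, as the paper remarks. For comparison, the paper gives no proof of this theorem but refers to Joseph and to Krause--Lenagan, who establish the equivalent bound $\GKdim(U(\g)/\operatorname{Ann}M)\leq 2\,\GKdim(M)$ by a ring-theoretic argument exploiting this $G$-action directly rather than via involutivity; your route can be completed, but only after supplying the missing saturation statement, which is of comparable depth.
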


For proof see \cite[Proposition 6.1.4]{Jos81} or \cite[Theorem
9.11]{KrLe}. This theorem does not hold for general Lie algebras.
On the other hand a stronger result is true for reductive Lie
algebras.

\begin{thm}
[\cite{Vog}, Theorem 1.1]\label{thm:Vog} Let $G$ be a real reductive group,
${\mathfrak{g}}$ be its complexified Lie algebra and $K$ be a maximal compact subgroup. Let $\left(  \sigma
,M\right)  $ be an irreducible Harish-Chandra module over $(\g,K)$. Then
\[
\operatorname{GKdim}(\sigma)=\frac{1}{2}\dim\mathcal{V}(\sigma).
\]

\end{thm}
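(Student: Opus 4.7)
The plan is to pass from the annihilator variety $\cV(M) \subset \g^*$ to an auxiliary invariant, the \emph{associated variety} $AV(M) \subset \fk^\perp \subset \g^*$, which is easier to relate to $\GKdim(M)$ and whose $G_\C$-saturation recovers (the support of) $\cV(M)$. First, I would pick a $K$-stable good filtration $\{F^iM\}$ on the Harish-Chandra module $M$, using that $M$ is admissible so each $K$-isotypic component is finite dimensional and any finite dimensional generating space can be enlarged to a $K$-stable one. Taking the associated graded, $\gr M$ is then a finitely generated $S(\g)$-module with a compatible algebraic $K$-action, and I would define
\[
AV(M) := \Supp_{S(\g)}(\gr M) \subset \g^*.
\]
A standard commutative-algebra computation (Hilbert polynomial of the filtration on $\gr M$) identifies $\GKdim(M) = \dim AV(M)$, and the degree does not depend on the $K$-stable good filtration chosen. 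The fact that $K$ acts locally finitely on $M$, and that this action differentiates to the $\g$-action via the inclusion $\fk \hookrightarrow \g$, forces $\fk$ to act by zero on $\gr M$ (the $K$-finite structure trivializes at the symbol level), so $AV(M) \subset \fk^\perp$.

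Next I would relate $AV(M)$ to $\cV(M)$. Since $\gr M$ is a quotient of $S(\g) \otimes_{\C} F^0 M$, the ideal in $S(\g)$ killing $\gr M$ is contained in the radical of $\gr(\Ann_{U(\g)} M)$, so $AV(M) \subset \cV(M)$, and by $G_\C$-equivariance of the radicals we obtain
\[
\cV(M) = G_\C \cdot AV(M)
\]
(as closed subvarieties; the inclusion $\supset$ uses that $\gr(\Ann M)$ is $G_\C$-stable while $AV(M)$ has support only inside $\fk^\perp$, and the opposite inclusion is Bernstein's bound). In particular each irreducible component of $AV(M)$ lies in the nilpotent cone of $\g$ intersected with $\fk^\perp$.

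The heart of the proof is the Kostant--Rallis type result: every $K$-orbit $\mathcal{O}_K$ in $\fk^\perp \cap \mathcal{N}$ satisfies
\[
\dim \mathcal{O}_K = \tfrac{1}{2} \dim (G_\C \cdot \mathcal{O}_K).
\]
Granting this, let $X$ be an irreducible component of $AV(M)$ of maximal dimension; since $AV(M)$ is $K$-stable, $X$ is a union of $K$-orbits, and its $G_\C$-saturation is the closure of a single nilpotent coadjoint orbit by Joseph's irreducibility theorem. The equidimensionality statement gives $\dim G_\C \cdot X = 2 \dim X$, hence
\[
\dim \cV(M) = 2 \dim AV(M) = 2 \GKdim(M),
\]
which is the claimed equality. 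The inequality $\GKdim(M) \ge \tfrac{1}{2} \dim \cV(M)$ then also recovers Gabber--Joseph in this setting, while the reverse inequality $\GKdim(M) \le \dim \cV(M)$ is automatic from $AV(M) \subset \cV(M)$.

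The main obstacle is the Kostant--Rallis equidimensionality claim. It requires choosing a symmetric decomposition $\g = \fk \oplus \fs$ (with $\fk^\perp \cong \fs$ under the Killing form), and analyzing nilpotent $K$-orbits in $\fs$ via $\fs\fl_2$-triples whose neutral element lies in $\fk$ and whose nilpositive/nilnegative elements lie in $\fs$. One then shows the centralizer dimensions satisfy $\dim \fk^x = \tfrac{1}{2} \dim \g^x$ for such nilpotent $x \in \fs$, which is the dimension identity above. Handling the non-connectedness of real reductive groups and checking that the $K$-stable good filtration can be chosen in a way compatible with the admissible structure are the subsidiary technical points I would need to verify.
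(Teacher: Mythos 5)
The paper does not actually prove this statement: it is imported verbatim from Vogan's 1978 paper (cited as [Vog, Theorem 1.1]), whose original argument predates the associated-variety formalism and proceeds by comparing $\GKdim(M)$ with $\GKdim(U(\g)/\operatorname{Ann}M)$ rather than by the route you take. Your proposal is the now-standard associated-variety proof, and its skeleton is sound: a $K$-stable good filtration gives $\GKdim(M)=\dim AV(M)$ with $AV(M)\subset \cV(M)\cap\fk^{\perp}$; the Kostant--Rallis half-dimension formula for nilpotent $K_{\C}$-orbits in $\fk^{\perp}$ (together with finiteness of these orbits) gives $\dim AV(M)\le\tfrac12\dim\cV(M)$; and the Gabber--Joseph/Bernstein inequality, which the paper records as Theorem \ref{thm:GabJo}, gives the reverse bound, whence equality. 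Two points in your write-up should be repaired, though neither is fatal. First, the identity $\cV(M)=\overline{G_{\C}\cdot AV(M)}$ does not follow from ``Bernstein's bound'' alone: to exclude the possibility that $AV(M)$ lies over boundary orbits you already need the Kostant--Rallis dimension formula, and ``Joseph's irreducibility theorem'' concerns $\cV(M)$ itself, not the saturation of a component of $AV(M)$; in fact the saturation statement can be dropped entirely, since the two dimension bounds above already yield the theorem. Second, your closing remark that the argument ``recovers Gabber--Joseph'' is circular as written, because that inequality is an input (the lower bound), not an output; similarly the ``automatic'' bound $\GKdim(M)\le\dim\cV(M)$ is not the relevant one---the factor $\tfrac12$ comes only from Kostant--Rallis. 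With these adjustments your sketch is a correct proof, by a genuinely different (and more geometric) route than the source the paper cites; what it buys is transparency about where the $\tfrac12$ comes from, at the cost of needing the Kostant--Rallis theory of nilpotent $K_{\C}$-orbits, which Vogan's original bimodule-style argument avoids.
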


%
%
%

%
\section{Proof of Theorem \ref{thm:Main}}
\label{sec:Der}
\setcounter{subsubsection}{0}

We will prove the theorem by induction, using the following three lemmas that will be proven in the next three subsections.
\begin{lemma}
\label{lem:Func2FuncFin} Let $\pi\in{\widehat{G}}_{n}$ and let $d:=depth(\pi)$.
Let $\alpha=(n_{1},...,n_{k})$ be a composition of $n-d$ and $\beta
=(d,\alp)=(d,n_{1},n_{2},...,n_{k})$. Then we have a natural embedding
$Wh^*_{\alpha}(A\pi) \hookrightarrow
Wh^*_{\beta}(\pi)$.
\end{lemma}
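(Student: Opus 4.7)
The plan is to realize $\pi|_{P_n}$ as a single unitary induction from a subgroup of $P_n$ that contains $N_n$, and then to pull back $\ell$ via the evaluation morphism supplied by Corollary \ref{cor:IndFunc}.

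\textbf{Step 1 (Unwinding the Mackey iteration).} By (\ref{der}), $\pi|_{P_n} = I_{\chi_1}^{d-1} I_{\chi_0}(A\pi)$. I would first unwind this iterated Mackey construction, by induction in stages, to exhibit
\[
\pi|_{P_n} \;\cong\; \operatorname{Ind}_{H}^{P_n}\bigl(A\pi \boxtimes \phi\bigr),
\]
where $H = G_{n-d} \ltimes U \subset P_n$ with
\[
U \;=\; \left\{ \begin{pmatrix} I_{n-d} & X \\ 0 & u \end{pmatrix} : X \in M_{(n-d)\times d}(\R),\; u \in N_d \right\},
\]
and $\phi$ is the character on $U$ that is trivial on the $X$-part and equal to the standard generic character on the $u$-part. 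Indeed, the initial $I_{\chi_0}$ extends $A\pi$ trivially across $X$, and each of the subsequent $d-1$ iterations of $I_{\chi_1}$ picks up precisely one of the superdiagonal entries of $u$.

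\textbf{Step 2 (Matching the characters).} A direct verification shows that $N_n \subset H$; more precisely, $N_n = U \rtimes N_{n-d}$, where $N_{n-d}$ is the upper-unitriangular subgroup of the top-left copy of $G_{n-d}$. Using that $\psi_\gamma = i\, w J_\gamma w^{-1}$ viewed as a character of $N_n$ via the trace pairing, a straightforward computation on simple-root subgroups yields
\[
\psi_\beta|_{N_{n-d}} \;=\; \psi_\alpha, \qquad \psi_\beta|_{U} \;=\; \phi.
\]
The point is that the block boundaries of $\beta = (d, n_1, \ldots, n_k)$, after conjugation by $w$, become subdiagonal zeros at exactly the positions that separate the bottom-right $d\times d$ block from the top-left $N_{n-d}$ block, together with the internal boundaries of $\alpha$.

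\textbf{Step 3 (Pullback of the functional).} By Corollary \ref{cor:IndFunc}, evaluation at $e$ yields a continuous $H$-equivariant surjection
\[
\mathrm{ev}_e \colon \pi^\infty \twoheadrightarrow \bigl(\widetilde{A\pi \boxtimes \phi}\bigr)^\infty,
\]
where $\widetilde{A\pi \boxtimes \phi} = \delta_{H\backslash P_n}^{1/2}\cdot (A\pi \boxtimes \phi)$. Since $\delta_{H\backslash P_n}$ is trivial on the unipotent subgroup $N_n \subset H$, this restricts to an $N_n$-equivariant surjection in which $N_n$ acts on the target by $\psi_\alpha \cdot \phi = \psi_\beta$. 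Given $\ell \in Wh^*_\alpha(A\pi)$, extend $\ell$ to $\ell \otimes \mathrm{id}: (A\pi \boxtimes \phi)^\infty \to \C_{\psi_\beta}$ and set $\tilde\ell := (\ell \otimes \mathrm{id}) \circ \mathrm{ev}_e$. Then $\tilde\ell \in Wh^*_\beta(\pi)$, and surjectivity of $\mathrm{ev}_e$ forces the assignment $\ell \mapsto \tilde\ell$ to be injective.

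\textbf{Main obstacle.} The only delicate part is Step~1, namely translating the abstract iterated Mackey induction $I_{\chi_1}^{d-1}I_{\chi_0}$ into an explicit one-step unitary (and smooth) induction from the concrete subgroup $H$ with the explicit character $\phi$. One has to keep careful track, at each iteration, of which column of the ambient $P_m$ the nilradical $V_m$ contributes to in the final $n\times n$ picture, and of the fact that all the $\chi_1$'s end up cutting out exactly the superdiagonal of the bottom-right $d\times d$ block. Once this identification is in place, Step~2 and Step~3 are essentially formal.
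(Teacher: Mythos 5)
Your proposal is correct and follows essentially the same route as the paper: your subgroup $H$ is exactly the paper's $S_{n-d,d}=(G_{n-d}\times N_{1^{d}})\ltimes N_{(n-d,d)}$, your Step 1 is the paper's Lemma \ref{lem:A1Step} (proved there by induction by stages), and your Step 3 is the paper's Proposition \ref{prop:pi2Api}. The only nuance is that the paper claims merely dense image (not surjectivity) of the evaluation map on the $G_n$-smooth vectors $\pi^{\infty}$ (a priori a proper subspace of the $P_n$-smooth vectors to which Corollary \ref{cor:IndFunc} applies), establishing density via topological irreducibility of $(A\pi)^{\infty}$; density plus continuity of $\ell$ still gives your injectivity conclusion.
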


\begin{lemma} \label{lem:DimOrbFin}
Let $\lambda,\mu$ be partitions of $n$ and $n-d$ respectively, and suppose that

\begin{enumerate}
\item $\mathcal{O}_{\left(  d,\mu\right)  }\subseteq\overline{\mathcal{O}%
_{\lambda}}.$

\item $\dim\mathcal{O}_{\lambda}\leq\dim\mathcal{O}_{\mu}+\left(  2n-d\right)
\left(  d-1\right).$
\end{enumerate}

Then $\lambda=\left(  d,\mu\right).$

\end{lemma}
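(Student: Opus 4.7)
The plan is to reduce the lemma to a direct calculation via the transpose-partition formula $\dim \mathcal{O}_\nu = |\nu|^2 - \sum_j (\nu^t_j)^2$ of Proposition \ref{prop:DimOrbT}. The first key step is to express $\dim \mathcal{O}_{(d,\mu)} - \dim \mathcal{O}_\mu$ explicitly. The Young diagram of the partition underlying $(d,\mu)$ is obtained from that of $\mu$ by appending a row of length $d$; in the transpose picture this amounts to adding one box to each of the first $d$ columns of $\mu^t$, so $(d,\mu)^t_j = \mu^t_j + 1$ for $j\leq d$ and $(d,\mu)^t_j = \mu^t_j$ for $j>d$. Plugging into the dimension formula and using $n^2-(n-d)^2 = 2nd-d^2$, this gives
\[
\dim \mathcal{O}_{(d,\mu)} - \dim \mathcal{O}_\mu \;=\; 2nd - d^2 - d - 2\sum_{j=1}^d \mu^t_j .
\]

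Next I would bound $\sum_{j=1}^d \mu^t_j$. This sum counts the boxes of $\mu$ lying in its first $d$ columns and is therefore at most $|\mu| = n-d$, with equality precisely when every part of $\mu$ is at most $d$. Substituting this bound yields
\[
\dim \mathcal{O}_{(d,\mu)} \;\geq\; \dim \mathcal{O}_\mu + (2n-d)(d-1),
\]
with equality iff every part of $\mu$ is $\leq d$. This numerical matching with the constant appearing in hypothesis (2) is the one step I expect to demand some care; once it is verified, the rest of the argument is purely formal.

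To finish, I would chain the inequalities. Hypothesis (1) together with $\mathcal{O}_{(d,\mu)} \subseteq \overline{\mathcal{O}_\lambda}$ gives $\dim \mathcal{O}_{(d,\mu)} \leq \dim \mathcal{O}_\lambda$, and hypothesis (2) gives $\dim \mathcal{O}_\lambda \leq \dim \mathcal{O}_\mu + (2n-d)(d-1)$. Combined with the lower bound established above, the composite inequality
\[
\dim \mathcal{O}_\mu + (2n-d)(d-1) \;\leq\; \dim \mathcal{O}_{(d,\mu)} \;\leq\; \dim \mathcal{O}_\lambda \;\leq\; \dim \mathcal{O}_\mu + (2n-d)(d-1)
\]
must be an equality throughout. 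Equality of dimensions with $\mathcal{O}_{(d,\mu)} \subseteq \overline{\mathcal{O}_\lambda}$ forces $\mathcal{O}_\lambda = \mathcal{O}_{(d,\mu)}$, so $\lambda$ and $(d,\mu)$ agree as partitions; and equality in the lower bound forces every part of $\mu$ to be $\leq d$, which is exactly the condition needed for the concatenation $(d,\mu)$ to already define a partition, yielding $\lambda = (d,\mu)$ as claimed.
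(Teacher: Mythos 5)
Your proposal is correct and follows essentially the same route as the paper: you establish the sharp bound $\dim\mathcal{O}_{(d,\mu)}\geq\dim\mathcal{O}_{\mu}+(2n-d)(d-1)$, with equality exactly when $(d,\mu)$ is already a partition (the paper's Corollary \ref{dimmu}), and then chain the three inequalities to force equality throughout. The only cosmetic difference is that you derive the key bound directly from the transpose formula of Proposition \ref{prop:DimOrbT} and the estimate $\sum_{j\le d}\mu^t_j\le n-d$, whereas the paper routes it through the equivalent inequality $\dim\mathcal{O}_{\alpha}\ge n^2+n-2\sum_i i\alpha_i$ of Lemma \ref{lem:DimOrb}.
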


\begin{lemma}
\label{lem:AVDerFin} Let $\pi\in{\widehat{G}}_{n}$ and let $d:=depth(\pi)$.
Then $$\dim \cV(\pi) \leq \dim\cV(A\pi)+(2n-d)(d-1).$$
\end{lemma}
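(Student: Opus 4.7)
The plan is to use the induced structure of $\pi|_{P_n}$ together with Lemmas \ref{lem:IndPairing} and \ref{lem:VogTensor} to first bound the $\mathfrak{p}_n$-associated variety of $\pi$ (where $\mathfrak{p}_n$ denotes the complexified Lie algebra of $P_n$), and then to transfer this bound to the $\mathfrak{g}_n$-associated variety.

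First, transitivity of induction converts the Mackey decomposition $\pi|_{P_n}=I_{\chi_1}^{d-1}I_{\chi_0}(A\pi)$ into a single unitarily induced representation $\pi|_{P_n} = \Ind_L^{P_n}(A\pi \otimes \chi)$, where $L = G_{n-d} \ltimes U$ with $U$ a unipotent group of dimension $(n-d)+(n-d+1)+\cdots+(n-1)$, and $\chi$ is a character of $U$ built from the iterated $\chi_1$-twists. A short bookkeeping argument gives the key identity $\dim P_n - \dim L = (d-1)(2n-d)/2$.

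Next, letting $\tau=(A\pi)^{HC}$ twisted by $-\chi$ be viewed as an $\mathfrak{l}$-module, Lemma \ref{lem:IndPairing} produces a non-degenerate $\mathfrak{p}_n$-pairing of $\pi^{\omega}$ with a quotient $Q$ of $U(\mathfrak{p}_n)\otimes_{U(\mathfrak{l})}\tau$. Combining Lemma \ref{lem:VogTensor}, Theorem \ref{thm:Vog} applied to $A\pi$, and the observation that a character of $\mathfrak{u}$ does not change $\GKdim$, one gets
$$\GKdim_{\mathfrak{p}_n}\bigl(U(\mathfrak{p}_n)\otimes_{U(\mathfrak{l})}\tau\bigr) = \frac{1}{2}\dim\cV(A\pi) + \frac{(d-1)(2n-d)}{2}.$$
The Gabber--Joseph inequality (Theorem \ref{thm:GabJo}) applied to $Q$, together with the non-degeneracy of the pairing (which identifies the $U(\mathfrak{p}_n)$-annihilator of $\pi^{\omega}$ with that of $Q$ up to the principal anti-involution), then yields $\dim\cV_{\mathfrak{p}_n}(\pi^{\omega}) \le \dim\cV(A\pi) + (d-1)(2n-d)$.

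The hard part will be the final comparison $\dim\cV_{\mathfrak{g}_n}(\pi) \le \dim\cV_{\mathfrak{p}_n}(\pi^{\omega})$, i.e., showing that the dimension of the annihilator variety does not drop upon restricting $\pi$ from $\mathfrak{g}_n$ to $\mathfrak{p}_n$. Up to closure, $\cV_{\mathfrak{p}_n}(\pi)$ is the image of $\overline{\cO_\lambda}=\cV_{\mathfrak{g}_n}(\pi)$ under the projection $\mathfrak{g}_n^* \onto \mathfrak{p}_n^*$, whose kernel $\mathfrak{p}_n^{\perp}$ is the $n$-dimensional ``last column'' subspace under the trace-form identification. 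The needed statement is that this projection is generically finite on $\cO_\lambda$, equivalently that generic $P_n$-orbits inside $\cO_\lambda$ have the same dimension as $\cO_\lambda$ itself. I would argue this via the characteristic-polynomial map $\mathfrak{g}_n \to \C^n$, which on a generic fibre $X+\mathfrak{p}_n^\perp$ restricts to a finite polynomial map between two $n$-dimensional affine spaces, so the nilpotent locus meets $X+\mathfrak{p}_n^\perp$ in only finitely many points. Combining all the bounds then yields $\dim\cV(\pi) \le \dim\cV(A\pi) + (2n-d)(d-1)$, as required.
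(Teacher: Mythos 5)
Your first two steps coincide with the paper's own proof: the subgroup you call $L=G_{n-d}\ltimes U$ is exactly the group $S_{n-d,d}$ of Lemma \ref{lem:A1Step}, and the chain Lemma \ref{lem:IndPairing}, Lemma \ref{lem:VogTensor}, Theorem \ref{thm:Vog}, Theorem \ref{thm:GabJo}, together with $\dim\fp_n-\dim\fs_{n-d,d}=(d-1)(2n-d)/2$, is the same computation (one small caveat: the module paired with $\widetilde{\sigma}^{\omega}$ must be built from the \emph{complex conjugate} of $(A\pi)^{HC}$, suitably twisted; this changes neither the GK-dimension nor the dimension of the annihilator variety, and is how the paper sets it up).

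The genuine gap is in your final step, the proof that $\dim pr(\cO_{\lambda})=\dim\cO_{\lambda}$ for the projection $pr\colon\g_n^{*}\to\fp_n^{*}$. You rest it on the claim that on a (generic) fibre $X+\fp_n^{\perp}$ the characteristic-polynomial map is a finite map $\C^{n}\to\C^{n}$, so that the fibre contains only finitely many nilpotents. This claim is false for every non-regular orbit, and the fibres that matter pass through nilpotent matrices. Identifying $\fp_n^{\perp}$ with matrices $ve_n^{\prime}$ supported in the last column, the matrix-determinant lemma gives
\[
\det\left(tI-A-ve_n^{\prime}\right)=\det\left(tI-A\right)-e_n^{\prime}\operatorname{adj}\left(tI-A\right)v,
\]
so on the fibre through $A$ the characteristic-polynomial map is \emph{affine-linear} in $v$. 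For nilpotent $A$ with largest Jordan block of size $k$ one has $\operatorname{adj}(tI-A)=\sum_{j=0}^{n-1}t^{n-1-j}A^{j}$, hence the rank of the linear part equals $\dim\Span\{e_n^{\prime}A^{j}\colon 0\le j\le n-1\}\le k$. Therefore, if $\lambda\neq(n)$, then for \emph{every} $A\in\cO_{\lambda}$ the set of nilpotent elements of $A+\fp_n^{\perp}$ is a nonempty affine subspace of dimension at least $n-k>0$ (already for $A=0$ in $\gl_2$ it is the line $\left(\begin{smallmatrix}0&\ast\\0&0\end{smallmatrix}\right)$), so no genericity rescues finiteness. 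The extra nilpotents you pick up lie in \emph{larger} orbits, which the characteristic polynomial cannot distinguish from $\cO_\lambda$; you must intersect the fibre with the single orbit, not with the whole nilpotent cone. This is exactly what the paper's Lemma \ref{lem:geo} supplies: on the open dense subset $U=\{A\in\cO_{\lambda}\colon e_n^{\prime}A^{k-1}\neq0\}$ the projection is injective, proved by writing $B=A+ve_n^{\prime}$, using $A^{k}=B^{k}=0$ and traces of powers of $B$ to show $e_n^{\prime}A^{i}v=0$ for all $i$, and then contradicting $e_n^{\prime}A^{k-1}\neq0$ unless $v=0$; via Corollary \ref{cor:dimAVres} this yields the missing inequality $\dim\cV(\pi)\le\dim\cV(\pi|_{P_n})$. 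As proposed, your route only works in the regular case $\lambda=(n)$, where it is not needed.
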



%

\begin{proof}
[Proof of Theorem \ref{thm:Main}]
We prove the statement by induction on $n$. For $n=0,1$ there is
nothing to prove. Now take $n>1$ and suppose that the theorem holds true for
all $r<n$. Let $\pi\in{\widehat{G}}_{n}, \, d:=depth(\pi) \text{ and } \lambda:=AP(\pi).$

Let $\mu$ be the depth composition  of $A\pi$. By the induction hypothesis we know that $\mu$ is a partition of $n-d$, $\cV(A\pi)= \overline{\cO_{\mu}}$ and
$Wh^*_{\mu}(A\pi^{\infty})\neq0$.

Let $\beta=(d,\mu)$. Then $\beta$ is the depth composition of $\pi$.
From the induction hypothesis and Lemma \ref{lem:Func2FuncFin} we obtain  $Wh^*_{\beta}(\pi^{\infty})\neq0$. It suffices to show that $\beta=\lambda$.

By Corollary \ref{cor:Mat} we have $ \cO_{\beta} \subset \cV(\pi)=\overline{\cO_{\lambda}}$. By Lemma \ref{lem:AVDerFin} we have $\dim \cO_{\lambda} \leq \dim\mathcal{O}_{\mu}+\left(  2n-d\right)
\left(  d-1\right).$
Thus, by Lemma \ref{lem:DimOrbFin}, $\beta=\lambda$.
\end{proof}

\subsection{Proof of Lemma \ref{lem:Func2FuncFin}} \label{subsec:PfFunc2FuncFin}

Now we construct a functional on $\pi\in{\widehat{G}}_{n}$ from a functional
on its adduced representation.

Let $P_{(n-d,d)}=\left(  G_{n-d}\times G_{d}\right)  \ltimes N_{(n-d,d)}$ be the
maximal parabolic subgroup corresponding to the partition $\left(
n-d,d\right)  $ and define the subgroup $S_{n-d,d}:=\left(  G_{n-d}\times
N_{1^{d}}\right)  \ltimes N_{(n-d,d)}$ where $N_{1^{d}}$ is the nilradical of
the Borel subgroup of $G_{d}$.

\begin{lemma} \label{lem:A1Step}
Let $\pi\in\widehat{G_{n}}$, and let $d:=depth(\pi)$. Then $\pi|_{P_{n}%
}=Ind_{S_{n-d,d}}^{P_{n}}\left(  A\pi\otimes\psi\otimes1\right)  $ where
$\psi$ is a nondegenerate unitary character of $N_{1^{d}}$.
\end{lemma}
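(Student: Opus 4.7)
My plan is to unpack the iterated Mackey construction that defines $A\pi$. By equation \eqref{der} and Definition \ref{Api} we already have
$$\pi|_{P_n} = I_{\chi_1}^{d-1} I_{\chi_0}(A\pi),$$
so the task reduces to rewriting this iterated Mackey-induced representation as a single induced representation. As a first step, I would observe that since the stabilizer of $\chi_0$ in $G_{n-d}$ is all of $G_{n-d}$, the Mackey operation $I_{\chi_0}$ is tautological: $I_{\chi_0}(A\pi)$ is simply $A\pi$ extended by the trivial character to $P_{n-d+1}=G_{n-d}\ltimes V_{n-d+1}$, where I write $V_k \cong \mathbb{R}^{k-1}$ for the normal unipotent radical of $P_k$.

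Next I would apply $I_{\chi_1}$ iteratively $d-1$ times. At each stage, the stabilizer of $\chi_1\in\widehat{V_{n-d+k+1}}$ inside $G_{n-d+k}$ is the mirabolic $P_{n-d+k}$, and since this mirabolic contains the subgroup produced at the previous stage, the compositions of these Mackey inductions can be collapsed by transitivity of normalized induction. This yields
$$\pi|_{P_n}=\Ind_{S}^{P_n}\bigl(A\pi\boxtimes\mathbf{1}\boxtimes\chi_1^{\boxtimes(d-1)}\bigr),$$
where $S$ is the iterated semidirect product $G_{n-d}\ltimes V_{n-d+1}\ltimes V_{n-d+2}\ltimes\cdots\ltimes V_n$ sitting inside $P_n$.

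The final step is to identify $S$ with $S_{n-d,d}$ and to match the inducing representation with $A\pi\otimes\psi\otimes\mathbf{1}$. A direct matrix computation shows that inside $P_n$, the set-theoretic product $G_{n-d}\cdot V_{n-d+1}\cdots V_n$ consists of the block matrices $\begin{pmatrix} A & W \\ 0 & U \end{pmatrix}$ with $A\in G_{n-d}$, $W\in M_{(n-d)\times d}$ arbitrary, and $U\in N_{1^d}$, which is $S_{n-d,d}$ by definition. For $k=2,\ldots,d$, each $V_{n-d+k}$ contributes its first $n-d$ coordinates to the $W = N_{(n-d,d)}$ block and its remaining $k-1$ coordinates to the strictly upper-triangular part of $U$; since $\chi_1$ was chosen nontrivial only on the last coordinate of $V_{n-d+k}$, it picks out precisely the superdiagonal entry $U_{k-1,k}$. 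Combining these contributions with the trivial character on $V_{n-d+1}$ (which contributes only to the first column of $W$), the character on $N_{(n-d,d)}$ is trivial while the character on $N_{1^d}$ is the standard nondegenerate sum-of-superdiagonal character $\psi(U)=\exp\bigl(i\sum_{k=1}^{d-1}U_{k,k+1}\bigr)$.

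The main obstacle will be the bookkeeping: carefully tracking how each $V_k$ embeds into $P_n$, verifying that the iterated semidirect product is indeed a subgroup (which requires that each $V_k$ be normalized by $G_{n-d}$ and by the earlier $V_l$'s in the chain, a fact visible from the matrix picture), and confirming that transitivity of Mackey induction applies at each stage so that the intermediate tensoring with $\chi_1$ commutes with the induction through the normal abelian factor. All of this is a direct but careful verification using the explicit matrix descriptions of $P_k$, $V_k$, $G_{n-d}$, $N_{1^d}$ and $N_{(n-d,d)}$ given in Section \ref{subsec:not}.
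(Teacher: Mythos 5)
Your proposal is correct and is exactly the argument the paper has in mind: the paper's proof of Lemma \ref{lem:A1Step} is the one-line remark that it ``follows from the definition of $A\pi$ by a straightforward argument involving induction by stages,'' and your unpacking of the iterated Mackey construction $I_{\chi_1}^{d-1}I_{\chi_0}(A\pi)$, collapsing the tower via transitivity of unitary induction and identifying the resulting subgroup and character with $S_{n-d,d}$ and $A\pi\otimes\psi\otimes 1$, is precisely that argument spelled out. The bookkeeping you flag (each $\chi_1$ being fixed by the relevant mirabolic, so the tensoring commutes with induction through the normal abelian factor) is indeed the only point needing care, and it works as you describe.
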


\begin{proof}
This follows from the definition of $A\pi$ by a straightforward argument
involving induction by stages.
\end{proof}

\begin{prop}
\label{prop:pi2Api}  Let $\pi\in\widehat{G_{n}}$, and let $d:=depth(\pi)$. Then
there exists an $S_{n-d,d}$-equivariant map from $\pi^{\infty}$ to
$(A\pi)^{\infty}\otimes|\det|^{(d-1)/2}$ with dense image.

\end{prop}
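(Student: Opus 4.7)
The plan is to combine Lemma \ref{lem:A1Step} with Corollary \ref{cor:IndFunc}. By Lemma \ref{lem:A1Step}, $\pi|_{P_n}$ is unitarily induced from the representation $\sigma = A\pi \otimes \psi \otimes 1$ of $S_{n-d,d} = (G_{n-d} \times N_{1^d}) \ltimes N_{(n-d,d)}$. Applying Corollary \ref{cor:IndFunc} to this realization yields at once a continuous $S_{n-d,d}$-equivariant surjection (hence with dense image) from $\pi^\infty$ onto $\widetilde{\sigma}^\infty$, where $\widetilde{\sigma} = \delta_{S_{n-d,d}\backslash P_n}^{1/2}\,\sigma$. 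The entire task is therefore to identify the modular twist $\widetilde{\sigma}$.

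The modular character is trivial on each of the unipotent factors $N_{1^d}$ and $N_{(n-d,d)}$ (both groups contain them and the adjoint action is unipotent), so the ratio $\delta_{S_{n-d,d}\backslash P_n}$ is trivial there. The only nontrivial evaluation is on $g \in G_{n-d}$. To handle this I would invoke the formula $\delta_G(g) = |\det\operatorname{Ad}_{\mathfrak{g}}(g)|$ recalled in Section \ref{subsec:UnInd}, and decompose the Lie algebras $\mathfrak{p}_n = \mathfrak{gl}_{n-1} \oplus \mathbb{R}^{n-1}$ and $\mathfrak{s}_{n-d,d} = \mathfrak{gl}_{n-d} \oplus \mathfrak{n}_{1^d} \oplus M_{(n-d)\times d}$. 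The conjugation pieces contribute nothing to the determinant; the remaining contributions come from the standard action of $g$ on $\mathbb{R}^{n-1}$ (giving $|\det g|$) and the left-multiplication action of $g$ on $M_{(n-d)\times d} \cong \mathfrak{n}_{(n-d,d)}$ (which is $d$ copies of the standard representation, giving $|\det g|^d$). Taking the ratio and the square root will produce the factor $|\det g|^{(d-1)/2}$.

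Thus $\widetilde{\sigma}$ is identified with the $S_{n-d,d}$-representation $(A\pi\otimes|\det|^{(d-1)/2})\otimes\psi\otimes 1$, which is precisely what the statement calls $(A\pi)^\infty\otimes|\det|^{(d-1)/2}$ (with $\psi$ on $N_{1^d}$ and the trivial character on $N_{(n-d,d)}$ implicit in the $S_{n-d,d}$-structure). Corollary \ref{cor:IndFunc} then delivers the required surjection $\pi^\infty \twoheadrightarrow (A\pi)^\infty \otimes |\det|^{(d-1)/2}$. I do not anticipate any serious obstacle: once Lemma \ref{lem:A1Step} is taken for granted, the argument is pure bookkeeping, and the only point demanding any care is the determinant computation for the adjoint action on $\mathfrak{n}_{(n-d,d)}$.
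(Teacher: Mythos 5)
Your setup is right and your modular--character computation is correct and matches the paper (indeed $\Delta_{P_n}(g)=|\det g|$ and $\Delta_{S_{n-d,d}}(g)=|\det g|^{d}$ on $g\in G_{n-d}$, giving the twist $|\det|^{(d-1)/2}$). But there is a genuine gap at the step ``Applying Corollary \ref{cor:IndFunc} \ldots yields at once a continuous $S_{n-d,d}$-equivariant surjection (hence with dense image) from $\pi^\infty$.'' Corollary \ref{cor:IndFunc} is applied with $G=P_n$ and $H=S_{n-d,d}$, so the surjection it produces is from $W^\infty=(\pi|_{P_n})^\infty$, the space of \emph{$P_n$-smooth} vectors. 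The proposition is about $\pi^\infty$, the space of \emph{$G_n$-smooth} vectors, which is only a (generally proper) subspace of $(\pi|_{P_n})^\infty$. The restriction of the evaluation map $f\mapsto f(e)$ to this smaller subspace is a priori neither surjective nor of dense image, and this is exactly the nontrivial content of the proposition; your parenthetical ``hence with dense image'' conceals the point that actually needs proof.

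The paper closes this gap as follows: since $A\pi$ is irreducible unitary, $(A\pi)^\infty$ is a topologically irreducible Fr\'echet representation of $G_{n-d}$ (by \cite[Theorem 3.4.12]{Wal1}), so the closure of the image of the equivariant map on $\pi^\infty$ is either $0$ or everything; hence it suffices to show the map does not vanish identically on $\pi^\infty$. For that, one takes a nonzero $v\in\pi^\infty$, views it via Poulsen's theorem (Theorem \ref{thm:Pou}) as a smooth function on $P_n$, finds $p\in P_n$ with $v(p)\neq 0$, and observes that $\pi(p)v\in\pi^\infty$ has nonzero value at $e$. You should add this irreducibility-plus-translation argument; without it the claim of dense image on $\pi^\infty$ is unjustified.
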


\begin{proof}
Note that for $g\in S_{n-d,d}, \,\Delta_{P_{n}}(g)=|\det(g)| \text{ and
}\Delta_{S_{n-d,d}}(g)=|\det(g)|^d$. By Corollary \ref{cor:IndFunc}, the
evaluation $f\mapsto f\left(  e\right)  $ is an $S_{n-d,d}$-equivariant map from
$(\pi|_{P_{n}})^{\infty}$ to $(A\pi)^{\infty}\otimes|\det|^{(d-1)/2}$.
Note that $A\pi$ is an irreducible unitary representation of $G_{d}$
and thus, by
\cite[Theorem 3.4.12]{Wal1}, $(A\pi)^{\infty}$ is an irreducible \Fre representation of $G_{d}$. Thus it is enough to
show that this map does not vanish on $\pi^{\infty}$. For that let $v\in
\pi^{\infty}$ be a non-zero vector. Then $v \in\pi|_{P_{n}}^{\infty
}=Ind_{S_{n-d,d}}^{P_{n}}\left(  A\pi\otimes\psi\otimes1\right) ^{\infty}$
defines a smooth function on $P_{n}$ that does not vanish at some point $p$.
Then $\pi(p)v$ does not vanish at $e$.
\end{proof}

This proposition immediately implies Lemma \ref{lem:Func2FuncFin}.

\subsection{Proof of Lemma \ref{lem:DimOrbFin} } \label{subsec:PfDimOrbFin}
It will be useful to have a second formula for $\dim\mathcal{O}_{\lambda}$
directly in terms of $\lambda$. We will also consider reorderings of $\lambda$.

%
\begin{lem}
\label{lem:DimOrb} If $\alpha$ is a composition of $n$ then we have
\begin{equation} \label{eq:DimOrb}
\dim\mathcal{O}_{\alpha}\geq n^{2}+n-2\sum\nolimits_{i}i\alpha_{i}.
\end{equation}
 Moreover,
equality holds if and only if $\alpha$ is a partition.
\end{lem}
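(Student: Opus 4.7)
The plan is to reduce to the case when $\alpha$ is already a partition, where Proposition \ref{prop:DimOrbT} gives a closed formula, and then deduce the general inequality from the rearrangement inequality.

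First I would note that $\cO_\alpha$ depends only on the underlying multiset of parts of $\alpha$: for any composition $\alpha$, the Jordan matrix $J_\alpha$ is $\GL_n(\C)$-conjugate to $J_\lambda$, where $\lambda$ is the partition obtained by reordering $\alpha$. Hence $\dim\cO_\alpha = \dim\cO_\lambda$, and the proof reduces to (i) evaluating $\dim\cO_\lambda$ explicitly in terms of $\lambda$, and then (ii) comparing $\sum_i i\lambda_i$ with $\sum_i i\alpha_i$.

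For (i), I would apply Proposition \ref{prop:DimOrbT} to get $\dim\cO_\lambda = n^2 - \sum_j \nu_j^2$ with $\nu = \lambda^t$, and then derive the classical identity $\sum_j \nu_j^2 = 2\sum_i i\lambda_i - n$. The identity follows by writing
\[
\sum_j \nu_j^2 \;=\; \sum_j \Bigl(\sum_i \mathbf{1}[\lambda_i \geq j]\Bigr)^{\!2} \;=\; \sum_{i,i'} \min(\lambda_i, \lambda_{i'}),
\]
and then invoking that $\lambda$ is a partition to rewrite $\min(\lambda_i, \lambda_{i'}) = \lambda_{\max(i,i')}$; summing over $(i,i')$ with $\max(i,i') = m$ (which contributes $2m-1$ pairs) yields $\sum_m (2m-1)\lambda_m$. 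Substituting into the dimension formula gives the equality case $\dim\cO_\lambda = n^2 + n - 2\sum_i i\lambda_i$.

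For (ii), the rearrangement inequality applied to the increasing sequence $(1, 2, \ldots, k)$ and the parts of $\alpha$ shows that $\sum_i i\alpha_i \geq \sum_i i\lambda_i$, with equality precisely when $\alpha = \lambda$. Combining this with the formula from (i) gives
\[
\dim\cO_\alpha \;=\; n^2 + n - 2\sum_i i\lambda_i \;\geq\; n^2 + n - 2\sum_i i\alpha_i,
\]
with equality iff $\alpha$ is a partition. The only step with any substance is the combinatorial identity in (i), and even that is classical; no major obstacle is anticipated.
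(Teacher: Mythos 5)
Your proposal is correct and follows essentially the same route as the paper: reduce to the partition case via $\cO_\alpha=\cO_\lambda$, combine Proposition \ref{prop:DimOrbT} with the identity $\sum_j\nu_j^2=2\sum_i i\lambda_i-n$, and then compare $\sum_i i\alpha_i$ with $\sum_i i\lambda_i$. The only cosmetic difference is that you derive the identity via $\sum_j\nu_j^2=\sum_{i,i'}\min(\lambda_i,\lambda_{i'})$ while the paper counts the entries $2i$ of the Young diagram by rows and by columns, and you cite the rearrangement inequality where the paper simply asserts the strict inequality $\sum_i i\alpha_i>\sum_i i\lambda_i$ for non-partitions.
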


\begin{proof}
Let $\lambda = \alp^{\geq}$ and let $\nu=\lambda^{t}$ be the transposed partition. \\
We first show that equality holds in \eqref{eq:DimOrb} if $\alp$
is a partition, i.e. $\alp=\lambda$. By Proposition
\ref{prop:DimOrbT} it suffices to prove
\[
\sum\nolimits_{j}\nu_{j}^{2}=2\sum\nolimits_{i}i\lambda_{i}-n.
\]
Consider the Young diagram of $\lambda$ and write the number $2i$ in every box
in the $i$-th row. Compute the sum of these numbers in two ways: a) adding
rows first, b) adding columns first. This gives
\[
2\sum\nolimits_{i}i\lambda_{i}=2\sum\nolimits_{j}\left(  1+\cdots+\nu
_{j}\right)  =\sum\nolimits_{j}\left(  \nu_{j}^{2}+\nu_{j}\right)
=n+\sum\nolimits_{j}\nu_{j}^{2},
\]
as needed.

Now, for general $\alp$ we have $\mathcal{O}_{\alpha}=\mathcal{O}_{\lambda}$. If $\alpha$
is not a partition then we have strict inequality in (\ref{eq:DimOrb}) since $\sum\nolimits_{i}%
i\alpha_{i}>\sum\nolimits_{i}i\lambda_{i}$.
\end{proof}

\begin{cor}
\label{dimmu} Let $\mu$ be a partition of $n-d$ and write $\alpha=\left(
d,\mu\right)  $ then we have
\[
\dim\mathcal{O}_{\alpha}\geq\dim\mathcal{O}_{\mu}+\left(  2n-d\right)  \left(
d-1\right)  \text{,}%
\]
and equality holds if and only if $\alpha=\left(  d,\mu\right)  $ is a partition (i.e. $d \geq \mu_1$).
\end{cor}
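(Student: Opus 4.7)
The plan is to apply Lemma \ref{lem:DimOrb} to both the composition $\alpha=(d,\mu)$ of $n$ and to the partition $\mu$ of $n-d$, and then verify that the two resulting formulas differ by exactly $(2n-d)(d-1)$. No geometric input beyond the previous lemma is needed; the entire statement is a routine combinatorial identity, so the main step is bookkeeping.

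First I would compute $\sum_i i\alpha_i$ for $\alpha=(d,\mu_1,\ldots,\mu_k)$. The leading entry $d$ contributes $1\cdot d$, and each subsequent entry $\mu_j$ contributes $(j+1)\mu_j$, giving
$$\sum_i i\alpha_i \;=\; d+\sum_j (j+1)\mu_j \;=\; d+(n-d)+\sum_j j\mu_j \;=\; n+\sum_j j\mu_j.$$
Substituting into Lemma \ref{lem:DimOrb} yields
$$\dim\mathcal{O}_\alpha \;\geq\; n^2+n-2\Bigl(n+\sum_j j\mu_j\Bigr) \;=\; n^2-n-2\sum_j j\mu_j,$$
with equality iff $\alpha$ is a partition, i.e.\ iff $d\geq\mu_1$.

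Next, since $\mu$ is itself a partition of $n-d$, the equality clause of Lemma \ref{lem:DimOrb} gives
$$\dim\mathcal{O}_\mu \;=\; (n-d)^2+(n-d)-2\sum_j j\mu_j.$$
It then remains to verify the elementary identity
$$(n-d)^2+(n-d)+(2n-d)(d-1) \;=\; n^2-n,$$
which is immediate by expansion. Combining the displays above finishes the proof, and the equality condition in the corollary matches precisely the equality condition inherited from Lemma \ref{lem:DimOrb} applied to $\alpha$. There is no genuine obstacle; the only care needed is to keep track that $\mu$ being a partition does not by itself make $(d,\mu)$ a partition, which is exactly why the extra hypothesis $d\geq\mu_1$ appears in the equality case.
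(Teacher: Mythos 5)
Your proof is correct and follows essentially the same route as the paper: compute $\sum_i i\alpha_i = n + \sum_j j\mu_j$, apply Lemma \ref{lem:DimOrb} to $\alpha$ (inequality, with equality iff $\alpha$ is a partition) and to $\mu$ (equality, since $\mu$ is a partition), and finish with the elementary identity $(n^2-n)-\bigl((n-d)^2+(n-d)\bigr)=(2n-d)(d-1)$. The bookkeeping and the equality analysis match the paper's argument.
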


\begin{proof}
We observe that%
\[
\sum\nolimits_{j}j\alpha_{j}=d+\sum\nolimits_{i}\left(  i+1\right)  \mu
_{i}=n+\sum\nolimits_{i}i\mu_{i}%
\]
Let $r=n-d$ and apply Lemma \ref{lem:DimOrb} to get the following
inequality%
\begin{align*}
\dim\mathcal{O}_{\alpha}-\dim\mathcal{O}_{\mu}  &  \geq\left(  n^{2}%
+n-2\sum\nolimits_{j}j\alpha_{j}\right)  -\left(  r^{2}+r-2\sum\nolimits_{i}%
i\mu_{i}\right) \\
&  =\left(  n^{2}-n\right)  -\left(  r^{2}+r\right)  =\left(  n+r\right)
\left(  n-r-1\right)  =\left(  2n-d\right)  \left(  d-1\right)
\end{align*}

By Lemma \ref{lem:DimOrb} equality holds iff $\alpha$ is a partition.
\end{proof}

\begin{proof}[Proof of Lemma \ref{lem:DimOrbFin}]
Let us write $\alpha=\left(  d,\mu\right)  $. By assumptions (1) and (2) and
Corollary \ref{dimmu} we get
\[
\dim\mathcal{O}_{\alpha}\leq\dim\mathcal{O}_{\lambda}\leq\dim\mathcal{O}_{\mu
}+\left(  2n-d\right)  \left(  d-1\right)  \leq\dim\mathcal{O}_{\alpha
}\text{.}%
\]

Hence equality must hold throughout. This implies that $\dim\mathcal{O}%
_{\alpha}=\dim\mathcal{O}_{\lambda}$ and, by Corollary \ref{dimmu}, that $\alpha$
is a partition. Now assumption (1) implies that $\alpha=\lambda$.
\end{proof}

\subsection{Proof of Lemma \ref{lem:AVDerFin}} \label{subsec:PfAVDerFin}


First we want to prove that $\dim\mathcal{V}
(\pi|_{P_{n}}) \geq \dim\mathcal{V}(\pi)$. We will start with a geometric lemma.

\begin{lemma}
\label{lem:geo} Let $\mathcal{O}\subset\mathfrak{g}_{n}^{\ast}$ be a nilpotent
coadjoint orbit. Then there exists an open dense subset $U\subset\mathcal{O}$
such that the restriction to $U$ of the projection $pr:=pr_{\mathfrak{p}_{n}%
^{\ast}}^{\mathfrak{g}_{n}^{\ast}}$ is injective.
\end{lemma}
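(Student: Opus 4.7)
The plan is to exhibit a single representative $X_0\in\mathcal{O}$ at which both the set-theoretic fiber of $pr$ and the differential $d(pr|_{\mathcal{O}})_{X_0}$ are as small as possible, and to deduce generic injectivity from this by standard algebraic geometry. Under the trace identification $\mathfrak{g}_n^{\ast}\cong\mathfrak{g}_n$ of Remark~\ref{traceform}, the annihilator $\mathfrak{p}_n^{\perp}$ becomes the subspace $V:=\{v e_n^{T}:v\in\C^n\}$ of matrices supported in the last column, so $pr$ is simply ``forget the last column''. Writing $\lambda=(\lambda_1\le\cdots\le\lambda_k)$ and $b_s:=\lambda_1+\cdots+\lambda_s$, and assuming $\lambda_k\ge 2$ (otherwise $\mathcal{O}=\{0\}$ and the claim is vacuous), the key choice is
\[
X_0 := J_{\lambda}^{T},
\]
which has vanishing $n$-th column, sends $e_i\mapsto e_{i+1}$ for $i\notin\{b_s\}$, annihilates each $e_{b_s}$, and acts on the ``last block'' $\operatorname{span}(e_{b_{k-1}+1},\dots,e_n)$ as the subdiagonal Jordan block of size $\lambda_k$.

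First I would check that $pr^{-1}(pr(X_0))\cap\mathcal{O}=\{X_0\}$. Any point of this fiber has the form $M=X_0+ v e_n^{T}$; since $X_0 e_n=0$ we have $Me_n=v$ and $Me_j=X_0 e_j$ for $j<n$, so iterating the shift on the last block yields
\[
M^{\lambda_k-1}e_{b_{k-1}+1}=e_n,\qquad M^{\lambda_k}e_{b_{k-1}+1}=M e_n=v.
\]
But every $M\in\mathcal{O}_\lambda$ has nilpotency index $\lambda_k$, so $M^{\lambda_k}=0$, forcing $v=0$ and $M=X_0$.

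Next I would verify that $d(pr|_{\mathcal{O}})_{X_0}$ is injective, equivalently $\operatorname{im}(\operatorname{ad}_{X_0})\cap V=0$. Suppose $\operatorname{ad}_{X_0}(Y)=X_0Y-YX_0$ lies in $V$, so that its first $n-1$ columns vanish. Reading this column by column produces the recursion $Ye_{j+1}=X_0 Ye_j$ for $j<n$ with $j\notin\{b_s\}$, together with $Ye_{b_s}\in\ker X_0$ for $s<k$. Iterating the recursion through the last block gives $Ye_n=X_0^{\lambda_k-1}Ye_{b_{k-1}+1}$, and therefore the remaining $n$-th column of $\operatorname{ad}_{X_0}(Y)$ equals $X_0Ye_n=X_0^{\lambda_k}Ye_{b_{k-1}+1}=0$, because $X_0^{\lambda_k}=0$. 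Hence $\operatorname{ad}_{X_0}(Y)=0$, as required.

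Finally, these two properties together say that $pr|_{\mathcal{O}}$ is étale at $X_0$ with $pr^{-1}(pr(X_0))\cap\mathcal{O}$ a reduced single point; on the open étale locus $U_0\subset\mathcal{O}$, $pr|_{U_0}$ is flat with fiber of length one over $pr(X_0)$, and the complement $\mathcal{O}\setminus U_0$ is properly closed in the irreducible variety $\mathcal{O}$, hence of strictly smaller dimension, so that its image in $\overline{pr(\mathcal{O})}$ misses an open dense subset. Over this open dense subset every fiber of $pr|_{\mathcal{O}}$ sits inside $U_0$ and has length one, so $pr|_{\mathcal{O}}$ is birational onto its image and the injectivity locus $U\subset\mathcal{O}$ is open and dense. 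The main obstacle is to produce a representative that works uniformly across all partitions $\lambda$; the subdiagonal choice $X_0=J_\lambda^{T}$ succeeds because its $n$-th column already vanishes, so the one nilpotency identity $X_0^{\lambda_k}=0$ simultaneously kills $v$ in the fiber computation and forces the $n$-th column of any $Y$ with $\operatorname{ad}_{X_0}(Y)\in V$ into $\ker X_0$ in the differential computation.
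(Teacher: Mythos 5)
Your two computations at the special point $X_0=J_{\lambda}^{T}$ are correct: the full fiber $pr^{-1}(pr(X_0))\cap\mathcal{O}$ is indeed $\{X_0\}$, and $\operatorname{im}(\operatorname{ad}_{X_0})\cap\mathfrak{p}_n^{\perp}=0$. The genuine gap is in the concluding ``standard algebraic geometry'' step. For a quasi-finite morphism that is not proper, fiber length is \emph{not} locally constant, even on the flat/\'etale locus, and one fiber consisting of a single reduced point gives no upper bound on the degree: the remaining preimages of nearby points can simply escape, which in the present situation means degenerating into the boundary $\overline{\mathcal{O}}\setminus\mathcal{O}$. A minimal model of the failure is the projection of $X=\{y^{2}=x\}\setminus\{(1,-1)\}$ to the $x$-axis: the fiber over $x=1$ is one reduced point, the map is unramified there and flat, yet the general fiber has two points, so the map is not generically injective. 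Hence your inference ``flat on $U_0$, fiber of length one over $pr(X_0)$, therefore length one over a dense open subset of the image'' is invalid (note also that $pr(X_0)$ need not lie in the dense open set you construct, since it may lie in the closure of $pr(\mathcal{O}\setminus U_0)$). This is exactly where finiteness (degree equals the sum of local multiplicities) would be needed and is unavailable.

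The gap can be closed with your two facts, but only by using the $P_n$-equivariance of $pr$, which your argument never invokes. Since $[\mathfrak{g}_n,X_0]^{\perp}=\mathfrak{z}_{\mathfrak{g}_n}(X_0)$ for the trace form, your condition $[\mathfrak{g}_n,X_0]\cap\mathfrak{p}_n^{\perp}=0$ is equivalent to $\mathfrak{z}_{\mathfrak{g}_n}(X_0)+\mathfrak{p}_n=\mathfrak{g}_n$, i.e.\ to $[\mathfrak{p}_n,X_0]=[\mathfrak{g}_n,X_0]$, so the $P_n$-orbit of $X_0$ is open and dense in $\mathcal{O}$; on that orbit injectivity follows from your fiber computation together with equivariance (if $pr(pX_0p^{-1})=pr(B)$ with $B\in\mathcal{O}$, then, $\mathfrak{p}_n^{\perp}$ being $\operatorname{Ad}(P_n)$-stable, $pr(X_0)=pr(p^{-1}Bp)$, whence $p^{-1}Bp=X_0$). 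This repaired route is close in spirit to the paper's remark that its $U$ is a single $P_n$-orbit; the paper itself argues differently, taking $U=\{A\in\mathcal{O}\mid e_n^{\prime}A^{k-1}\neq0\}$ explicitly and proving injectivity at \emph{every} point of $U$ by a trace-and-induction argument, which is precisely the work your single-point shortcut was trying to avoid.
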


For the proof see \S\S\S  \ref{subsubsec:Pflem:geo}.

\begin{cor}
\label{cor:geo} Let ${\mathcal{V}}\subset\mathfrak{g}_{n}^{\ast}$ be the
closure of a nilpotent coadjoint orbit. Then $\dim pr_{\mathfrak{p}_{n}^{\ast
}}^{\mathfrak{g}_{n}^{\ast}}({\mathcal{V}})=\dim{\mathcal{V}}$.
\end{cor}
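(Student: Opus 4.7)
The plan is to deduce the corollary directly from Lemma \ref{lem:geo} together with the elementary fact that a morphism of varieties (here, a linear projection) cannot increase dimension. Write $\mathcal{V} = \overline{\mathcal{O}}$ for a nilpotent coadjoint orbit $\mathcal{O}$, so that $\dim \mathcal{V} = \dim \mathcal{O}$.

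By Lemma \ref{lem:geo}, pick an open dense subset $U \subset \mathcal{O}$ on which $pr := pr_{\mathfrak{p}_n^*}^{\mathfrak{g}_n^*}$ is injective. Being open dense in $\mathcal{O}$, the set $U$ satisfies $\dim U = \dim \mathcal{O} = \dim \mathcal{V}$. Since $pr$ is a linear map and $pr|_U$ is injective, standard dimension theory gives $\dim pr(U) = \dim U = \dim \mathcal{V}$. The inclusion $pr(U) \subseteq pr(\mathcal{V})$ then yields
\[
\dim pr(\mathcal{V}) \geq \dim pr(U) = \dim \mathcal{V}.
\]

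For the reverse inequality, $pr$ is linear, hence a morphism of algebraic varieties, so $\dim pr(\mathcal{V}) \leq \dim \mathcal{V}$. Combining the two bounds gives the claimed equality. The entire content of the corollary is thus packaged in Lemma \ref{lem:geo}; no further step is required beyond invoking it and the trivial dimension estimate for a linear image. The only real obstacle, accordingly, is proving Lemma \ref{lem:geo} itself (deferred to \S\S\S \ref{subsubsec:Pflem:geo}), which presumably exploits the explicit Jordan-form description of nilpotent orbits in $\mathfrak{g}_n^*$ to locate a generic element whose lower-triangular part is already determined by its image in $\mathfrak{p}_n^*$.
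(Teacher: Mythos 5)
Your proof is correct and is exactly the argument the paper intends: the corollary is stated there without proof as an immediate consequence of Lemma \ref{lem:geo}, the point being precisely that injectivity of $pr$ on a dense open subset $U$ of the orbit forces $\dim pr(U)=\dim U=\dim\mathcal{V}$, while the reverse inequality is the trivial dimension bound for a morphism. Nothing further is needed.
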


\begin{cor}
\label{cor:dimAVres} Let
$\pi \in \widehat{G_n}$. Then $\dim \cV(\pi|_{P_n}) \geq \dim \cV(\pi)$.
\end{cor}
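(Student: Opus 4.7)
The plan is to prove the stronger containment $\cV(\pi|_{P_n}) \supseteq \overline{pr(\cV(\pi))}$, after which Corollary \ref{cor:geo} immediately yields the desired dimension inequality. The key observation is that every analytic vector for $G_n$ is \emph{a fortiori} analytic for $P_n$, so $\pi^{\omega} \subseteq \pi|_{P_n}^{\omega}$ as subspaces of the Hilbert space, with the $\fp_n$-action on $\pi^{\omega}$ being the restriction of the $\g_n$-action. Since $\pi^{\omega}$ is dense in the Hilbert space (Remark \ref{rem:dense}) and $\pi$ is irreducible, its $U(\g_n)$-annihilator agrees with $J := Ann_{U(\g_n)}(\pi^{HC})$, and the inclusion $\pi^{\omega} \subseteq \pi|_{P_n}^{\omega}$ therefore gives
\[
Ann_{U(\fp_n)}(\pi|_{P_n}^{\omega}) \;\subseteq\; Ann_{U(\fp_n)}(\pi^{\omega}) \;=\; U(\fp_n) \cap J.
\]

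Next I would pass to the associated graded. For any ideal $J \subseteq U(\g_n)$ the formal inclusion $gr(U(\fp_n) \cap J) \subseteq S(\fp_n) \cap gr(J)$ holds, from the compatibility of the standard filtration on $U(\g_n)$ with the filtered subalgebra $U(\fp_n)$. Combining this with the previous step and taking zero sets (which reverses inclusion of ideals) yields
\[
\cV(\pi|_{P_n}) \;\supseteq\; V\bigl(S(\fp_n) \cap gr(J)\bigr).
\]

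Finally I would identify this right-hand side geometrically: for any ideal $I \subseteq S(\g_n)$, the intersection $I \cap S(\fp_n)$ is the kernel of the composition $S(\fp_n) \hookrightarrow S(\g_n) \twoheadrightarrow S(\g_n)/\sqrt{I}$, so its zero set in $\fp_n^{\ast}$ is exactly the Zariski closure of $pr(V(I))$. Applying this with $I = gr(J)$ identifies the right-hand side with $\overline{pr(\cV(\pi))}$, whose dimension equals $\dim \cV(\pi)$ by Corollary \ref{cor:geo}. I do not foresee any genuine obstacle beyond what has already been established in Lemma \ref{lem:geo}: the remaining steps are routine bookkeeping in annihilators, filtered-graded algebra, and images of affine varieties under linear projections.
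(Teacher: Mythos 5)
Your argument is correct and follows the paper's own proof essentially step for step: the inclusion $\pi^{\omega}\subseteq(\pi|_{P_n})^{\omega}$, the comparison of annihilators $Ann_{U(\fp_n)}((\pi|_{P_n})^{\omega})\subseteq U(\fp_n)\cap Ann_{U(\g_n)}(\pi^{\omega})$, passage to associated graded ideals inside $S(\fp_n)$, and the identification of the resulting zero set with $\overline{pr(\cV(\pi))}$, whose dimension equals $\dim\cV(\pi)$ by Lemma \ref{lem:geo}/Corollary \ref{cor:geo} (using that $\cV(\pi)$ is a single nilpotent orbit closure). The only nitpick is that the kernel of $S(\fp_n)\hookrightarrow S(\g_n)\twoheadrightarrow S(\g_n)/\sqrt{I}$ is $\sqrt{I}\cap S(\fp_n)$ rather than $I\cap S(\fp_n)$, but both have the same zero set, so this does not affect the conclusion.
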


\begin{proof}
Let
$I=Ann_{U(\mathfrak{g}_{n})}\pi^{\omega}$ and
$J=Ann_{U(\mathfrak{p}_{n})}(\pi|_{P_{n}})^{\omega}$.
Since $(\pi|_{P_{n}})^{\omega}\supset\pi^{\omega}$ we have $J\subset
Ann_{U(\mathfrak{p}_{n})}\pi^{\omega}=I\cap U(\mathfrak{p}_{n})$ and
hence we get
\[
{gr(J)}\subset gr(I\cap U(\mathfrak{p}_{n}))\subset{gr(I)
}\cap S(\mathfrak{p}_{n}).
\]
Since $\mathcal{V}(\pi)$ is the closure of a nilpotent coadjoint
orbit, by the previous Corollary we conclude
\[
\dim\mathcal{V}(\pi|_{P_n})\geq\dim \text{Zeroes} \left(  {gr(I)}\cap
S(\mathfrak{p}_{n})\right)  =\dim pr_{\mathfrak{p}_{n}^{\ast}}^{\mathfrak{g}%
_{n}^{\ast}}(\mathcal{V}(\pi))=\dim{\mathcal{V}(\pi).}%
\]

\end{proof}

Now we want to use Theorem \ref{thm:GabJo} to bound $\dim\mathcal{V}
(\pi|_{P_{n}})$. In order to do that we find a finitely generated
$U(\mathfrak{p}_{n})$-module which is non-degenerately paired with
$(\pi|_{P_{n}})^{\omega}$ and therefore has the same annihilator.

\begin{proof}[Proof of Lemma \ref{lem:AVDerFin}]

We will use Lemma \ref{lem:A1Step}.
Let $\sigma:=  A\pi\otimes\psi\otimes1$ and $\tau:=  |\det|^{(1-d)/2}(\overline{(A\pi)^{HC}}\otimes\psi\otimes 1)$ be representations of $S_{n-k,k}$ and $\fs_{n-k,k}$ respectively,
where $\overline{(A\pi)^{HC}}$ denotes the complex conjugate representation to $(A\pi)^{HC}$. Then $\tau$ is equivariantly non-degenerately
paired with $\widetilde{\sigma} = |\det|^{(d-1)/2} \sigma$.

By Lemma \ref{lem:A1Step}, $\pi|_{P_{n}%
}=Ind_{S_{n-k,k}}^{P_{n}}(\sigma)$ and thus, by Lemma
\ref{lem:IndPairing}, $(\pi|_{P_{n}})^{\omega}$ is equivariantly non-degenerately
paired with  a quotient of $U(\fp_n) \otimes _{U(\fs_{n-k,k})} \tau$. Denote this quotient by $L$. Then  $\cV(\pi|_{P_{n}}) = \cV(L)$.
Note that twist by a character does not effect annihilator variety and Gelfand-Kirillov dimension. Neither does exterior tensor product with a character.

From Lemma \ref{lem:VogTensor}
we obtain :
\[
\operatorname{GKdim}(L)\leq \operatorname{GKdim}
(\tau)+\dim \fp_n - \dim \fs_{n-k,k} =\operatorname{GKdim}(A\pi^{HC})+(2n-d)(d-1)/2.
\]
By Theorem \ref{thm:GabJo} we have $\dim\mathcal{V}(L) \leq
2\operatorname{GKdim}(L)$. By Theorem \ref{thm:Vog} we have $2 \operatorname{GKdim}(A\pi^{HC}) = \dim\mathcal{V}(A\pi)$.
By Corollary \ref{cor:dimAVres} we have
$\dim\mathcal{V}(\pi)\leq\dim\mathcal{V} (\pi|_{P_{n}})$. Altogether we
have
\begin{multline*}
\dim\mathcal{V}(\pi)\leq\dim\mathcal{V} (\pi|_{P_{n}})=\dim\mathcal{V}
(L)\leq2\operatorname{GKdim}(L)\leq\\
\leq2\operatorname{GKdim}(A\pi^{HC})+(2n-d)(d-1)=\dim\mathcal{V} (A\pi
)+(2n-d)(d-1).
\end{multline*}

\end{proof}

\subsubsection{Proof of Lemma \ref{lem:geo} } \label{subsubsec:Pflem:geo}

Identify $\mathfrak{g}_{n}^{\ast}$ with $\mathfrak{g}_{n}$ using the trace
form; then $\mathfrak{p}_{n}^{\ast}$ consists of matrices whose last
\emph{column} is zero, and $pr\left(  A\right)  $ replaces the last column of
$A$ by zero. Now $\mathcal{O}$ corresponds to a nilpotent orbit in
$\mathfrak{g}_{n}$; let $k$ denote the size of the biggest Jordan block in
$\mathcal{O}$ and define
\[
U:=\{A\in\mathcal{O}\mid e_{n}^{\prime}A^{k-1}\neq0\}\text{ where }%
e_{n}^{\prime}=\left(  0,\cdots,0,1\right)
\]
Then $U$ is an open dense subset of $\mathcal{O}$, and we will show $pr|_{U}$
is injective. Suppose $A,B\in U$ with $pr\left(  A\right)  =pr\left(
B\right)  $, then $A$ and $B$ differ only in the last column and so
\[
B=A+v e_{n}^{\prime}
\]
where $v$ is some column vector; it suffices to prove that $v=0$.

We first prove by induction that $e_{n}^{\prime}A^{i}v=0$ for any $i\geq0$.
Since $A$ and $B$ are nilpotent we have%
\[
0=\operatorname{Tr}B=\operatorname{Tr}A+\operatorname{Tr}v e_{n}^{\prime}
=e_{n}^{\prime}v
\]
which proves the claim for $i=0$. Now suppose the claim holds for $i<l$, then%
\begin{equation}
B^{l+1}=(A+v e_{n}^{\prime})^{l+1}=A^{l+1}+
\sum\nolimits_{j=0}^{l} A^{j}\left(  v e_{n}^{\prime}\right)  A^{l-j}+\cdots\label{=Bl}
\end{equation}
Each omitted term in (\ref{=Bl}) has at least two factors of the form $v
e_{n}^{\prime}$, hence at least one factor of the form $e_{n}^{\prime}%
A^{i}v$ for some $0 \leq i<l$ , which vanishes by the induction hypothesis.
Now taking trace in (\ref{=Bl}) we get%
\[
0=
\sum\nolimits_{j=0}^{l}
\operatorname{Tr}\left(  A^{j}v e_{n}^{\prime}A^{l-j}\right)  =\left(
l+1\right)  e_{n}^{\prime}A^{l}v
\]
which implies the claim for $i=l$, and by induction for all $i$.

Suppose now by way of contradiction that $v\neq0$ and let $m\geq0$ be the
largest integer such that $A^{m}v\neq0$. Substitute $l=k-1$ in (\ref{=Bl});
since $A^{k}=B^{k}=0$ we get
\begin{equation}
0=\sum\nolimits_{j=0}^{k-1}
A^{j}v e_{n}^{\prime}A^{k-1-j}=%
\sum\nolimits_{j=0}^{m}
A^{j}v e_{n}^{\prime}A^{k-1-j} \label{=Bk}%
\end{equation}
Note that $v,Av,...,A^{m}v$ are linearly independent; indeed suppose
$\sum\nolimits_{j=i}^{m}
c_{j}A^{j}v=0$ with $c_{i}\neq0$, then multiplying by $A^{m-i}$ we deduce
$c_{i}A^{m}v=0$, which is a contradiction. Therefore we can choose a row
vector $\phi$ such that $\phi A^{m}v=1$ but $\phi A^{j}%
v=0$ for any $j<m$. Multiplying (\ref{=Bk}) by $\phi$ on the left we
get $0=e_{n}^{\prime}A^{k-1-m}$ which contradicts the assumption that
$e_{n}^{\prime}A^{k-1}\neq0$.
\proofend

\begin{remark}
Using 
\cite[3.1;4.1-4.2]{Ber} one can show that $U$ is a single $P_n$-orbit. 
\end{remark}



\section{Computation of adduced representations for (almost) all unitary
representations}

\label{sec:CompDer}

It is an interesting and important problem to explicitly compute the adduced representations of all irreducible unitary representations of $G_n$. The answer has been conjectured in \cite{SaSt} and in
the present paper we make substantial progress towards the proof of this conjecture.

\subsection{Vogan Classification} \label{subsec:VogClas}

By the Vogan classification \cite{Vog-class}, irreducible unitary
representations of $G_n$ are Bernstein-Zelevinsky
(BZ) products of the form
\[
\pi=\pi_{1}\times\cdots\times\pi_{k}%
\]
where each $\pi_{i}$ is one of the following basic unitary representations:

\begin{enumerate}[(a)]
\item \label{it:Char}\emph{A one-dimensional unitary character of some }$G_m$. Such a character is of the form
\[
x\mapsto\left(  \mathrm{sgn}\det x\right)  ^{\varepsilon}\left\vert \det
x\right\vert ^{z},\varepsilon\in\left\{  0,1\right\}  ,z\in\mathbb{C}%
\]
and we shall denote it by $\chi\left(  m,\varepsilon,z\right)  $. This
character is unitary if $z$ is imaginary, i.e.
\[
z=it,t\in\mathbb{R}.
\]

\item \label{it:StCS}\emph{A Stein complementary series representation of some }$G_{2m}$, twisted by a unitary character. The Stein
representations are complementary series of the form%
\[
\sigma\left(  2m,s\right)  =\chi\left(  m,0,s\right)  \times\chi\left(
m,0,-s\right), s\in\left(  0,1/2\right)
\]
and we write $\sigma\left(  2m,s;\varepsilon,it\right)  $ to denote its twist
by$\chi\left(  2m,\varepsilon,it\right)  $.

\item \label{it:Speh} \emph{A Speh representation of some }$G_{2m}$,
twisted by a unitary character. As shown in \cite{BSS} and \cite{SaSt} the
Speh representation $\delta\left(  2m,k\right)  $ is the unique irreducible
submodule of%
\begin{equation}
\chi\left(  m,0,k/2\right)  \times\chi\left(  m,\varepsilon_{k+1},-k/2\right)
,k\in\mathbb{N},\varepsilon_{k+1}\equiv k+1\left(  \operatorname{mod}2\right)  \label{=BSS}%
\end{equation}

and we write $\delta\left(  2m,k;it\right)  $ to denote its twist by
$\chi\left(  2m,0,it\right)  $.

\item  \label{it:SpehCS} \emph{A Speh complementary series representation of some }$G_{4m}$, twisted by a unitary character. The Speh
complementary series representation is%
\[
\psi\left(  4m,k,s\right)  =\delta\left(  2m,k;0,s\right)  \times\delta\left(
2m,k;0,-s\right), s\in\left(  0,1/2\right)
\]
and we write $\psi\left(  4m,k,s;it\right)  $ to denote its twist by
$\chi\left(  4m,0,it\right)  $.
\end{enumerate}

The reader might well ask why we do not consider twists in (\ref{it:Speh}) and (\ref{it:SpehCS}) for
$\varepsilon=1$. The reason is the following:

\begin{lemma}
Speh representation and their complementary series are \emph{unchanged} if we
twist them by the sign character.
\end{lemma}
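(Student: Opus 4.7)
The plan is to reduce the complementary series claim to the Speh claim via the Bernstein--Zelevinsky product structure, and then establish the Speh case using Vogan's classification together with Theorem \ref{thm:Main}. For the reduction, the sign character of $G_{4m}$ restricts to the Levi $G_{2m}\times G_{2m}$ of $P_{(2m,2m)}$ as $\sgn\otimes\sgn$ and commutes with the unramified twist $|\det|^{\pm s}$, so
\[
\psi(4m,k,s)\otimes\sgn \;=\; \bigl(\delta(2m,k)\otimes\sgn\bigr)\otimes|\det|^{s} \;\times\; \bigl(\delta(2m,k)\otimes\sgn\bigr)\otimes|\det|^{-s},
\]
which equals $\psi(4m,k,s)$ as soon as $\delta(2m,k)\otimes\sgn = \delta(2m,k)$ is proved.

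For the Speh case, I would first record three invariants preserved by the $\sgn$-twist: the infinitesimal character (since $\sgn$ is discrete, hence has zero derivative on $\fg_{2m}$); the central character (since $\det(cI_{2m})=c^{2m}>0$, making $\sgn$ trivial on $Z(G_{2m})=\R^\times I_{2m}$); and the annihilator variety (for the same reason as the infinitesimal character). From the embedding (\ref{=BSS}), Theorem \ref{thm:BarBoz} and Proposition \ref{prop:LSInd} identify the associated orbit of $\delta(2m,k)$ as the Richardson orbit $\cO_{2^m}$ of $P_{(m,m)}$, so combined with Theorem \ref{thm:Main} one obtains $AP(\delta(2m,k))=2^m$. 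By Vogan's classification \cite{Vog-class}, $\delta(2m,k)\otimes\sgn$ decomposes uniquely as a BZ product of basic representations from families (a)--(d), and by Corollary \ref{cor:BarBoz} its associated partition is the $+$-sum of the factor partitions. The partition $2^m$ admits only the trivial decomposition and $2^m = 1^m + 1^m$; the latter would force two unitary characters of $G_m$ whose infinitesimal characters are purely imaginary, contradicting the Speh infinitesimal character $(k/2)^{(m)}(-k/2)^{(m)}$. Hence $\delta(2m,k)\otimes\sgn$ is itself a single basic representation. Unitary characters have $AP=1^{2m}$; Stein complementary series $\sigma(2m,s;\epsilon,it)$ have $AP=2^m$ but infinitesimal character $(\pm s+it)^{(m)}$ with $s\in(0,1/2)$, excluding $s=k/2\ge 1/2$; Speh complementary series $\psi(4m',k',s';\epsilon,it)$ have $AP=4^{m'}\ne 2^m$; and the only Speh $\delta(2m,k';0,it)$ matching both invariants is $\delta(2m,k)$ itself. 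Thus $\delta(2m,k)\otimes\sgn=\delta(2m,k)$.

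The main obstacle is the case analysis in the last step: Stein complementary series and Speh share $AP=2^m$, so one must pair the annihilator variety with the parameter-range constraint $s\in(0,1/2)$ versus $k/2\ge 1/2$ to separate them. Without Theorem \ref{thm:Main}, the annihilator-variety refinement needed to rule out decompositions like $1^m+1^m$ (two characters) would not be available, and the whole argument would collapse.
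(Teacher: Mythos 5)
Your reduction of the complementary-series case to the Speh case via compatibility of the sign twist with Bernstein--Zelevinsky products is the same as the paper's. The problem is in the core claim $\delta(2m,k)\otimes\sgn\cong\delta(2m,k)$: your argument is circular. The list of basic representations (a)--(d) that you feed into the classification already omits the $\varepsilon=1$ twists of Speh representations and their complementary series, and the paper states this lemma precisely in order to justify that omission. If you instead run the case analysis against the a priori list, which must include $\delta(2m,k;\varepsilon,it)$ for both $\varepsilon=0$ and $\varepsilon=1$, then every invariant you use (infinitesimal character, central character, annihilator variety) is preserved by the sign twist by your own opening observations, so the analysis can only conclude that $\delta(2m,k)\otimes\sgn$ is either $\delta(2m,k)$ or $\delta(2m,k)\otimes\sgn$ --- which is vacuous. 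No collection of invariants that is blind to the sign twist can ever prove the isomorphism; one needs an input that actually sees the sign. The paper supplies it by realizing $\delta(2m,k)$ as the Langlands quotient of $\delta(2,k;s_1)\times\cdots\times\delta(2,k;s_m)$ and invoking the classical fact that the discrete series of $GL(2,\R)$ are invariant under the sign twist; the product identity $\widetilde{\pi_1\times\pi_2}=\widetilde{\pi_1}\times\widetilde{\pi_2}$ then transports this invariance to the unique irreducible quotient.

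A secondary point: Theorem \ref{thm:BarBoz} computes the associated orbit of an irreducible representation that \emph{equals} a parabolically induced representation, whereas $\delta(2m,k)$ is only a proper irreducible submodule of the (generically reducible) induced representation in (\ref{=BSS}); that step therefore yields at best an inclusion of $\cV(\delta(2m,k))$ into the closure of the Richardson orbit, not equality. The identity $AP(\delta(2m,k))=2^m$ is obtained in the paper from Theorem \ref{thm:Main} together with the computation of the adduced representation of Speh representations in \cite{SaSt}, and should be cited that way. This is repairable; the circularity in the previous paragraph is the essential gap.
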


\begin{proof}
If $\pi$ is a representation of $G_n$ we denote
its sign twist by $\widetilde{\pi}=\pi\otimes\chi\left(  n,1,0\right)
$. This operation is compatible with parabolic induction in the sense that we
have%
\begin{equation}
\widetilde{\pi_{1}\times\pi_{2}}=\widetilde{\pi_{1}}\times\widetilde{\pi_{2}%
}. \label{ds}%
\end{equation}

We leave the easy verification of (\ref{ds}) to the reader.

For the group $GL\left(  2,\mathbb{R}\right)  $, the Speh representations
$\delta\left(  2,k\right)  $ are precisely the discrete series. In this case
the result $\delta\left(  2,k\right)  =\widetilde{\delta\left(  2,k\right)  }$
is well known (see 1.4.7 in \cite{Vog-book}). The general Speh representation
$\delta\left(  2m,k\right)  $ is the unique irreducible quotient (Langlands
quotient) of%
\begin{equation}
\delta\left(  2,k;s_{1}\right)  \times\cdots\times\delta\left(  2,k;s_{m}%
\right)  \text{ where }s_{i}=\frac{m+1}{2}-i \label{=Speh}%
\end{equation}
see, e.g. \cite{Vog-class}. By (\ref{ds}) the induced representation
(\ref{=Speh}) is unchanged under sign twist. Therefore so is its unique
irreducible quotient.

The result for Speh complementary series now follows from (\ref{ds}).
\end{proof}

\subsection{Annihilator
variety and rank}

\label{subsec:CompAV}


In this subsection we compute the associated partition in terms of the Vogan classification. Let
\begin{equation}
\pi=\prod\nolimits_{i=1}^{k}\chi_{i}\times\prod\nolimits_{j=1}^{l}\delta
_{j}\text{ with }n=\sum\nolimits_{i=1}^{k}p_{i}+2\sum\nolimits_{j=1}^{l}q_{j}
\label{=pi}%
\end{equation}
where $\chi_{i}$ is a character of $G_{p_{i}}$ and $\delta_{j}$ is a Speh
representation of $G_{2q_{j}}$ (perhaps twisted by a nonunitary character in
order to include complementary series). By Corollary \ref{cor:BarBoz} , to
compute $AP(\pi)  $ it suffices to determine
$AP(  \chi_{i})  $ and $AP(\delta
_{j})$. Clearly $AP(  \chi_{i})=1^i$. By \cite[Theorem 3]{SaSt}, the adduced of the Speh representation $\delta(2n,k)\in{\widehat{G}}_{2n}$ is
the Speh representation $\delta(2(n-1),k)\in{\widehat{G}}_{2n-2}$. By induction,
we obtain from Theorem \ref{thm:Main} that $AP(\delta_{j})  = 2^j.$


\begin{thm}
\label{thm:AVTadic} For $\pi$ as in (\ref{=pi}), $AP(\pi)^{t}$ has one part
of size $p_{i}$ for each $i$ and two parts
of size $q_{j}$ for each $j$. Consequently the rank (see Definition \ref{def:rank}) and the Gelfand-Kirillov dimension  of $\pi$  are given by
\begin{equation}
{\operatorname{rk}}(\pi)=n-\max(p_{i},q_{j}), \quad \operatorname{GKdim}
(\pi)=\frac{1}{2}\left(  n^{2}-\sum p_{i}^{2}-2\sum q_{j}^{2}\right)
\end{equation}
\end{thm}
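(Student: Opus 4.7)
The plan is to reduce the statement to a direct combinatorial calculation using the three ingredients already assembled in the paper: the additivity of associated partitions under Bernstein--Zelevinsky product (Corollary \ref{cor:BarBoz}), the individual computation of $AP$ on the basic building blocks, and the formulas of Proposition \ref{prop:DimOrbT} and Theorem \ref{thm:Vog} relating orbit dimension to transpose partitions and Gelfand--Kirillov dimension.

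First I would identify the associated partition of each factor. A unitary character (or, more generally, any $1$-dimensional representation) $\chi_i$ of $G_{p_i}$ has trivial action of $U(\g)$ modulo scalars, so $\cV(\chi_i)=\{0\}$ and thus $AP(\chi_i)=1^{p_i}$, whose transpose is the single row $(p_i)$. For the Speh factor $\delta_j$ the paragraph preceding the theorem already records, via \cite[Theorem 3]{SaSt} and Theorem \ref{thm:Main} applied inductively, that $AP(\delta_j)=2^{q_j}$; its transpose is two rows of length $q_j$. I would briefly recall this step for completeness, since it is the place where Theorem \ref{thm:Main} is used.

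Next I would apply Corollary \ref{cor:BarBoz} repeatedly to write
\[
AP(\pi)\;=\;\sum\nolimits_{i=1}^{k}AP(\chi_i)\;+\;\sum\nolimits_{j=1}^{l}AP(\delta_j).
\]
By the Remark following Definition \ref{l+m}, transposition turns this sum into concatenation of columns followed by descending reordering. Hence $AP(\pi)^t$ is the partition rearrangement of the concatenated tuple $(p_1,\dots,p_k,q_1,q_1,\dots,q_l,q_l)$, which gives exactly one part of size $p_i$ for each $i$ and two parts of size $q_j$ for each $j$, proving the first assertion.

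The numerical consequences are then immediate. The length of $AP(\pi)$ equals the largest part of $AP(\pi)^t$, namely $\max(p_i,q_j)$, and since $\cV(\pi)=\overline{\cO_{AP(\pi)}}$ the maximum rank of a matrix in $\cV(\pi)$ equals $n$ minus this length, yielding ${\rm rk}(\pi)=n-\max(p_i,q_j)$. For the Gelfand--Kirillov dimension I would invoke Theorem \ref{thm:Vog} to get $\operatorname{GKdim}(\pi)=\tfrac{1}{2}\dim\cO_{AP(\pi)}$ and then Proposition \ref{prop:DimOrbT}, which gives
\[
\dim\cO_{AP(\pi)}\;=\;n^{2}-\sum\nolimits_{j}(AP(\pi)^{t})_{j}^{2}\;=\;n^{2}-\sum\nolimits_{i}p_{i}^{2}-2\sum\nolimits_{j}q_{j}^{2}.
\]
Since every step is either a direct citation or a one-line combinatorial manipulation, there is no real obstacle here; the only place where nontrivial input is needed is the computation $AP(\delta_j)=2^{q_j}$, but this has already been justified in the text preceding the statement.
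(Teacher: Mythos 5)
Your argument is correct and is essentially the paper's own proof: the paper establishes $AP(\chi_i)=1^{p_i}$ and $AP(\delta_j)=2^{q_j}$ (via \cite[Theorem 3]{SaSt} and Theorem \ref{thm:Main} applied inductively) in the paragraph preceding the theorem, and then the statement follows from Corollary \ref{cor:BarBoz}, the column-concatenation description of $\lambda+\mu$, Proposition \ref{prop:DimOrbT}, and Theorem \ref{thm:Vog}, exactly as you lay out. Your write-up just makes explicit the combinatorial and dimension-counting steps the paper leaves implicit.
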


\begin{cor}
\label{cor:SmallRep} Let $\pi\in{\widehat{G}}_{n}$ and let $k<n/2$. Then
${\operatorname{rk}}(\pi)=k$ if and only if there
exist a representation $\sigma\in{\widehat{G}}_{k}$ and a character $\chi
\in{\widehat{G}}_{n-k}$ such that $\pi=\sigma\times\chi$.
\end{cor}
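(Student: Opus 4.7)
The plan is to reduce the corollary to a direct combinatorial check using Theorem \ref{thm:AVTadic}, which expresses the rank in terms of the Vogan factorization. Writing any $\pi \in \widehat{G_n}$ as in \eqref{=pi}, namely $\pi = \prod_{i=1}^{k'} \chi_i \times \prod_{j=1}^{l} \delta_j$ with $\chi_i$ a character of $G_{p_i}$ and $\delta_j$ a (possibly twisted) Speh representation of $G_{2q_j}$, Theorem \ref{thm:AVTadic} gives $\operatorname{rk}(\pi) = n - \max(p_i, q_j)$, where the maximum is taken over all indices.

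For the forward implication, I would assume $\operatorname{rk}(\pi) = k$, so $\max(p_i, q_j) = n-k$. The key observation is that the hypothesis $k < n/2$ forces $n-k > n/2$, while the decomposition $n = \sum p_i + 2\sum q_j$ implies $2q_j \leq n$ for each $j$, hence $q_j \leq n/2 < n-k$. Therefore the maximum must be realized by some $p_{i_0} = n-k$, meaning the corresponding factor $\chi_{i_0}$ is a character of $G_{n-k}$. Setting $\chi := \chi_{i_0}$ and letting $\sigma$ be the BZ product of the remaining factors (which is unitary and lives on $G_k$ by the Vogan classification, and where commutativity of $\times$ on unitary factors lets us rearrange), we obtain $\pi = \sigma \times \chi$, as required.

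For the reverse implication, suppose $\pi = \sigma \times \chi$ with $\sigma \in \widehat{G_k}$ and $\chi$ a character of $G_{n-k}$. Expanding $\sigma$ in its own Vogan form produces parameters $p_i' \leq k$ (from character factors) and $q_j' \leq k/2$ (from Speh factors), while appending $\chi$ contributes one new parameter equal to $n-k$. Because $k < n/2$, we have $n-k > k \geq k/2$, so this new parameter dominates all the others, and the maximum over the Vogan parameters of $\pi$ equals $n-k$. Theorem \ref{thm:AVTadic} then gives $\operatorname{rk}(\pi) = n - (n-k) = k$.

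No substantive obstacle is anticipated; the argument reduces entirely to the inequality $2q_j \leq n < 2(n-k)$, which is exactly the content of the hypothesis $k < n/2$. This inequality is essential: were $k = n/2$ allowed, a pure Speh representation of $G_n$ would have rank $k$ without being a product of the stated form, so the sharp threshold built into the statement is precisely what makes the combinatorial extraction of the character factor possible.
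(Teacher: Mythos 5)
Your argument is correct and is essentially the paper's own (the paper states Corollary \ref{cor:SmallRep} as an immediate combinatorial consequence of Theorem \ref{thm:AVTadic}, exactly as you do). The only point worth making explicit is that a character factor $\chi_{i_0}$ with $p_{i_0}=n-k>n/2$ cannot be one of the nonunitary characters coming from a Stein complementary series, since those occur in pairs on some $G_m$ with $2m\leq n$, hence $m\leq n/2<n-k$ --- the same size bound you use for the Speh factors --- so $\chi_{i_0}$ is genuinely unitary and the remaining factors regroup into an element of $\widehat{G}_k$.
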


\begin{remark}\label{rem:SmallRep}
By \cite[Part II, Corollary 3.2]{Sca}, an analogous statement holds for Howe
rank. This gives an independent proof of the statement that for any $\pi
\in{\widehat{G}}_{n}$, $$\text{ Howe rank }(\pi)=\min({\operatorname{rk}}(\pi),\lfloor
n/2\rfloor).$$
\end{remark}

The following theorem follows from \cite{BSS, SaSt}.

\begin{thm}
For the Speh representation $\delta(2n,k)\in{\widehat{G}}_{2n}$ there exist
degenerate principal series representations $\pi_{k}$ and $\pi_{-k}$, induced
from certain characters of the standard parabolic subgroup given by the
partition $(n,n)$, an embedding $i \colon \delta(2n,k)\hookrightarrow\pi_{k}$ and an
epimorphism $p \colon \pi_{-k}\twoheadrightarrow\delta(2n,k)$.
\end{thm}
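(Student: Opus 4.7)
The plan is to take $\pi_k$ to be the degenerate principal series appearing on the right-hand side of (\ref{=BSS}), namely $\pi_k := \chi(n,0,k/2)\times\chi(n,\varepsilon_{k+1},-k/2)$, which is normalized parabolic induction from $P_{(n,n)}$ of the character $\chi(n,0,k/2)\otimes\chi(n,\varepsilon_{k+1},-k/2)$. The embedding $i\colon\delta(2n,k)\hookrightarrow\pi_k$ is then the very definition of $\delta(2n,k)$ recorded in (\ref{=BSS}); no additional argument is needed for this half of the statement.

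For $\pi_{-k}$ I would take the contragredient of $\pi_k$. Using that normalized induction satisfies $(\sigma_1\times\sigma_2)^\vee\cong\sigma_2^\vee\times\sigma_1^\vee$ and $\chi(n,\varepsilon,z)^\vee=\chi(n,\varepsilon,-z)$, one may set $\pi_{-k}:=\chi(n,\varepsilon_{k+1},k/2)\times\chi(n,0,-k/2)$, again a degenerate principal series from $P_{(n,n)}$. Dualizing $i$ yields a surjection $\pi_{-k}\cong\pi_k^\vee\twoheadrightarrow\delta(2n,k)^\vee$, and one finishes with the identification $\delta(2n,k)^\vee\cong\delta(2n,k)$. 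This self-contragredience follows from the unitarity of $\delta(2n,k)$ together with the reality of its infinitesimal character, which can be read off directly from the real parameters of the inducing characters in (\ref{=BSS}).

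The main obstacle will be the careful bookkeeping of sign characters and the clean verification of the self-contragredience $\delta(2n,k)^\vee\cong\delta(2n,k)$. An alternative route that bypasses this step is to produce $p$ directly as (a nonzero multiple of) the standard long intertwining operator $M(w_0)\colon\pi_{-k}\to\pi_k$ attached to the nontrivial Weyl element exchanging the two $GL_n$-blocks of $L_{(n,n)}$. Since $\pi_k$ has $\delta(2n,k)$ as its unique irreducible submodule, the image of any such intertwiner necessarily lies in $\delta(2n,k)$; it then suffices to check that $M(w_0)\neq 0$ (for example by evaluating on a spherical or $K$-finite vector, or via a meromorphic-continuation argument on the inducing parameter), whereupon the image equals $\delta(2n,k)$ and gives the desired quotient map $p$. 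In either approach the substantive input is the characterization of $\delta(2n,k)$ as the socle of $\pi_k$ already established in \cite{BSS,SaSt}; the rest is formal manipulation with contragredients or intertwining operators.
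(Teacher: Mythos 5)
The paper gives no internal proof of this statement at all---it simply records it as following from \cite{BSS,SaSt}---so your proposal is a reconstruction rather than an alternative to a written argument, and its main line is the natural one: the embedding half is exactly the socle statement (\ref{=BSS}) imported from \cite{BSS,SaSt}, and the quotient half is obtained from it by dualizing. That strategy is sound. However, three points need tightening. First, for normalized induction from a fixed standard parabolic one has $(\sigma_1\times\sigma_2)^\vee\cong\sigma_1^\vee\times\sigma_2^\vee$ with \emph{no} swap of factors; the swapped identity $(\sigma_1\times\sigma_2)^\vee\cong\sigma_2^\vee\times\sigma_1^\vee$ that you invoke is false in general for reducible induced representations (the two sides share composition factors but need not be isomorphic), so your specific $\pi_{-k}=\chi(n,\varepsilon_{k+1},k/2)\times\chi(n,0,-k/2)$ is not justified as being $\pi_k^\vee$. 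The harmless fix is to take $\pi_{-k}:=\pi_k^\vee\cong\chi(n,0,-k/2)\times\chi(n,\varepsilon_{k+1},k/2)$, which is still a degenerate principal series attached to $P_{(n,n)}$, as the theorem requires. Second, your justification of $\delta(2n,k)^\vee\cong\delta(2n,k)$ is too thin: unitarity gives $\delta^\vee\cong\overline{\delta}$, but a real infinitesimal character does not by itself force $\overline{\delta}\cong\delta$. The clean patch is already in your setup: the inducing characters have real parameters, so they are real-valued, hence $\overline{\pi_k}\cong\pi_k$; then $\overline{\delta(2n,k)}$ is the unique irreducible submodule of $\pi_k$ and therefore equals $\delta(2n,k)$.

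Third, the alternative route via the long intertwining operator contains a genuine error: irreducibility of the socle of $\pi_k$ shows that the image of any nonzero intertwiner into $\pi_k$ \emph{contains} $\delta(2n,k)$, not that it is contained in it. To conclude that the image equals $\delta(2n,k)$ you would additionally need that $\pi_{-k}$ has $\delta(2n,k)$ as its unique irreducible quotient---which is precisely the dual statement you are trying to establish---or a Langlands-type theorem on images of standard intertwining operators adapted to these degenerate series. So discard that variant, or treat it as a consequence of the duality argument rather than a substitute for it; with the two corrections above, the contragredient argument stands and delivers exactly what the paper's citation of \cite{BSS,SaSt} is meant to supply.
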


\begin{cor}
\label{cor:UnEmb} Any irreducible unitarizable representation $\pi$ of $G_{n}$
can be presented both as a subrepresentation and as a quotient of a degenerate
principal series representation with the same annihilator variety. Those
degenerate principal series representations will be induced from characters of
the standard parabolic described by the partition which is transposed to the
partition describing $\mathcal{V}(\pi)$.
\end{cor}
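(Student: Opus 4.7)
The plan is to reduce to the Speh case, which is handled by the preceding theorem, and then glue the pieces together using exactness of parabolic induction and induction in stages.

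First, I would invoke the Vogan classification to write $\pi = \pi_{1} \times \cdots \times \pi_{k}$ as a BZ product of basic unitary pieces, and then further decompose each Stein or Speh complementary series as the BZ product of its two constituent characters or Speh representations (possibly twisted by a non-unitary character). This yields an expression $\pi = \xi_{1} \times \cdots \times \xi_{r}$ in which each $\xi_{i}$ is either a one-dimensional character of some $G_{p_{i}}$ or a (possibly twisted) Speh representation $\delta(2q_{j}, k_{j})$ of some $G_{2q_{j}}$. Using the permutation invariance of BZ products of unitarizable representations, I would reorder the factors so that the composition obtained by listing one block $p_{i}$ per character factor and two blocks $q_{j}$ per Speh factor is nonincreasing, i.e. is a partition.

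Next, for each Speh factor $\xi_{i}$ the preceding theorem supplies a degenerate principal series $\eta_{i}^{+}$ containing $\xi_{i}$ and another, $\eta_{i}^{-}$, surjecting onto $\xi_{i}$, both induced from characters of the standard parabolic $P_{(q_{j}, q_{j})}$ inside $G_{2q_{j}}$; the non-unitary $|\det|^{s}$-twist appearing in Speh complementary series commutes with the construction and presents no difficulty. For each character factor $\xi_{i}$ I simply take $\eta_{i}^{\pm} := \xi_{i}$, viewed as trivially induced from $P_{(p_{i})} = G_{p_{i}}$. Since smooth parabolic induction is exact, these assemble to an embedding $\pi \hookrightarrow \eta^{+} := \eta_{1}^{+} \times \cdots \times \eta_{r}^{+}$ and an epimorphism $\eta^{-} := \eta_{1}^{-} \times \cdots \times \eta_{r}^{-} \twoheadrightarrow \pi$; by induction in stages, each $\eta^{\pm}$ is itself induced from a character of the standard parabolic $P_{\alpha}$ of $G_{n}$, where $\alpha$ is the concatenation of the block compositions of the $\eta_{i}^{\pm}$.

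It remains to identify $P_{\alpha}$ and to verify $\mathcal{V}(\eta^{\pm}) = \mathcal{V}(\pi)$. By Theorem \ref{thm:AVTadic}, the partition $\alpha$ (which is a partition thanks to our chosen ordering) coincides with the transpose $AP(\pi)^{t}$, so $P_{\alpha}$ is the parabolic described in the statement. For the equality of annihilator varieties, Theorem \ref{thm:BarBoz} gives $\mathcal{V}(\eta^{\pm})$ as the closure of $\operatorname{Ind}_{\mathfrak{l}_{\alpha}}^{\mathfrak{g}_{n}}(\{0\})$, and iterating Proposition \ref{prop:LSInd} on the zero orbits $\mathcal{O}_{1^{\alpha_{i}}}$ of the Levi factors computes this as $\mathcal{O}_{\lambda}$ with $\lambda^{t} = \alpha$, namely $\lambda = AP(\pi)$. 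The main point requiring explicit care is the exactness of smooth-Fr\'echet parabolic induction, so that both the embedding and the surjection transport cleanly through induction in stages; this is standard but worth flagging.
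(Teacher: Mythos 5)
Your proposal follows exactly the route the paper intends: the corollary is stated without proof as an immediate consequence of the preceding theorem, and the implicit argument is precisely your decomposition of $\pi$ into characters and (possibly twisted) Speh factors, the factorwise embedding/surjection, induction in stages, and the identification of the parabolic via Theorem \ref{thm:AVTadic}. The handling of the non-unitary twists and of the reordering of blocks is fine (the two halves of each complementary series have equal block size, so they can be kept adjacent while the genuinely unitary factors are permuted).

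The one step that needs repair is the computation of $\mathcal{V}(\eta^{\pm})$. Theorem \ref{thm:BarBoz} is stated for an \emph{irreducible} representation that is parabolically induced, whereas the degenerate principal series $\eta^{\pm}$ are in general reducible (indeed the whole point is that $\pi$ is a proper submodule or quotient of them), so you cannot invoke it directly. The conclusion is still correct, and the fix is standard: since $\pi$ is a subquotient of $\eta^{\pm}$ one has $Ann(\eta^{\pm})\subseteq Ann(\pi)$ and hence $\mathcal{V}(\pi)\subseteq\mathcal{V}(\eta^{\pm})$; in the other direction, the annihilator variety of any module induced from a character of $P_{\alpha}$ is contained in the closure of the Richardson orbit $Ind_{\mathfrak{l}_{\alpha}}^{\mathfrak{g}_{n}}(\{0\})$, which your iteration of Proposition \ref{prop:LSInd} identifies as $\overline{\mathcal{O}_{\lambda}}$ with $\lambda^{t}=\alpha$; since Theorem \ref{thm:AVTadic} gives $\mathcal{V}(\pi)=\overline{\mathcal{O}_{\lambda}}$ for the same $\lambda$, the two inclusions squeeze to equality. (This is also consistent with the remark following the corollary, which asserts that the \emph{other} Jordan--H\"older constituents of $\eta^{\pm}$ have strictly smaller annihilator varieties.)
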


\begin{rem}
In \cite{AGS} we show, using \cite{BSS}, that all other Jordan-Holder
constituents of the degenerate principal series representations mentioned
above will have smaller annihilator varieties.
\end{rem}


\subsection{Infinitesimal characters - general considerations}

In this subsection only, we let $\mathfrak{g}$ denote an arbitrary complex
reductive Lie algebra. We fix a Cartan subalgebra $\mathfrak{h}$ and a Borel
subalgebra $\mathfrak{b}=\mathfrak{h}+\mathfrak{n}$, and let $W$ denote the
Weyl group of $\mathfrak{g}$. Let $Z\left(  \mathfrak{g}\right)  $ denote the
center of the universal enveloping algebra $U\left(  \mathfrak{g}\right)  $.
Then by the Harish-Chandra homomorphism we have $Z\left(  \mathfrak{g}\right)
\approx S\left(  \mathfrak{h}\right)  ^{W}$. Thus each $\lambda\in
\mathfrak{h}^{\ast}$ determines a character $\chi_{\lambda}$ of $Z\left(
\mathfrak{g}\right)  $ with $\chi_{w\lambda}=\chi_{\lambda}$ for all $w\in W$.

The Harish-Chandra homomorphism is a special case of the following more
general construction. Let $\mathfrak{q}=\mathfrak{l}+\mathfrak{u}$ be a
standard parabolic subalgebra containing $\mathfrak{b}$ so that
$\mathfrak{h\subset l}$ and $\mathfrak{u\subset n}$. Then we have a triangular
decomposition $\mathfrak{g}=\mathfrak{u}+\mathfrak{l}+\overline{\mathfrak{u}}%
$, and by the PBW theorem the universal enveloping algebra of $\mathfrak{g}$
can be decomposed as follows:%
\begin{equation}
U\left(  \mathfrak{g}\right)  =U\left(  \mathfrak{u}\right)  \otimes U\left(
\mathfrak{l}\right)  \otimes U\left(  \overline{\mathfrak{u}}\right)
=U\left(  \mathfrak{l}\right)  \oplus\left[  \mathfrak{u}U\left(
\mathfrak{g}\right)  \mathfrak{+}U\left(  \mathfrak{g}\right)  \overline
{\mathfrak{u}}\right]  \label{=triang}%
\end{equation}
Let $\mathcal{P=P}_{\mathfrak{l}}^{\mathfrak{g}}$ denote the corresponding
projection from $U\left(  \mathfrak{g}\right)  $ to $U\left(  \mathfrak{l}%
\right)  $, and let $Z\left(  \mathfrak{g}\right)  $ and $Z\left(
\mathfrak{l}\right)  $ denote the centers of $U\left(  \mathfrak{g}\right)  $
and $U\left(  \mathfrak{l}\right)  $.

\begin{lemma}
\begin{enumerate}
\item $\mathcal{P}$ \ is $ad\left(  \mathfrak{l}\right)  $-equivariant.

\item $\mathcal{P}$ maps $Z\left(  \mathfrak{g}\right)  $ to $Z\left(
\mathfrak{l}\right)  $.

\item For $z\in Z\left(  \mathfrak{g}\right)  $ \ we have $z-\mathcal{P}%
\left(  z\right)  \in\mathfrak{u}U\left(  \mathfrak{g}\right)  $.
\end{enumerate}
\end{lemma}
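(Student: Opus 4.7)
The plan is to exploit the triangular PBW decomposition \eqref{=triang} together with an $\ad(\mathfrak{l})$-equivariance argument and, for the last (and hardest) part, a weight grading on $U(\mathfrak{g})$.

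For (1), I would first observe that $\mathfrak{l}$ normalizes each of $\mathfrak{u}$, $\mathfrak{l}$ and $\overline{\mathfrak{u}}$ separately (this is part of what it means for $\mathfrak{q}=\mathfrak{l}+\mathfrak{u}$ to be a standard Levi decomposition), so $\ad(\mathfrak{l})$ preserves each tensor factor in $U(\mathfrak{u})\otimes U(\mathfrak{l})\otimes U(\overline{\mathfrak{u}})$. Consequently it preserves both direct summands in \eqref{=triang}, and $\mathcal{P}$ commutes with $\ad(x)$ for every $x\in\mathfrak{l}$.

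Part (2) is then a short consequence of (1): for $z\in Z(\mathfrak{g})$ and $x\in\mathfrak{l}$, we have $\ad(x)(z)=0$, so by (1) $\ad(x)(\mathcal{P}(z))=\mathcal{P}(\ad(x)(z))=0$. Hence $\mathcal{P}(z)$ commutes with all of $\mathfrak{l}$, and since $\mathfrak{l}$ generates $U(\mathfrak{l})$ as an associative algebra, $\mathcal{P}(z)\in Z(\mathfrak{l})$.

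The main obstacle is (3), where one must rule out terms of the form $l\cdot\overline{u}$ with $\overline{u}\in\overline{\mathfrak{u}}U(\overline{\mathfrak{u}})$ and no $\mathfrak{u}$-factor, since a priori the decomposition \eqref{=triang} only says $z-\mathcal{P}(z)\in\mathfrak{u}U(\mathfrak{g})+U(\mathfrak{g})\overline{\mathfrak{u}}$. The idea is to introduce a $\mathbb{Z}$-grading: choose $H\in\mathfrak{h}$ in the center of $\mathfrak{l}$ such that $\ad(H)$ has strictly positive eigenvalues on $\mathfrak{u}$, strictly negative on $\overline{\mathfrak{u}}$, and zero on $\mathfrak{l}$ (such an $H$ exists because $\mathfrak{l}$ is a Levi complement of a standard parabolic). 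Then $\ad(H)$ equips $U(\mathfrak{g})$ with a $\mathbb{Z}$-grading in which each PBW monomial $u\cdot l\cdot\overline{u}$ with $u\in U(\mathfrak{u}),\; l\in U(\mathfrak{l}),\; \overline{u}\in U(\overline{\mathfrak{u}})$ is homogeneous of degree $\deg u+\deg\overline{u}$. Since $z\in Z(\mathfrak{g})$ commutes with $H$, only degree-$0$ PBW monomials occur in its expansion. But $\deg u\geq 0$ with equality iff $u$ is a scalar, and $\deg\overline{u}\leq 0$ with equality iff $\overline{u}$ is a scalar. So every degree-$0$ monomial in $z$ either has $u=1=\overline{u}$ (and contributes to $\mathcal{P}(z)$), or has $\deg u>0$, forcing $u\in\mathfrak{u}\,U(\mathfrak{u})$ and hence the whole monomial into $\mathfrak{u}\,U(\mathfrak{g})$. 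This proves $z-\mathcal{P}(z)\in\mathfrak{u}\,U(\mathfrak{g})$.
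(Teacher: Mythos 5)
Your proof is correct, and it is the standard argument: the paper itself gives no proof of this lemma, only a citation to Vogan's book (p.\ 118), where essentially the same reasoning appears — $\ad(\mathfrak{l})$-equivariance of $\mathcal{P}$ from the stability of the two summands in \eqref{=triang}, part (2) as a formal consequence, and part (3) via a weight/grading argument using a semisimple element central in $\mathfrak{l}$ that is positive on $\mathfrak{u}$ and negative on $\overline{\mathfrak{u}}$. The only cosmetic point is in (1): $\ad(x)$ acts by derivations, so it does not literally act on each tensor factor separately, but it does preserve $U(\mathfrak{l})$ and $\mathfrak{u}U(\mathfrak{g})+U(\mathfrak{g})\overline{\mathfrak{u}}$, which is all you use.
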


\begin{proof}
See e.g. \cite{Vog-book} P. 118.
\end{proof}

\begin{lemma}
Let $V$ be a $\mathfrak{g}$-module, then

\begin{enumerate}
\item The subspace $\mathfrak{u}V$ is $Z\left(  \mathfrak{g}\right)
$-invariant and $U\left(  \mathfrak{l}\right)  $-invariant.

\item The quotient space $V/\mathfrak{u}V$ is a $Z\left(  \mathfrak{g}\right)
$-module and a $U\left(  \mathfrak{l}\right)  $-module.

\item For $z\in Z\left(  \mathfrak{g}\right)  $ the actions of $z$ and
$\mathcal{P}\left(  z\right)  $ agree on $V/\mathfrak{u}V$.
\end{enumerate}
\end{lemma}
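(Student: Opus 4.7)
The plan is to verify the three parts in order, using only the triangular decomposition \eqref{=triang}, the fact that $\mathfrak{l}$ normalizes $\mathfrak{u}$, and the third statement of the preceding lemma, namely $z-\mathcal{P}(z)\in\mathfrak{u}U(\mathfrak{g})$ for $z\in Z(\mathfrak{g})$.

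For part (1), I would check the two invariance properties separately. The $Z(\mathfrak{g})$-invariance of $\mathfrak{u}V$ is immediate from centrality: for $z\in Z(\mathfrak{g})$, $X\in\mathfrak{u}$, and $v\in V$, one has $z(Xv)=(zX)v=(Xz)v=X(zv)\in \mathfrak{u}V$. For the $U(\mathfrak{l})$-invariance it is enough to check $\mathfrak{l}$-invariance. Since $\mathfrak{q}=\mathfrak{l}+\mathfrak{u}$ is a parabolic subalgebra with nilradical $\mathfrak{u}$, we have $[\mathfrak{l},\mathfrak{u}]\subset\mathfrak{u}$, so for $Y\in\mathfrak{l}$ and $X\in\mathfrak{u}$, $v\in V$,
\[
Y(Xv)=[Y,X]v+X(Yv)\in \mathfrak{u}V+\mathfrak{u}V=\mathfrak{u}V .
\]

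Part (2) is then an immediate consequence of (1): both $Z(\mathfrak{g})$ and $U(\mathfrak{l})$ preserve the submodule $\mathfrak{u}V$, so they descend to well-defined actions on the quotient $V/\mathfrak{u}V$.

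For part (3), the key input is the third assertion of the preceding lemma, which says that for every $z\in Z(\mathfrak{g})$ the element $z-\mathcal{P}(z)$ lies in $\mathfrak{u}U(\mathfrak{g})$. Consequently, for any $v\in V$,
\[
(z-\mathcal{P}(z))\,v\ \in\ \mathfrak{u}U(\mathfrak{g})\,v\ \subset\ \mathfrak{u}V,
\]
so that $z\cdot v\equiv \mathcal{P}(z)\cdot v\pmod{\mathfrak{u}V}$. Hence $z$ and $\mathcal{P}(z)$ induce the same operator on $V/\mathfrak{u}V$, as required.

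There is no real obstacle here; each item is a short formal manipulation once the previous lemma is in hand. The only mild subtlety is making sure in (1) that the two separate invariance statements are treated on the correct side (one uses centrality, the other uses the normalizer property $[\mathfrak{l},\mathfrak{u}]\subset\mathfrak{u}$), and in (3) that the relation $z-\mathcal{P}(z)\in\mathfrak{u}U(\mathfrak{g})$ is applied with $\mathfrak{u}$ on the \emph{left}, so that the resulting element of $V$ indeed lies in $\mathfrak{u}V$ rather than only in $U(\mathfrak{g})\mathfrak{u}\cdot v$.
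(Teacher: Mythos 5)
Your proof is correct and follows exactly the route the paper intends: parts (1) and (2) are the straightforward verifications (centrality for $Z(\mathfrak{g})$, the normalizer property $[\mathfrak{l},\mathfrak{u}]\subset\mathfrak{u}$ for $U(\mathfrak{l})$), and part (3) is precisely the application of the previous lemma's relation $z-\mathcal{P}(z)\in\mathfrak{u}U(\mathfrak{g})$. Nothing is missing; you have simply written out the details the paper leaves implicit.
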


\begin{proof}
Parts 1 and 2 are straightforward. Part 3 follows from the previous lemma.
\end{proof}

We say that a $\mathfrak{g}$-module $V$ has infinitesimal character $\lambda$
if each $z\in Z\left(  \mathfrak{g}\right)  $ acts by the scalar
$\chi_{\lambda}\left(  z\right)  $. We say that $V$ has generalized
infinitesimal character $\chi_{\lambda}$ if there is an integer $n$ such that
$\left(  z-\chi_{\lambda}\left(  z\right)  \right)  ^{n}$ acts by $0$ for all
$z\in Z\left(  \mathfrak{g}\right)  $. We say that $V$ is $Z\left(
\mathfrak{g}\right)  $-finite if $V$ is annihilated by an ideal of finite
codimension in $Z\left(  \mathfrak{g}\right)  $. If $V$ is $Z\left(
\mathfrak{g}\right)  $-finite then $V$ decomposes as a finite direct sum
\[
V=V_{1}\oplus\cdots\oplus V_{k}%
\]
where each $V_{i}$ has generalized infinitesimal character.

\begin{cor}
[Casselman - Osborne]\label{CO} Let $V$ be a $Z\left(  \mathfrak{g}\right)
$-finite $\mathfrak{g}$-module, then $V/\mathfrak{u}V$ is a $Z\left(
\mathfrak{l}\right)  $-finite $\mathfrak{l}$-module. Moreover if the
generalized infinitesimal character $\chi_{\mu}$ occurs in $V/\mathfrak{u}V$,
then there exists $\lambda\in\mathfrak{h}^{\ast}$ such that

\begin{enumerate}
\item the generalized infinitesimal character $\chi_{\lambda}$ occurs in $V$;

\item $\mu=\lambda+\rho\left(  \mathfrak{u}\right)  $ where $\rho\left(
\mathfrak{u}\right)  $ is half the sum of the roots of $\mathfrak{h}$ in
$\mathfrak{u.}$
\end{enumerate}
\end{cor}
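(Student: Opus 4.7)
The plan is to derive both conclusions from the two preceding lemmas---namely that $\mathcal{P}$ sends $Z(\mathfrak{g})$ into $Z(\mathfrak{l})$, and that $z$ and $\mathcal{P}(z)$ act identically on $V/\mathfrak{u}V$---together with the standard compatibility between the Harish-Chandra homomorphisms for $\mathfrak{g}$ and $\mathfrak{l}$.

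For the $Z(\mathfrak{l})$-finiteness, I would decompose $V = V_{1} \oplus \cdots \oplus V_{k}$ into its generalized infinitesimal character components under $Z(\mathfrak{g})$. Each $V_{i}$ is annihilated by some ideal $I_{i} \subset Z(\mathfrak{g})$ of finite codimension, and by the preceding lemma the image of $V_{i}$ in $V/\mathfrak{u}V$ is annihilated by $\mathcal{P}(I_{i}) \subset Z(\mathfrak{l})$. The key algebraic input is that $Z(\mathfrak{l})$ is a finitely generated module over the subring $\mathcal{P}(Z(\mathfrak{g}))$; via the Harish-Chandra isomorphisms this transports to the classical Chevalley fact that $S(\mathfrak{h})^{W_{\mathfrak{l}}}$ is module-finite over $S(\mathfrak{h})^{W}$. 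Hence $\mathcal{P}(I_{i})\cdot Z(\mathfrak{l})$ has finite codimension in $Z(\mathfrak{l})$, and the finiteness follows.

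For the relation between infinitesimal characters, the crucial observation is that the iterated untwisted PBW projection satisfies
\[
\bar\gamma_{\mathfrak{h}}^{\mathfrak{g}} \;=\; \bar\gamma_{\mathfrak{h}}^{\mathfrak{l}} \circ \mathcal{P}
\]
as maps into $S(\mathfrak{h})$. After applying the respective $\rho$-twists $\tau_{\rho_{\mathfrak{g}}}$ and $\tau_{\rho_{\mathfrak{l}}}$ that convert them into the Harish-Chandra isomorphisms $\gamma_{\mathfrak{g}}$, $\gamma_{\mathfrak{l}}$, this becomes $\gamma_{\mathfrak{g}}(z) = \tau_{\rho(\mathfrak{u})}\bigl(\gamma_{\mathfrak{l}}(\mathcal{P}(z))\bigr)$ with $\rho(\mathfrak{u}) = \rho_{\mathfrak{g}} - \rho_{\mathfrak{l}}$. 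Now, if $\chi_\mu^{\mathfrak{l}}$ occurs in $V/\mathfrak{u}V$, lift it to a generalized $Z(\mathfrak{g})$-eigenspace $V_\lambda$ of $V$ (which exists by the finiteness applied to $V$ itself) whose image meets the $\chi_\mu^{\mathfrak{l}}$-component. On this image $\mathcal{P}(z)$ acts simultaneously by $\chi_\lambda^{\mathfrak{g}}(z)$ and $\chi_\mu^{\mathfrak{l}}(\mathcal{P}(z))$; matching these for every $z$ and invoking the displayed identity forces $\mu = \lambda + \rho(\mathfrak{u})$, up to the $W$-ambiguity in $\lambda$ that the ``there exists $\lambda$'' formulation permits.

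The main technical obstacle is the $\rho$-bookkeeping: one must verify that the two stepwise PBW projections $U(\mathfrak{g}) \to U(\mathfrak{l}) \to U(\mathfrak{h})$ do compose to the direct unshifted projection on $Z(\mathfrak{g})$, and then carefully track how the shifts $\tau_{\rho_{\mathfrak{g}}}$ and $\tau_{\rho_{\mathfrak{l}}}$ recombine to yield the net shift by $\rho(\mathfrak{u})$. Once these are handled consistently---and paired with the Chevalley-type module-finiteness of $Z(\mathfrak{l})$ over $\mathcal{P}(Z(\mathfrak{g}))$ used in the first step---the remainder of the argument is formal.
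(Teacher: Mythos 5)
Your argument is correct and is essentially the paper's own route: the paper simply defers to Vogan's Corollary 3.1.6, whose proof is exactly what you reconstruct — the two preceding lemmas on $\mathcal{P}$, the module-finiteness of $Z(\mathfrak{l})$ over $\mathcal{P}(Z(\mathfrak{g}))$ via Chevalley, and the compatibility $\gamma_{\mathfrak{g}}=\tau_{\rho(\mathfrak{u})}\circ\gamma_{\mathfrak{l}}\circ\mathcal{P}$ giving the shift $\mu=\lambda+\rho(\mathfrak{u})$ up to the $W$-ambiguity.
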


\begin{proof}
The proof is similar to that of Corollary 3.1.6 in \cite{Vog-book}.
\end{proof}

\subsection{Infinitesimal characters for $G_n$}

If $\pi$ is an irreducible admissible representation of $G_n$, the infinitesimal character $\xi_{\pi}$ of $\pi$ can
be regarded as a multiset (set with multiplicity) of $n$ complex numbers
$\left\{  z_{1},\ldots,z_{n}\right\}  $. If all the $z_{i}$ are real we say
that $\pi$ has real infinitesimal character. For convenience we write $\sqcup$
for the (disjoint) union of multisets; e.g. $\left\{  1,2\right\}
\sqcup\left\{  2,3\right\}  =\left\{  1,2,2,3\right\}  .$

For $m\in\mathbb{N}$ and $z\in\mathbb{C}$ we define the corresponding
``segment" to be the set
\[
S\left(  m,z\right)  =\left\{  z_{1},\cdots,z_{m}\right\}  \text{ where }%
z_{i}=z+ \left(  m+1\right)  /2-k.%
\]
Thus $z_{1},\cdots,z_{m}$ is an arithmetic progression of length $m$, mean
$z$, and common difference $1$. 

The following lemma summarizes the main facts about infinitesimal characters
for unitary representations of $G_n $.

\begin{lemma}
\label{lem:infchar}

\begin{enumerate}
\item $\xi_{\pi_{1}\times\cdots\times\pi_{k}}=\xi_{\pi_{1}}\sqcup\cdots
\sqcup\xi_{\pi_{k}}$

\item For $\pi=\chi\left(  m,\varepsilon,z\right)  $ we have $\xi_{\pi
}=S\left(  m,z\right)  $

\item For $\pi=\sigma\left(  2m,s;\varepsilon,it\right)  $ we have $\xi_{\pi
}=S\left(  m,s+it\right)  \sqcup S\left(  m,-s+it\right)  $

\item For $\pi=\delta\left(  2m,k;it\right)  $ we have $\xi_{\pi}=S\left(
m,\left(  k/2\right)  +it\right)  \sqcup S\left(  m,\left(  -k/2\right)
+it\right)  $

\item For $\pi=\psi\left(  4m,k,s;it\right)  $ we have $\xi_{\pi}=\bigsqcup
S\left(  m,\pm\left(  k/2\right)  \pm s+it\right)  $
\end{enumerate}
\end{lemma}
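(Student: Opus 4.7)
The plan is to establish the five assertions in the stated order, with (1) serving as the key ingredient and (2) as the only direct calculation; parts (3)--(5) will then reduce to (1) and (2). The mechanism underlying (1) is the compatibility of the Harish-Chandra isomorphism with parabolic induction, exactly in the spirit of the Casselman--Osborne discussion of the previous subsection: for a standard parabolic $P_\alpha = L_\alpha N_\alpha$, the triangular decomposition $U(\g_n) = U(\fl_\alpha) \oplus (\fn_\alpha U(\g_n) + U(\g_n)\overline{\fn_\alpha})$ yields a $\rho$-shifted projection $Z(\g_n) \to Z(\fl_\alpha)$ that expresses the central action on $\Ind_{P_\alpha}^{G_n}(\sigma)$ in terms of that on $\sigma$.

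For (1), I would then note that $\pi_1\times\cdots\times\pi_k$ is parabolically induced from $\pi_1 \otimes \cdots \otimes \pi_k$, so $Z(\g_n)$ acts through $Z(\g_{a_1}) \otimes \cdots \otimes Z(\g_{a_k})$; under Harish-Chandra's identifications $Z(\g_n) \cong S(\h)^W$ and $Z(\g_{a_i}) \cong S(\h_i)^{W_i}$, the composite is dual to concatenation of multisets, which on the level of infinitesimal characters is precisely $\xi_{\pi_1}\sqcup\cdots\sqcup\xi_{\pi_k}$. For (2), I would compute directly: the character $\chi(m,\varepsilon,z)$ has differential the linear form $z\cdot(x_1+\cdots+x_m)$ on the diagonal Cartan of $\g_m$, so its Harish-Chandra parameter is $z(1,\ldots,1) + \rho_{G_m}$ with $\rho_{G_m} = \tfrac12(m-1,m-3,\ldots,-(m-1))$; reading off the coordinates gives exactly the multiset $S(m,z)$.

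Parts (3)--(5) will then follow by combining (1) and (2) with the standard fact that an irreducible subquotient of a module with a single infinitesimal character inherits that character. Specifically: $\sigma(2m,s;\varepsilon,it)$ equals $\chi(m,0,s+it)\times\chi(m,\varepsilon,-s+it)$ up to sign (using that twisting by $\chi(2m,\varepsilon,it)$ shifts every HC coordinate by $it$ and distributes over $\times$); $\delta(2m,k;it)$ is by its very definition \eqref{=BSS} an irreducible subquotient of $\chi(m,0,k/2+it)\times\chi(m,\varepsilon_{k+1},-k/2+it)$, whose infinitesimal character is already computed by (1) and (2); and $\psi(4m,k,s;it)$ equals $\delta(2m,k;s+it)\times\delta(2m,k;-s+it)$ by definition, so (1) together with (4) closes the argument.

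I do not anticipate a substantive obstacle; the entire exercise is bookkeeping with the $\rho$-shift. The only mildly delicate point is ensuring that Speh has a \emph{scalar} (not merely generalized) infinitesimal character, but this is automatic because the containing principal series is parabolically induced from one-dimensional representations of a Levi, hence already has scalar infinitesimal character, and any subquotient inherits it.
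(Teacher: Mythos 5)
Your proposal is correct and takes essentially the same route as the paper: parts (3)--(5) are reduced to (1) and (2) exactly as in the paper's proof (via the definitions of the Stein and Speh complementary series as BZ-products and via (\ref{=BSS}) for the Speh representation), the only difference being that you supply the standard arguments for (1) and (2) where the paper simply cites \cite{Vog-book}. Your $\rho$-shift computation in (2) and your remark that the subquotient inherits the scalar infinitesimal character of the degenerate principal series are both accurate.
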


\begin{proof}
1) and 2) are standard \cite{Vog-book} and together they imply 3). Similarly
1) and 4) imply 5). Part 4) follows from 1) and formula (\ref{=BSS});
alternatively one may deduce it from 1), formula (\ref{=Speh}), and the fact
that the discrete series $\delta\left(  2,k\right)  $ of $GL\left(
2,\mathbb{R}\right)  $ has infinitesimal character $\left\{  \frac{k}%
{2},-\frac{k}{2}\right\}  $.
\end{proof}

\begin{lemma}
If $\pi\in \widehat{G_n}$ then $\xi_{\pi}$ is
\emph{symmetric} in the sense $z$ and $-\bar{z}$ have the same multiplicity in
$\xi_{\pi}$ for all $z\in\mathbb{C}$.
\end{lemma}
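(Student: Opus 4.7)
The plan is to combine Lemma \ref{lem:infchar} with the Vogan classification to reduce the statement to a case-by-case verification on the basic unitary building blocks. Let $\iota$ denote the involution $z \mapsto -\bar z$ of $\C$. Since $\iota$-symmetry of multisets is preserved under disjoint union, and part (1) of Lemma \ref{lem:infchar} says that $\xi_{\pi_1 \times \cdots \times \pi_k} = \xi_{\pi_1} \sqcup \cdots \sqcup \xi_{\pi_k}$, it suffices to verify $\iota$-invariance of $\xi_\pi$ when $\pi$ is one of the four basic unitary types listed in \S\S\ref{subsec:VogClas}: a unitary character, a twisted Stein complementary series, a twisted Speh representation, or a twisted Speh complementary series.

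The key elementary observation is that $\iota$ carries each arithmetic segment $S(m, z_0)$ to $S(m, -\bar{z_0})$. Indeed if $z = z_0 + (m+1)/2 - k$ with $1 \le k \le m$, then $-\bar z = -\bar{z_0} + (m+1)/2 - (m+1-k)$, and $k \mapsto m+1-k$ permutes $\{1, \ldots, m\}$. Hence applying $\iota$ to $\xi_\pi$ amounts to applying $z_0 \mapsto -\bar{z_0}$ to the multiset of segment centers computed in parts (2)--(5) of Lemma \ref{lem:infchar}. For a unitary character the unique center $it$ is fixed by $-\overline{\cdot}$; for a twisted Stein complementary series the pair $\{s+it,\, -s+it\}$ with real $s$ is swapped by $\iota$; the twisted Speh case is identical with $k/2$ in place of $s$; and for a twisted Speh complementary series the quadruple $\{\pm k/2 \pm s + it\}$ (all parameters real) is permuted. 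In each case the multiset of centers is preserved, hence $\xi_\pi = \iota(\xi_\pi)$.

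No step presents a genuine obstacle; the argument is essentially bookkeeping once Lemma \ref{lem:infchar} is in hand. As a more conceptual alternative (which works for any real reductive group), one can note that any unitary $\pi$ is isomorphic to the complex conjugate of its contragredient $\overline{\pi^{\vee}}$ via the $G$-invariant Hermitian form on the Hilbert space. Combined with the standard facts that the contragredient and complex conjugation twist the infinitesimal character by $\lambda \mapsto -\lambda$ and $\lambda \mapsto \bar\lambda$ respectively (on the Harish-Chandra parameter), this gives $\chi_\pi = \chi_{-\bar\lambda_\pi}$, which is precisely the $\iota$-symmetry of $\xi_\pi$.
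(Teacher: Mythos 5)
Your proof is correct and follows exactly the two routes the paper itself indicates: the main argument (reduction via Lemma \ref{lem:infchar}(1) and the Vogan classification to a check on the basic representations, using $\iota\bigl(S(m,z_0)\bigr)=S(m,-\bar z_0)$) is the paper's ``check it for basic representations'' route, carried out in full detail, and your alternative via the invariant Hermitian form is precisely the ``easy elementary fact about Hermitian representations'' the paper alludes to. Nothing further is needed.
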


This lemma is in fact an easy elementary fact about Hermitian
representations. For $\pi\in \widehat{G_n}$ it also follows easily from the previous lemma by checking it for basic representations.




\subsection{Adduced representations}

\begin{conjecture}
[\cite{SaSt}]\label{conj:SpehCS} Let $\pi\in\widehat{G_{n}}$ and write
$\pi=\pi_{1}\times\cdots\times\pi_{k}$ as in the Vogan classification with
each $\pi_{i}$ a basic unitary representation of type \ref{it:Char}%
-\ref{it:SpehCS} listed above. Then we have
\begin{equation}
A\pi=A\pi_{1}\times\cdots\times A\pi_{k} \label{=dV}%
\end{equation}
where
\begin{align}
A\left(  \chi\left(  m,\varepsilon,it\right)  \right)   &  =\chi\left(
m-1,\varepsilon,it\right) \label{=dB}\\
A\left(  \sigma\left(  2m,s;\varepsilon,it\right)  \right)   &  =\sigma\left(
2\left(  m-1\right)  ,s;\varepsilon,it\right) \nonumber\\
A\left(  \delta\left(  2m,k;it\right)  \right)   &  =\delta\left(  2\left(
m-1\right)  ,k;it\right) \nonumber\\
A\left(  \psi\left(  4m,k,s;it\right)  \right)   &  =\psi\left(  4\left(
m-1\right)  ,k,s;it\right) \nonumber
\end{align}

\end{conjecture}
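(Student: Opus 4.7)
The plan is to identify $A\pi$ uniquely from two invariants---the associated partition $AP(A\pi)$ and the infinitesimal character $\xi_{A\pi}$---and then match this data against the Vogan classification.

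First I would determine $AP(A\pi)$. By Corollary \ref{cor:BarBoz} and Theorem \ref{thm:Main}(2), if $\pi = \pi_1 \times \cdots \times \pi_k$ then $AP(\pi) = AP(\pi_1) + \cdots + AP(\pi_k)$, and the depth $d$ of $\pi$ equals the largest part of this partition. Since $DC(\pi) = AP(\pi)$, the partition $AP(A\pi)$ is obtained from $AP(\pi)$ by deleting that largest part. A direct combinatorial check with Definition \ref{l+m} then shows that $AP(A\pi)$ agrees with $AP(A\pi_1) + \cdots + AP(A\pi_k)$ provided the partition-level formulas $AP(A\chi(m,\varepsilon,it)) = 1^{m-1}$ and $AP(A\delta(2m,k;it)) = 2^{m-1}$ hold, which in turn follow from $AP(\chi(m,\varepsilon,it)) = 1^m$ and $AP(\delta(2m,k;it)) = 2^m$ as recorded in the derivation of Theorem \ref{thm:AVTadic}.

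Second, I would pin down $\xi_{A\pi}$ via the Casselman--Osborne lemma. By Lemma \ref{lem:A1Step}, $(A\pi)^{HC}$ appears (up to a controlled twist) as a subquotient of the Jacquet module of $\pi^{HC}$ with respect to $\fp_{(n-d,d)} = \fl + \mathfrak{u}$, $\fl = \gl_{n-d} \oplus \gl_d$, further twisted by a non-degenerate Whittaker character on the $\gl_d$ factor. Applying Corollary \ref{CO}, and using that the Whittaker character forces the $\gl_d$-infinitesimal character to be that of the principal series, one concludes that $\xi_{A\pi}$ is exactly $\xi_\pi$ with a single length-$d$ segment removed (after the appropriate $\rho$-shift). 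Since Lemma \ref{lem:infchar} presents $\xi_\pi$ as a disjoint union of the segments contributed by the factors $\pi_i$, this pins $\xi_{A\pi}$ down and forces it to equal $\xi_{A\pi_1} \sqcup \cdots \sqcup \xi_{A\pi_k}$ as predicted by (\ref{=dB}).

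Third, given $(AP(A\pi), \xi_{A\pi})$, I would identify $A\pi$ inside the Vogan classification and read off the case-by-case formulas (\ref{=dB}): characters are immediate from the Mackey decomposition (\ref{PGP}), the Speh case is established in \cite{SaSt, BSS}, and the Stein and Speh complementary series then follow from the multiplicative identity since $\sigma(2m,s;\varepsilon,it)$ and $\psi(4m,k,s;it)$ are BZ products of two characters and two Spehs respectively. The hard part will be that $(AP, \xi)$ is in general not a complete invariant inside the Vogan classification: one expects exactly one Speh complementary series configuration in which coincidences among the segments of Lemma \ref{lem:infchar}(5) leave two candidate representations carrying the same $AP$ and $\xi$. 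This is precisely the single exceptional case acknowledged in the paper, and deciding between the two candidates would require either a finer invariant or a direct analysis going beyond the $(AP, \xi)$ strategy outlined here.
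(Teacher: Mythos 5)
There is a genuine gap, in fact two. First, your Step 2 overclaims what Casselman--Osborne can deliver. The paper's Proposition \ref{prop:CassOS}, which is exactly the Casselman--Osborne argument you describe (via Lemma \ref{lem:A1Step}, Proposition \ref{prop:pi2Api} and Corollary \ref{CO}), only yields that $\xi_{A\pi}$ is obtained from $\xi_{\pi}$ by deleting \emph{some} $d$ elements and shifting the rest by $1/2$; it gives no control over \emph{which} $d$ elements are deleted, and in particular does not show that they form a single length-$d$ segment. The presence of the nondegenerate Whittaker character on the $\gl_d$-factor does not force this either (generic representations exist for every infinitesimal character), so the assertion that Casselman--Osborne ``forces'' $\xi_{A\pi}=\xi_{A\pi_1}\sqcup\cdots\sqcup\xi_{A\pi_k}$ is unsupported --- it is essentially the content of the conjecture. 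The paper instead combines the weak deletion constraint with the symmetry of infinitesimal characters of unitary representations and the very particular shape of the Speh complementary-series multiset (Lemma \ref{lem:CharOrbUn}), and even then the combinatorics only closes for $k\neq m$ (Theorem \ref{thm:Apsi}); for $k=m$ two symmetric sub-multisets survive and the case is left open.

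Second, your Step 3 claim that the Stein and Speh complementary-series formulas ``follow from the multiplicative identity'' is incorrect. The identity (\ref{=dV}) (from \cite{Sahi-Kirillov}) concerns the decomposition of $\pi$ into \emph{basic unitary} factors of the Vogan classification; but $\sigma(2m,s)=\chi(m,0,s)\times\chi(m,0,-s)$ and $\psi(4m,k,s)=\delta(2m,k;0,s)\times\delta(2m,k;0,-s)$ are products of \emph{non-unitary} twists, to which that multiplicativity does not apply --- this is precisely why \cite{Sahi-PAMS} had to treat the Stein case separately and why the Speh complementary-series case was only a conjecture in \cite{SaSt}. If your reduction were valid there would be nothing left to prove, contradicting both your own acknowledged exceptional case and the fact that the paper's new contribution here is exactly Theorem \ref{thm:Apsi} for $k\neq m$, with $k=m$ still open. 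A further caveat for your global strategy: the pair $(AP,\xi)$ is not a complete invariant of $\widehat{G_n}$ in general (e.g.\ it cannot distinguish $\chi(m,0,it)$ from $\chi(m,1,it)$), so identification from these two invariants only works in situations like Lemma \ref{lem:CharOrbUn}, where the specific form of $\xi$ excludes all other families. Note finally that the statement is stated in the paper as a conjecture: apart from the cited cases, the paper proves only part 4 for $k\neq m$, so a complete proof along your lines should not be expected without genuinely new input.
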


Most of this result is already known. The identity (\ref{=dV}) is proved in
\cite{Sahi-Kirillov}. As for (\ref{=dB}), part 1 is obvious, part 2 is proved
in \cite{Sahi-PAMS},
part 3 is proved in \cite{SaSt}, where part 4 was conjectured. We show that
the techniques of the present paper suffice to prove (\ref{=dB}) part 4 for
$k\neq m$.

\begin{lemma}
Let $\pi\in\widehat{G_{n}}$ and for $t\in\mathbb{R}$ let $\pi[
it]  $ denote the unitary twist of $\pi$ by the character $\left\vert
\det\right\vert ^{it}$. Then we have $A\left(  \pi[it]  \right)
=\left(  A\pi\right)[it]  $.
\end{lemma}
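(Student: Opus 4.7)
The plan is to exploit Lemma \ref{lem:A1Step} together with uniqueness in the Mackey decomposition of $\widehat{P_n}$. Let $d = \mathrm{depth}(\pi)$, so that
\[
\pi|_{P_n} = \mathrm{Ind}_{S_{n-d,d}}^{P_n}\bigl(A\pi \otimes \psi \otimes 1\bigr).
\]
The strategy is to apply $-\otimes|\det|^{it}$ to this identity and push the twist inside the induction to identify the Mackey data of $\pi[it]|_{P_n}$.

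First I would restrict the character $|\det|^{it}$ to $S_{n-d,d} = (G_{n-d}\times N_{1^d})\ltimes N_{(n-d,d)}$. Since every element of $N_{1^d}$ and of $N_{(n-d,d)}$ is unipotent, the determinant of a typical element of $S_{n-d,d}$ depends only on its $G_{n-d}$-component; hence $|\det|^{it}\big|_{S_{n-d,d}}$ is the pullback of $|\det|^{it}$ on $G_{n-d}$ via projection onto the first factor of the Levi.

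Next, using the standard identity $\mathrm{Ind}_H^G(\sigma)\otimes\chi \cong \mathrm{Ind}_H^G(\sigma\otimes\chi|_H)$ for a character $\chi$ of $G$, I would compute
\[
\pi[it]|_{P_n} \;=\; \pi|_{P_n}\otimes |\det|^{it}\big|_{P_n} \;=\; \mathrm{Ind}_{S_{n-d,d}}^{P_n}\bigl((A\pi)[it] \otimes \psi \otimes 1\bigr),
\]
where the absorption of the twist into the $G_{n-d}$-slot uses the previous step. Since $(A\pi)[it]\in\widehat{G_{n-d}}$, the right-hand side is exactly $I_{\chi_1}^{d-1}I_{\chi_0}\bigl((A\pi)[it]\bigr)$ in the iterated Mackey decomposition of $\widehat{P_n}$. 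By the uniqueness of the natural number and the $\widehat{G_{n-d}}$-representation in \eqref{der}, it follows that $\mathrm{depth}(\pi[it])=d$ and $A(\pi[it]) = (A\pi)[it]$.

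There is no real obstacle; the argument is essentially functoriality of induction with respect to twists by characters of the ambient group, combined with the observation that $|\det|^{it}$ on $S_{n-d,d}$ is absorbed entirely by the $G_{n-d}$-factor of the Levi. The only care needed is the bookkeeping in the restriction of the determinant character to that subgroup.
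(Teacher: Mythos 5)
Your proposal is correct. The paper's own proof is just the assertion ``This is straightforward,'' and your argument --- twisting the identity of Lemma \ref{lem:A1Step} by $|\det|^{it}$, observing that the restriction of this character to $S_{n-d,d}$ is carried entirely by the $G_{n-d}$-factor (the other factors being unipotent), and invoking the uniqueness of the data in \eqref{der} --- is a complete and accurate way of filling in the claimed straightforward verification.
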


\begin{proof}
This is straightforward.
\end{proof}

\begin{lemma}
\label{lem:CharOrbUn} The Speh complementary series representation
$\psi\left(  4m,k,s;it\right)  $ is uniquely determined by its infinitesimal
character and associated partition.
\end{lemma}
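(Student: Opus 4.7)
I would prove this lemma by an exhaustive case analysis on Vogan decompositions of $\pi' \in \widehat{G_{4m}}$. By Theorem \ref{thm:AVTadic}, the condition $AP(\pi') = 4^m$ (equivalently $AP(\pi')^{t} = m^4$) forces the Vogan expansion $\pi' = \prod_{i=1}^{k} \chi_i \times \prod_{j=1}^{l} \delta_j$ to consist of characters of $G_m$ and Speh representations of $G_{2m}$ with total block count $k + 2l = 4$, assembled into basic unitary building blocks of type (a)--(d). By Lemma \ref{lem:infchar}, the target infinitesimal character $\xi_{\psi(4m,k,s;it)}$ is the multiset union of four length-$m$ segments $S(m, \pm k/2 \pm s + it)$. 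Since all these segments have the same length $m$, the decomposition of the multiset into length-$m$ segments is unique (extract the segment containing the element of largest real part greedily), so the multiset of four segment centers $\{\pm k/2 \pm s + it\}$ is recovered from $\xi_{\pi'}$.

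The key observation is a fractional-part invariant: each basic block contributes segment centers whose real parts modulo $\tfrac{1}{2}\mathbb{Z}$ lie in a restricted set---$\{0\}$ for types (a) and (c), and $\{\pm s'\}$ for types (b) and (d), where $s' \in (0, 1/2)$ is the complementary-series parameter. Since the target center real parts have nonzero shifts $\pm s \pmod{\tfrac{1}{2}\mathbb{Z}}$, $\pi'$ must contain at least one (b) or (d) block, with $s' \in \{s,\, 1/2 - s\}$. If $\pi'$ contains a (d) block $\psi(4m, k', s'; it')$, it uses both available Speh slots (so $l = 2$, $k = 0$), whence $\pi' = \psi(4m, k', s'; it')$ outright; matching segment centers yields $t' = t$, and $\{\pm k'/2 \pm s'\} = \{\pm k/2 \pm s\}$ then forces $(k', s') = (k, s)$ by comparing the two largest elements.

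Otherwise, $\pi'$ has no (d) block but contains at least one (b) block $\sigma(2m, s'; \cdot, it')$, contributing centers $\pm s' + it'$. Requiring $\pm s' \in \{\pm k/2 \pm s\}$, combined with $s' \in (0, 1/2)$ and $k \in \mathbb{N}_{\geq 1}$, forces $k = 1$ and $s' = 1/2 - s$. This (b) block accounts for the target centers $\pm(1/2 - s)$, but the remaining centers $\pm(1/2 + s)$ cannot arise: types (a) and (c) only yield $\tfrac{1}{2}\mathbb{Z}$ real parts, and any additional (b) or (d) block would require $s'' = 1/2 + s > 1/2$, violating $s'' \in (0, 1/2)$. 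This contradiction leaves $\pi' = \psi(4m, k, s; it)$ as the unique possibility. The main obstacle is this final case analysis around type (b) blocks, particularly the borderline case $k = 1$; once the fractional-part invariant is established, the remaining work is a finite inspection enabled by the segment-decomposition uniqueness.
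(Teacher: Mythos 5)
Your proof is correct and follows essentially the same strategy as the paper's: use Theorem \ref{thm:AVTadic} to force all Vogan blocks to have size $m$ (resp.\ $2m$, $4m$), use the arithmetic of the infinitesimal character to exclude every combination except a single Speh complementary series, and then match parameters via the largest entries and the imaginary part. The only tactical difference is that the paper excludes characters and Speh blocks directly from the non-integrality of $2\operatorname{Re}(z)$ for $z\in\xi_{\pi}$ and excludes a product of two Stein representations from the bound $\max 2\operatorname{Re}(z)=m-1+k+2s\geq m$, whereas you first recover the multiset of segment centers (via uniqueness of the length-$m$ segment decomposition) and run the exclusion on centers; both routes are valid.
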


\begin{proof}
By Lemma \ref{lem:infchar} $\pi=\psi\left(  4m,k,s;it\right)  $ has
infinitesimal character $\xi_{\pi}=\bigsqcup S\left(  m,\pm\left(  k/2\right)
\pm s+it\right)  $, and by Theorem
\ref{thm:AVTadic} its associated partition is $4^{m}$. Since $0<s<1/2$ it follows that

\begin{enumerate}
\item $2\operatorname{Re}\left(  z\right)  $ is not an integer for any
$z\in\xi_{\pi}$,

\item $\max\left\{  2\operatorname{Re}\left(  z\right)  :z\in\xi_{\pi
}\right\}  =m-1+k+2s$
\end{enumerate}

Let $\pi^{\prime}$ be a unitary representation with the same infinitesimal
character and associated partition as $\pi$. Write $\pi^{\prime}=\pi_{1}\times\cdots\times
\pi_{l}$ as in the Vogan classification. Then condition 1 above implies that
none of the $\pi_{i}$ can be unitary characters or Speh representations, while
condition 2 implies that $\pi^{\prime}$ cannot be the product of two Stein
representations of $G_{2m}$, for then we would have $\max\left(
2\operatorname{Re}\left(  z\right)  \right)  <m$. Therefore we conclude that
$\pi^{\prime}$ is a Speh complementary series representation of $G_{4m}$. Thus
$\pi^{\prime}=\psi\left(  4m,k^{\prime},s^{\prime};it^{\prime}\right)  $. By
looking at the integral and fractional parts of $\max\left(
2\operatorname{Re}\left(  z\right)  \right)  $ we deduce $k=k^{\prime}$ and
$s=s^{\prime}$. By looking at the imaginary part of $z\in\xi_{\pi}=\xi
_{\pi^{\prime}}$ we conclude that $t=t^{\prime}$.
\end{proof}

\begin{remark}
Due to the previous lemma, Conjecture \ref{conj:SpehCS} becomes
now equivalent to the statement that for any
$\pi\in\widehat{G_{n}}$, the infinitesimal character parameter of
$A\pi$ is obtained from the infinitesimal character parameter of
$\pi$ by the following procedure: consider the Young diagram $X$
with the sizes of rows described by the partition corresponding to
$\pi$, write the infinitesimal character parameter of $\pi$ in the
columns such that in each column we will have a segment, and make
each of those segments shorter by one without changing its center.
This is similar to the effect of highest derivatives on the
Zelevinsky classification in the $p$-adic case (see \S
\ref{sec:padic_short}). We cannot prove this statement here, but
we deduce its weaker version from Corollary \ref{CO} and
Proposition \ref{prop:pi2Api}.
\end{remark}

\begin{proposition}
\label{prop:CassOS} Let $\pi\in\widehat{G_{n}}$ and let $d=depth(\pi)$. Let
$S=\left\{  z_{1},\ldots,z_{n}\right\}  $ and $S^{\prime}=\left\{
y_{1},\ldots,y_{n-d}\right\}  $ be the multisets corresponding to
infinitesimal characters of $\pi$ and $A\pi$ respectively. Then $S^{\prime}$is
obtained from $S$ by deleting $d$ of the\ $z_{i}$'s and adding $1/2$ to
each of the remaining $z_{i}$'s.
\end{proposition}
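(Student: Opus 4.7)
The plan is to combine Proposition \ref{prop:pi2Api} with the Casselman--Osborne mechanism of Corollary \ref{CO}, applied to the parabolic $\mathfrak{p}_{(n-d,d)}$ of $\mathfrak{g}_n$, as suggested in the remark preceding the proposition.

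First I would invoke Proposition \ref{prop:pi2Api} to obtain a continuous $S_{n-d,d}$-equivariant map with dense image
\[
f \colon \pi^{\infty} \longrightarrow T := (A\pi)^{\infty} \otimes |\det|^{(d-1)/2}.
\]
The normal subgroup $N_{(n-d,d)}$ acts trivially on $T$, since it enters the defining representation $A\pi\otimes\psi\otimes 1$ of Lemma \ref{lem:A1Step} by the trivial character. Differentiating, the complexified nilradical $\mathfrak{u} := (\mathfrak{n}_{(n-d,d)})_{\mathbb{C}}$ annihilates $T$, so $f$ descends to a $\mathfrak{g}_{n-d}$-equivariant linear map with dense image $\overline{f}\colon \pi^{\infty}/\mathfrak{u}\pi^{\infty} \to T$.

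Next I would view $\pi^\infty$ as a $Z(\mathfrak{g}_n)$-finite $\mathfrak{g}_n$-module with infinitesimal character $\chi_{\xi_\pi}$ and apply Corollary \ref{CO} with $\mathfrak{p} = \mathfrak{p}_{(n-d,d)} = \mathfrak{l}+\mathfrak{u}$, Levi $\mathfrak{l} = \mathfrak{g}_{n-d}\oplus\mathfrak{g}_d$. A short root-counting computation gives
\[
\rho(\mathfrak{u}) = \bigl(\tfrac{d}{2},\ldots,\tfrac{d}{2},\; -\tfrac{n-d}{2},\ldots,-\tfrac{n-d}{2}\bigr)
\]
with $n-d$ copies of $d/2$ followed by $d$ copies of $-(n-d)/2$. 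Corollary \ref{CO} then asserts that every generalized $\mathfrak{l}$-infinitesimal character $\chi_\mu$ occurring in $\pi^{\infty}/\mathfrak{u}\pi^{\infty}$ has $\mu = \lambda + \rho(\mathfrak{u})$ for some $\lambda$ in the $S_n$-orbit of $\xi_\pi$. Projecting to the $\mathfrak{g}_{n-d}$-factor, every generalized $\mathfrak{g}_{n-d}$-infinitesimal character that appears must be of the form $\{z_{i_1}+d/2,\ldots,z_{i_{n-d}}+d/2\}$ for some $(n-d)$-element subset $\{i_1,\ldots,i_{n-d}\}\subset\{1,\ldots,n\}$.

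On the other hand, since $A\pi$ is irreducible with infinitesimal character $\{y_1,\ldots,y_{n-d}\}$, the $\mathfrak{g}_{n-d}$-infinitesimal character of $T$ is $\chi_T = \{y_k + (d-1)/2\}$. Using $Z(\mathfrak{g}_n)$-finiteness of $\pi^\infty$ together with $Z(\mathfrak{g}_{n-d})\subset Z(\mathfrak{l})$, one decomposes $\pi^{\infty}/\mathfrak{u}\pi^{\infty}$ algebraically into finitely many generalized $Z(\mathfrak{g}_{n-d})$-eigenspaces $M_\nu$. For $v\in M_\nu$ and $z' \in Z(\mathfrak{g}_{n-d})$, applying $(z'-\nu(z'))^N$ and using that $z'$ acts on $T$ by the scalar $\chi_T(z')$ shows that $\overline{f}$ vanishes on $M_\nu$ whenever $\nu\neq\chi_T$. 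Density of the image of $\overline{f}$ then forces $M_{\chi_T}\neq 0$, and combining with the constraint from the previous paragraph yields $\{y_k+(d-1)/2\}=\{z_{i_k}+d/2\}$, i.e.\ $y_k = z_{i_k} + 1/2$, which is the desired statement. The main technical delicacy is to ensure that the $Z(\mathfrak{g}_{n-d})$-eigenspace decomposition and the vanishing argument for $\overline{f}$ are purely algebraic, since $\pi^{\infty}/\mathfrak{u}\pi^{\infty}$ is a potentially non-Hausdorff topological quotient; this is manageable because the entire comparison takes place inside the category of $U(\mathfrak{g}_n)$-modules, and only the density of the image of $\overline{f}$ (not any topology on $\pi^{\infty}/\mathfrak{u}\pi^{\infty}$) is used.
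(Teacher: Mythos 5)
Your proposal is correct and takes essentially the same route as the paper: Proposition \ref{prop:pi2Api} combined with the Casselman--Osborne Corollary \ref{CO} applied to $\mathfrak{p}_{(n-d,d)}$, with the same $\rho(\mathfrak{u})$-shift computation and the same comparison of $G_{n-d}$-infinitesimal characters of $\pi^{\infty}/\mathfrak{u}\pi^{\infty}$ and of $(A\pi)^{\infty}\otimes|\det|^{(d-1)/2}$. The only difference is that you spell out the generalized $Z(\mathfrak{g}_{n-d})$-eigenspace decomposition and the density/vanishing argument that the paper leaves implicit in the word ``comparing''.
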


\begin{proof}
Let $\sigma:= A\pi$ and let $\pi^{\infty}$ and $\sigma^{\infty}$ be the spaces
of smooth vectors of $\pi$ and $\sigma$, respectively, and let $\lambda$ and
$\mu$ be their infinitesimal character parameters.

By Proposition \ref{prop:pi2Api} there is an $S_{n-d,d}$-equivariant morphism
$\varphi:\pi^{\infty}\rightarrow\sigma^{\infty}\otimes|\det|^{(d-1)/2}$ with
dense image. Since $N_{n-d,d}$ acts trivially on $\sigma^{\infty}\otimes
|\det|^{(d-1)/2}$, $\varphi$ factors through the quotient $\pi^{\infty
}/\mathfrak{u}\pi^{\infty},$ where $\mathfrak{u}=Lie\left(  N_{n-d,d}\right)
$. By Corollary \ref{CO}, the possible $G_{n-d}\times G_{d}$ infinitesimal
characters in $\pi^{\infty}/\mathfrak{u}\pi^{\infty}$ are of the form
$w\lambda+\rho\left(  \mathfrak{u}\right)  $. Further restricting to $G_{n-d}$
we conclude that these characters are of the form%
\[
\lambda^{-}+\frac{d}{2}\left(  1,1,\cdots,1\right)
\]
where $\lambda^{-}\subset\lambda$ is a subset of size $n-d$.

On the other hand, the $G_{n-d}$ module $\sigma^{\infty}\otimes|\det
|^{(d-1)/2}$ has infinitesimal character
\[
\mu+\frac{d-1}{2}\left(  1,1,\cdots,1\right)  .
\]

Comparing the two displayed expressions we get $\mu=\lambda^{-}+\frac{1}%
{2}\left(  1,1,\cdots,1\right)  $ and the result follows.
\end{proof}

We are now ready to prove the main theorem of this section.

\begin{theorem}
\label{thm:Apsi} Suppose $k\neq m$ then $A\left(  \psi\left(
4m,k,s;it\right)  \right)  =\psi\left(  4\left(  m-1\right)  ,k,s;it\right)  $.
\end{theorem}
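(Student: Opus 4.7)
The strategy is to apply Lemma~\ref{lem:CharOrbUn} to $\sigma := \psi(4(m-1), k, s; it)$: since Speh complementary series are uniquely determined by their infinitesimal character and associated partition, it suffices to show that $A\pi$ agrees with $\sigma$ in both invariants. The associated partition match follows from Theorem~\ref{thm:Main} applied to $\pi$: from $AP(\pi) = DC(\pi) = 4^m$ we read off that the depth of $\pi$ equals $4$ and $DC(A\pi) = 4^{m-1}$, giving $AP(A\pi) = 4^{m-1} = AP(\sigma)$.

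For the infinitesimal character, Proposition~\ref{prop:CassOS} says that $\xi_{A\pi} = \xi_\pi^{-} + 1/2$ for some sub-multiset $\xi_\pi^{-} \subset \xi_\pi$ obtained by removing four entries. Using the identity $S(m-1, z) - 1/2 = S(m, z) \setminus \{z + (m-1)/2\}$, one computes directly that $\xi_\sigma = (\xi_\pi \setminus R_{\mathrm{top}}) + 1/2$, where $R_{\mathrm{top}}$ is the multiset $\{\epsilon_1 k/2 + \epsilon_2 s + it + (m-1)/2 : \epsilon_1, \epsilon_2 \in \{\pm 1\}\}$ of the top entries of the four Speh segments $S(m, \epsilon_1 k/2 + \epsilon_2 s + it)$ comprising $\xi_\pi$. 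The task therefore reduces to showing that the four Casselman--Osborne removals coincide with $R_{\mathrm{top}}$.

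To pin this down we use that $\xi_{A\pi}$ is Hermitian symmetric (invariant under $z \mapsto -\bar z$) because $A\pi$ is unitary; equivalently $\xi_\pi^{-}$ is invariant under the involution $\iota \colon z \mapsto -\bar z - 1$. Any element of $\xi_\pi$ whose $\iota$-image lies outside $\xi_\pi$ is therefore forbidden in $\xi_\pi^{-}$. A direct check shows that the two tops with $\epsilon_1 = +1$ are $\iota$-singular for every $k$, while the two tops with $\epsilon_1 = -1$ are $\iota$-singular precisely when $k > m$. For $k > m$ the four forced removals thus fill all four slots; for $k < m$ the Speh segments overlap in $\xi_\pi$, producing multiplicities, and an elementary bookkeeping of the $\iota$-symmetry constraint on matched multiplicities forces exactly one further copy of each of the two remaining tops to be removed. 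In both cases $\xi_\pi^{-} = \xi_\pi \setminus R_{\mathrm{top}}$, so $\xi_{A\pi} = \xi_\sigma$, and Lemma~\ref{lem:CharOrbUn} yields $A\pi = \sigma$.

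The hypothesis $k \neq m$ is essential: when $k = m$ the Speh segments are disjoint yet only two tops remain $\iota$-singular, and one can exhibit a second genuine candidate (for instance $\psi(4(m-1), m-1, 1/2 - s; it)$) whose infinitesimal character arises from a different $\iota$-symmetric four-element removal, so Casselman--Osborne together with Hermitian symmetry no longer determine $A\pi$ uniquely. The main technical challenge of the argument is the $k < m$ case, where overlapping Speh segments create multiplicities in $\xi_\pi$ that must be carefully tracked in order to conclude, from the $\iota$-symmetry alone, that the two non-singular tops are also forced out.
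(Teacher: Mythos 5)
Your proposal is correct and follows essentially the same route as the paper: reduce to matching infinitesimal characters via Lemma \ref{lem:CharOrbUn} and Theorem \ref{thm:Main}, then use Proposition \ref{prop:CassOS} together with the Hermitian symmetry of the infinitesimal character of the unitary representation $A\pi$ to pin down the four deleted entries. The only difference is packaging: you phrase the uniqueness step as forced removals under the involution $z\mapsto -\bar z-1$ on $\xi_\pi$ (tracking multiplicities for $k<m$), whereas the paper shifts by $1/2$ and observes that the complementary multiset $C=\{\frac m2\pm\frac k2\pm s\}$ admits no nonempty symmetric subset when $k\neq m$ --- these are equivalent, and your version is, if anything, slightly more explicit about the overlap multiplicities.
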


\begin{proof}
Let $\pi=\psi\left(  4m,k,s;it\right)  ,\pi^{\prime}=\psi\left(  4\left(
m-1\right)  ,k,s;it\right)  $ and let $\xi,\xi^{\prime},\xi^{^{\prime\prime}}$
be the infinitesimal characters of $\pi,\pi^{\prime},A\pi$\ respectively. By
Theorem
\ref{thm:AVTadic}, $AP(\pi) = 4^m$ and $AP(\pi^{\prime})=4^{m-1}$. By Theorem \ref{thm:Main},
$AP(  A\pi) =4^{m-1}$ as well. Therefore
by Lemma \ref{lem:CharOrbUn} it suffices to show that $\xi^{\prime}
=\xi^{^{\prime\prime}}$.

For simplicity we assume $t=0$, the argument is the same if $t\neq0$. Now by
Lemma \ref{lem:infchar} we have%
\[
\xi=\bigsqcup S\left(  m,\pm\frac{k}{2}\pm s\right)  \text{ and }\xi^{\prime
}=\bigsqcup S\left(  m-1,\pm\frac{k}{2}\pm s\right)
\]

Now by \ref{prop:CassOS} $\xi^{^{\prime\prime}}$ is obtained from $B=\xi
+\frac{1}{2}$ by deleting $4$ elements. Indeed $\xi^{\prime}$ is obtained from
$B$ by deleting the following $4$ elements
\[
C=\{\frac{m}{2}\pm\frac{k}{2}\pm s\}.
\]
and we need to show that \emph{no }other infinitesimal character of a unitary
representation can be so obtained.

In fact we note that $\xi^{\prime}$ is the only symmetric submultiset of
$\xi+\frac{1}{2}$ with $\left\vert \xi^{\prime}\right\vert =4\left(
m-1\right)  $. Indeed any other symmetric subset of cardinality $|B|-4$ is
obtained from $B$ by replacing a symmetric subset of $\xi^{\prime}$ with a
symmetric subset of equal size contained in $C$, but if $k\neq m$ then $C$ has
no symmetric subsets.
\end{proof}

\begin{remark}
For $k=m$ the proof of Theorem \ref{thm:Apsi} fails, since in this case we
have%
\[
C=\{m\pm s,\pm s\}
\]
which admits the symmetric subset $\left\{  \pm s\right\}  .$ Indeed it is
easy to see that for $k=m$ the infinitesimal character of $\psi\left(
4m-4,m-1,\frac{1}{2}-s\right)  $ is also a subset of $\xi+\frac{1}{2}$. Therefore we may
conclude only that%
\[
A\psi\left(  4m,m,s\right)  =\psi\left(  4m-4,m,s\right)  \text{ or }%
\psi\left(  4m-4,m-1,\frac{1}{2}-s\right)
\]
An additional argument is required to rule out the latter possibility.
\end{remark}

\section{The complex case}

\label{sec:Complex}

Let us discuss the setting and the geometry of nilpotent orbits. We have
\[
G=G_{n}=\operatorname{GL}(n,{\mathbb{C}}),\quad{\mathfrak{g}}\approx
{\mathfrak{gl}}(n,{\mathbb{C}})\oplus{\mathfrak{gl}}(n,{\mathbb{C}}%
),\quad{\mathfrak{g}}_{\R}={\mathfrak{gl}}(n,{\mathbb{C}})\oplus0,\quad
{\mathfrak{k}}=\{(x,x)\},\quad{\mathfrak{k}^{\bot}}=\{(x,-x)\},
\]
where $\fk$ denotes the complexified Lie algebra of maximal
compact subgroup. The nilpotent orbits are parameterized by pairs
of partitions and have the form
${\mathcal{O}}_{{\lambda}}\times{\mathcal{O}}_{\mu}$. However,
associated orbits of Harish-Chandra modules intersect
$\mathfrak{k}^{\bot}$ and are therefore of the form
${\mathcal{O}}_{{\lambda}}\times{\mathcal{O}}_{{\lambda }}$ and so
are still parameterized by single partitions, rather than two
partitions. Standard parabolic subgroups are also parameterized by
single partitions. The degenerate Whittaker functionals are
defined in the same way as in \S\S\S \ref{subsec:genWhitFunc}:
using $J_{{\alpha}}\in{\mathfrak{g}}_{\R}$. Therefore they are
also parameterized by single partitions. Therefore the formulation
of Theorem \ref{thm:Main}
in the complex case stays the same. The proof of
Theorem \ref{thm:Main} is obtained from the proof in \S
\ref{sec:Der} by replacing the term ``coadjoint nilpotent orbit"
by ``coadjoint nilpotent orbit that intersects
$\mathfrak{k}^{\bot}$" and doubling all the expressions for the
dimensions of such orbits.

The Vogan classification of $\widehat{GL(n,{\mathbb{C}})}$ is simpler: each
$\pi\in\widehat{GL(n,{\mathbb{C}})}$ is a product of characters, where the
non-unitary characters come in pairs. As shown in \cite{Sahi-Kirillov,
Sahi-PAMS}, the adduced representation of $\pi$ is a product of the same form,
where each character of $G_{n}$ is restricted to $G_{n-1}$, and characters of
$G_{1}$ are thrown away. The associated partition of $\pi$ is determined in the
obvious way, similar to Theorem \ref{thm:AVTadic}.


\section{The $p$-adic case}
\label{sec:padic_short}

In this section we fix $F$ to be a non-Archimedean local field of characteristic zero and let $G_n:=GL(n,F)$.
%
%
%
%


\subsection{Definition of derivatives}

In the $p$-adic case, there is an additional definition of highest derivative,
using co-invariants. This definition works for all smooth admissible
representations of $G_{n}$, not only unitarizable. Moreover, Bernstein and
Zelevinsky define in \cite[\S 3]{BZ-Induced} all derivatives and not only the
highest ones in the following way.

Recall that $P_{n} = G_{n-1} \ltimes F^{n-1}$. Fix a non-trivial additive
character $\chi$ of $F$ and define a character $\theta_{n}$ of $F^{n-1}$ by
applying $\chi$ to the last coordinate. Denote by $\nu$ the determinant character $\nu(g)=|\det(g)|$.

Define two normalized coinvariants functors $\Psi^{-}\colon Rep(P_{n}) \to Rep(G_{n-1})$ and $\Phi^{-}\colon Rep(P_{n}) \to Rep(P_{n-1})$ by
$$
 \Psi^{-}(\tau) :=
\nu^{-1/2}\tau_{F^{n-1},1} \text{ and } \Phi^{-}(\tau) :=\nu^{-1/2} \tau_{F^{n-1},\theta_{n}}.
$$
Both functors are exact.

For a smooth representation $\tau$ of $P_{n}$ they define $\tau^{(k)}%
:=\Psi^{-}(\Phi^{-})^{k-1}\tau$ and call it the $k$-th derivative of $\tau$.
For a smooth representation $\pi$ of $G_{n}$, \emph{$k$-th derivative} is defined by
$D^{k}\pi:=\pi^{(k)}:=(\pi|_{P_{n}})^{(k)}$.

If $D^{k}\pi>0$ but $D^{k+l}\pi=0$ for any $l>0$ then $D^{k}\pi$ is called the
\emph{highest derivative} of $\pi$ and we denote it by $A(\pi)$, and $k$ is called the
\emph{depth} of $\pi$ and denoted $d(\pi)$.


For unitarizable representations, one can also define a \emph{shifted highest
derivative} of $\pi$ using Mackey theory, in the same way as adduced representation is defined in the Archimedean case (see
\S \S \ref{subsec:DerDepth}). By \cite{Ber}, this shifted highest derivative
will be isomorphic to $\nu^{1/2}A(\pi)$.


For a composition ${\alpha}=({\alpha}_{1},...,{\alpha}_{k})$ we define
$D^{{\alpha}}(\pi):=D^{{\alpha}_{1}}...D^{{\alpha}_{k}}(\pi).$ By
\cite[\S \S 8.3]{Zl} we have $(D^{{\alpha}_{k}}D^{{\alpha}_{k-1}}%
...D^{{\alpha}_{1}}\pi)^{*} \simeq Wh^{*}_{\alpha}(\pi)$.


Denote by $\mathcal{M}(G_{n})$ the category of smooth admissible representations of $G_n$ and define the Grothendieck ring $R = \bigoplus_{n} \Gamma(\mathcal{M}(G_{n}))$.
As an additive group it is the direct sum of Grothendieck groups of
$\mathcal{M}(G_{n})$ for all $n$, and the product is defined by parabolic
induction. Define the total derivative $D\colon R \to R$ as the sum of all
derivatives. By \cite[\S \S 4.5]{BZ-Induced}, $D$ is a homomorphism of rings.

\subsection{Zelevinsky classification}

In \cite{Zl}, Zelevinsky describes the generators of $R$ in the following way.
Denote by $C
:=\bigcup_{n} C_{n}$ the subset of all cuspidal irreducible representations of
$G_{n}$ for all $n$. For $\rho\in C_{d}$, a subset $\Delta\subset C_{d}$ of
the form $(\rho, \nu\rho, \nu^{2} \rho,...,\nu^{l-1}\rho)$ is called a
\emph{segment}. The representation $\nu^{(l-1)/2}\rho$ is called the center of
$\Delta$, the number $l$ is called the length of $\Delta$, and the number $d$
is called the depth of $\Delta$. We denote the set of all segments
$\Delta\subset C$ by $S$. We define a segment $\Delta^{-}$ by $\Delta^{-}
=(\rho, \nu\rho, \nu^{2} \rho,...,\nu^{l-2}\rho)$.

\begin{thm}
[\cite{Zl},\S 3 and \S \S 7.5]Let $\Delta=(\rho,\nu\rho,\nu^{2}\rho
,...,\nu^{l-1}\rho)\subset C_{d}$ be a segment. Then the representation
$\rho\times\nu\rho\times...\nu^{l-1}\rho$ contains a unique irreducible
constituent $\langle\Delta\rangle$ of the depth $d=depth(\Delta)$. Moreover,

\begin{enumerate}
\item $D(\langle\Delta\rangle)=\langle\Delta\rangle+\langle\Delta^{\prime
}\rangle$.

\item $R$ is a polynomial ring in indeterminates $\left\{  \langle
\Delta\rangle \colon \Delta\in S\right\}  $
\end{enumerate}
\end{thm}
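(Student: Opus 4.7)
The plan is to follow Zelevinsky's original strategy from \cite[\S 3, \S\S 7.5]{Zl}, resting on two cornerstones: the multiplicativity of the total derivative $D$ on $R$ (the ring homomorphism property established in \cite[\S\S 4.5]{BZ-Induced} via the geometric lemma), and the trivial derivative structure of cuspidal representations. Since $\rho \in C_d$ admits no nontrivial proper Jacquet restrictions, one has $D^k\rho = 0$ for $0 < k < d$ and $D^d\rho = \mathbf{1}$, so $D\rho = \rho + \mathbf{1}$ in $R$. Applying multiplicativity yields $D(\pi_l) = \prod_{i=0}^{l-1}(\nu^i\rho + \mathbf{1})$, where $\pi_l := \rho \times \nu\rho \times \cdots \times \nu^{l-1}\rho$. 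Expanding, the summand indexed by $I \subseteq \{0,\ldots,l-1\}$ has depth $d(l-|I|)$, so the depth-$d$ contributions come precisely from the $l$ subproducts obtained by omitting a single factor $\nu^{i}\rho$.

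For the existence and uniqueness of $\langle\Delta\rangle$, I would induct on $l$. The base $l = 1$ is trivial: set $\langle\Delta\rangle := \rho$. For the inductive step, each irreducible constituent $\sigma$ of $\pi_l$ contributes $D(\sigma)$ to $D(\pi_l)$, and the total multiplicity of depth-$d$ pieces equals $l$ by the calculation above. Among these $l$ depth-$d$ terms, the subproduct $\nu\rho \times \cdots \times \nu^{l-1}\rho$ (the $I = \{1,\ldots,l-1\}$ summand) contains, by the inductive hypothesis applied to the shifted segment, the distinguished constituent of depth $d$ associated to that shorter segment. Using the Bernstein--Zelevinsky linkage principle of \cite[\S 4]{Zl}, which controls which segments can occur together in the Jacquet module of a parabolic induction, one shows that exactly one irreducible constituent of $\pi_l$---call it $\langle\Delta\rangle$---has its depth-$d$ derivative containing $\langle\Delta^-\rangle$. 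A multiplicity comparison then forces $D^d\langle\Delta\rangle = \langle\Delta^-\rangle$ and $D^k\langle\Delta\rangle = 0$ for $0 < k < d$, giving assertion (1).

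For the polynomial ring structure in (2), I would treat generation and independence separately. Generation reduces to showing that every irreducible admissible representation of $G_n$ is a subquotient of some product $\langle\Delta_1\rangle \times \cdots \times \langle\Delta_r\rangle$; this is the multisegment classification, obtained by iterating parabolic induction from cuspidal data and applying (1) to track the distinguished subquotients at each step. Algebraic independence follows from a bidegree/filtration argument on $R$, indexed by $n$ and by total depth: the identity $D\langle\Delta\rangle - \langle\Delta\rangle = \langle\Delta^-\rangle$ lets one recover $\Delta$ from $\langle\Delta\rangle$ recursively, and an inductive argument on the multiset of segments separates distinct monomials.

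The main obstacle is the multiplicity-one statement in the inductive step: proving that $\pi_l$ has \emph{exactly one} constituent of depth $d$, and that its full depth-$d$ derivative is precisely $\langle\Delta^-\rangle$ (with no stray contributions at intermediate depths). This requires combining multiplicativity of $D$ with a careful linkage analysis of the Jacquet module of $\pi_l$, which is the technical heart of \cite[\S\S 4--7]{Zl} and is not reproduced here since the argument is purely combinatorial and independent of the Archimedean methods developed in earlier sections.
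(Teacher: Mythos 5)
The first thing to say is that the paper does not prove this statement at all: it is an imported result, stated with the attribution \cite{Zl} (\S 3 and \S\S 7.5), and the citation \emph{is} the proof. So the relevant comparison is not with an argument in the paper but with Zelevinsky's own, and measured as a standalone proof your proposal has a genuine gap exactly where you flag it. The step ``a multiplicity comparison then forces $D^d\langle\Delta\rangle=\langle\Delta^-\rangle$ and $D^k\langle\Delta\rangle=0$ for $0<k<d$'' is not a valid deduction from the computation $D(\pi_l)=\prod_{i=0}^{l-1}(\nu^i\rho+\mathbf{1})$. Knowing that the order-$d$ part of $D(\pi_l)$ consists of $l$ induced summands does not bound the number of depth-$d$ constituents of $\pi_l$, because constituents of depth strictly greater than $d$ also have nonzero $d$-th derivative and contribute to that same order-$d$ part: already for $d=1$, $l=2$ the Steinberg constituent of $\rho\times\nu\rho$ has nonzero first \emph{and} second derivatives, so both constituents feed the order-$1$ term. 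Separating these contributions, i.e.\ computing the derivatives of \emph{all} constituents (or at least enough of the Jacquet-module combinatorics to isolate $\langle\Delta\rangle$), is precisely the content of Zelevinsky's \S 3--\S 7 that you defer; the same holds for the algebraic independence in part (2), which in \cite{Zl} (7.5) rests on the derivative formula (1) plus an ordering argument and is here only asserted via an unspecified ``bidegree/filtration argument.'' So the heart of both assertions is left uncited-but-unproved, which is a gap if the goal was a proof rather than a citation.

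There is also a notational conflation worth fixing: in the inductive step you single out the summand $\nu\rho\times\cdots\times\nu^{l-1}\rho$, i.e.\ the segment ${}^-\Delta=(\nu\rho,\ldots,\nu^{l-1}\rho)$, but state the conclusion in terms of $\Delta^-=(\rho,\ldots,\nu^{l-2}\rho)$. These are different segments (unless $l=1$), and which of the two appears as the highest derivative of $\langle\Delta\rangle$ depends on the choice of mirabolic character and of $\langle\Delta\rangle$ as submodule versus quotient; an argument that silently switches between them cannot pin down statement (1) as written. In short: either do as the paper does and simply cite \cite{Zl}, or, if you want an actual proof, you must supply the uniqueness/derivative computation of \cite{Zl} Proposition 3.5 and the independence argument of 7.5 --- the multiplicativity of $D$ and the cuspidal case, which is all your sketch really establishes, are the easy preliminaries.
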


Zelevinsky furthermore describes all irreducible representations in terms of
segments and shows, in \cite[\S \S 8.1]{Zl}, that $A$ maps irreducible
representations to irreducible.

\subsection{The wave front set}

Let $\pi\in\mathcal{M}(G_{n})$. Let $\chi_{\pi}$ be the character of $\pi$.
Then $\chi_{\pi}$ defines a distribution $\xi_{\pi}$ on a neighborhood of zero in ${\mathfrak{g}_{n}}$,
by restriction to a neighborhood of $1\in G$ and applying logarithm.
This distribution is known to be a combination of Fourier transforms of
Haar measures of nilpotent coadjoint orbits (\cite[p. 180]{HCWF}). This
enables to define a wave front cycle, $WFC(\pi)$ as a linear combination of orbits. Clearly, $WFC$
is additive on $R$. In \cite[II.1]{MW} it is shown that it is also
multiplicative, in the sense of Corollary \ref{cor:BarBoz}. The wave front set, $WF(\pi)$, is defined to be the set of orbits that appear in $WFC(\pi)$ with non-zero coefficients. Denote by $WFmax(\pi)$ the set of maximal elements of $WF(\pi)$ (with respect to the Bruhat ordering).
In \cite[II.2]%
{MW}, $WFC(\langle\Delta\rangle)$ is computed to be the orbit given by the
partition $\lambda_{\Delta}$ that has $length(\Delta)$ parts of size
$depth(\Delta)$, with multiplicity 1.
Thus, for an arbitrary smooth irreducible representation $\pi$ of $G_n$, $WFmax(\pi)$ consists of a single nilpotent orbit, given by the partition defined by the Zelevinsky classification (see \cite{OS2}).

\subsection{Proof of Theorem \ref{thm:UniformMain} over $F$}

Since both $D$ and in a sense $WFC$ are homomorphisms, and since segment
representations $\langle\Delta\rangle$ generate $R$, it is enough to prove
Theorem \ref{thm:UniformMain} for them. The above information implies that
$Wh_{\alpha}(\langle\Delta\rangle) = D^{{\alpha}}(\langle
\Delta\rangle) = 0$ unless ${\alpha} = \lambda_{\Delta}$. Now, $Wh_{\lambda
_{\Delta}} (\langle\Delta\rangle) = D^{\lambda_{\Delta}}(\Delta)=\langle
\emptyset\rangle= {\mathbb{C}}$. Since $WFC(\langle\Delta\rangle) =
{\mathcal{O}}_{\lambda_{\Delta}}$, this completes the proof of the main three parts
of Theorem \ref{thm:UniformMain}.

We can prove the ``moreover" part through an analog of Corollary
\ref{cor:SmallRep}: there is a Tadic classification of the unitary
representations as products of certain building blocks (see \cite{Tad}), from
which we see that $rank(\pi) =k< n/2$ if and only if $\pi= \chi\times\tau$,
for some character $\chi$ of $G_{n-k}$ and some $\tau\in\mathcal{M}(G_{k})$.
Since the same holds for Howe rank (by \cite[Part II, Corollary 3.2]{Sca}),
the ``moreover" part follows. One can also prove the ``moreover" part directly,
as was done in \cite[II.3]{MW} for the symplectic group.

\end{document}